\def\hsmash{\relax 
  \ifmmode\def\next{\mathpalette\mathhsm@sh}\else\let\next\makehsm@sh
  \fi\next}
\def\makehsm@sh#1{\setbox\z@\hbox{#1}\finhsm@sh}
\def\mathhsm@sh#1#2{\setbox\z@\hbox{$\m@th#1{#2}$}\finhsm@sh}
\def\finhsm@sh{\wd\z@\z@ \box\z@}
\newtheorem{fac}{Fact}[section]
\newtheorem{lem}[fac]{Lemma}
\newtheorem{prop}[fac]{Proposition}
\newtheorem{theo}[fac]{Theorem}
\theoremstyle{definition}
\newtheorem{defi}[fac]{Definition}
\theoremstyle{remark}
\newtheorem{rem}[fac]{Remark}
\def\cyr#1{\foreignlanguage{russian}{#1}}
\newcommand{\Br}{\mathop{\text{\rm Br}}\nolimits}
\newcommand{\ev}{\mathop{\text{\rm ev}}\nolimits}
\newcommand{\bbP}{{\mathbbm P}}
\newcommand{\bbQ}{{\mathbbm Q}}
\newcommand{\bbR}{{\mathbbm R}}
\newcommand{\bbZ}{{\mathbbm Z}}
\newcommand{\A}{{\mathbbm A}}
\newcommand{\F}{{\mathbbm F}}
\newcommand{\N}{{\mathbbm N}}
\newcommand{\Q}{{\mathbbm Q}}
\newcommand{\R}{{\mathbbm R}}
\newcommand{\Z}{{\mathbbm Z}}
\newcommand{\calH}{{\mathscr{H}}}
\newcommand{\calJ}{{\mathscr{J}}}
\newcommand{\calK}{{\mathscr{K}}}
\newcommand{\calQ}{{\mathscr{Q}}}
\newcommand{\calR}{{\mathscr{R}}}
\newcommand{\calS}{{\mathscr{S}}}
\newcommand{\calT}{{\mathscr{T}}}
\newcommand{\bfd}{\mathbf{d}}
\newcommand{\bfk}{\mathbf{k}}
\newcommand{\bfl}{\mathbf{l}}
\newcommand{\bfq}{\mathbf{q}}
\newcommand{\bft}{\mathbf{t}}
\newcommand{\bfQ}{\underline{Q}}
\newcommand{\gam}{{\gamma}}
\newcommand{\del}{{\delta}}
\newcommand{\eps}{{\varepsilon}}
\newcommand{\lam}{{\lambda}}
\newcommand{\sig}{{\sigma}}
\def\alp{{\alpha}} 
\def\bet{{\beta}}  
\def\gam{{\gamma}} 
\def\Gam{{\Gamma}}
\def\del{{\delta}} 
\def\zet{{\zeta}}  
\def\tet{{\theta}}  
\def\lam{{\lambda}}  
\def\lamtil{{\widetilde{\lam}}}
\def\Sig{{\Sigma}}
\newcommand{\loc}{{{\text{\rm loc}}}}
\newcommand{\vol}{{{\text{\rm vol}}}}
\newcommand{\RBR}{{{R_{\text{Br}}}}}
\newcommand{\ra}{{{r_1}}}
\newcommand{\mmod}[1]{\,\,\text{mod}\,\,#1}
\newcounter{abc}
\newcounter{iii}
\def\rightend#1#2{{%
 \leavevmode\nobreak\hskip .5em plus 1fil
 \penalty600 \hskip 0pt plus -1filll
 \vadjust{}\nobreak\hskip 0pt plus 1filll%
 #1\parfillskip=#2\relax \par}}
\def\eop{\ifmmode\rule[-22pt]{0pt}{1pt}\ifinner\tag*{$\square$}\else\eqno{\square}\fi\else\rightend{$\square$}{0pt}\fi}
\title[Del Pezzo surfaces of degree four violating the Hasse principle]{On the number of certain Del Pezzo surfaces \\of degree~four violating the Hasse principle}
\begin{document}

\author{J\"org Jahnel}

\address{D\'epartement \!Mathematik\\ Universit\"at \!Siegen\\ Walter-Flex-Stra\ss e~3\\ D-57068 Siegen\\ Germany}
\email{jahnel@mathematik.uni-siegen.de}
\urladdr{http://www.uni-math.gwdg.de/jahnel}

\author{Damaris Schindler}

\address{Hausdorff Center for Mathematics\\ Endenicher Allee 62\\ D-53115 Bonn\\
Germany}
\email{damaris.schindler@hausdorff-center.uni-bonn.de}
\urladdr{http://www.math.uni-bonn.de/people/dschindl}


\date{\today}

\keywords{Del Pezzo surface, Hasse principle, Brauer-Manin obstruction}

\subjclass[2010]{Primary 11G35; Secondary 14G25, 14G05, 14J26, 14J10}

\begin{abstract}
We give an asymptotic expansion for the density of del Pezzo surfaces of degree four in a certain Birch Swinnerton-Dyer family violating the Hasse principle due to a Brauer-Manin obstruction. Under the assumption of Schinzel's hypothesis and the finiteness of Tate-Shafarevich groups for elliptic curves, we obtain an asymptotic formula for the number of all del Pezzo surfaces in the family, which violate the Hasse principle.
\end{abstract}

\maketitle

\section{Introduction}
The goal of this paper is to establish an asymptotic formula for the density of del Pezzo surfaces of degree four in a certain family of Birch Swinnerton-Dyer type, which violate the Hasse principle due to a Brauer-Manin obstruction. More precisely, let $D\in \bbZ$ be some fixed discriminant, which is not a perfect square, and $A,B\in \bbZ$. Let $S^{(D;A,B)}$ be the surface in $\bbP^4$ given by the system of quadrics
\begin{equation}\label{eqn0}
\begin{split}
t_0t_1&=t_2^2-Dt_3^2,\\
(t_0+At_1)(t_0+Bt_1)&=t_2^2-Dt_4^2.
\end{split}
\end{equation}
If $A$ and $B$ are chosen in a way such that $A\neq B$, $AB\neq 0$ and $A^2-2AB+B^2-2A-2B+1\neq 0$, then $S^{(D;A,B)}$ is a smooth del Pezzo surface of degree four. We are interested in the frequency how often the surface $S^{(D;A,B)}$ fails the Hasse principle. In order to formulate a reasonable counting question, we need to introduce some height function, according to which we order the del Pezzo surfaces in the family above. For a fixed discriminant $D$, we use a naive height given by
\begin{equation*}
H(S^{(D;A,B)}):=\max\{|A|,|B|\}.
\end{equation*}
Let $R_D(N)$ be the number of integers $|A|,|B|\leq N$ such that $S^{(D;A,B)}$ is smooth and fails the Hasse principle. We can hence characterize a tuple $(A,B)$ with $|A|,|B|\leq N$, which is counted by $R_D(N)$, by the property that $S^{(D;A,B)}$ is smooth, $S^{(D;A,B)}(\bbQ_p)\neq \emptyset $ for all primes $p$ including the infinite prime, and such that $S^{(D;A,B)}(\bbQ)=\emptyset$. It is conjectured that all failures of the Hasse principle for del Pezzo surfaces in general can be explained by some Brauer-Manin obstruction. Hence we introduce the counting function $\RBR (N)$ to be the number of all surfaces $S^{(D;A,B)}$ in the family (\ref{eqn0}) of height at most $N$ with the property that there is a Brauer-Manin obstruction to the Hasse principle for $S^{(D;A,B)}$. In particular, we have the lower bound $R_D(N) \geq \RBR (N)$. Our first main theorem gives an asymptotic expansion for $\RBR (N)$. 

\begin{theo}\label{theo1}
Let $D>1$ be some positive squarefree integer, which satisfies $D\equiv 1$ modulo $8$. For any $P\geq 0$, there are real constants $C_k$ such that
\begin{equation*}
\RBR (N)=\frac{4N^2}{(\log 2N)^{1/4}} \sum_{k=0}^{2P} \frac{C_k}{(\log 2N)^{k/2}} +O_{D,P} \left(\frac{N^2}{(\log N)^{3/4+P}}\right).
\end{equation*}
The constants $C_k$ have explicit descriptions as in equation (\ref{defCk}) and (\ref{defC0}).
Moreover, the leading constant $C_0$ is positive.
\end{theo}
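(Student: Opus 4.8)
The plan is to reduce the statement to an explicit count of pairs $(A,B)$ with $|A|,|B|\leq N$ for which there is a Brauer--Manin obstruction, by analyzing the structure of the Brauer group of $S^{(D;A,B)}$. For a del Pezzo surface of degree four given by an intersection of two quadrics, the quotient $\Br(S)/\Br(\bbQ)$ is a finite $2$-torsion group, and the relevant obstructing classes are the quaternion classes built out of the linear forms $t_0$, $t_1$, $t_0+At_1$, $t_0+Bt_1$ together with the norm form $t_2^2-Dt_3^2$ of the quadratic field $\bbQ(\sqrt D)$. First I would pin down, in terms of congruence and divisibility conditions on $A$ and $B$ (and the splitting behaviour of primes in $\bbQ(\sqrt D)$), precisely when a given Brauer class is everywhere locally constant yet nonconstant globally --- i.e.\ when it actually obstructs the Hasse principle. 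Since $D\equiv 1\pmod 8$, the prime $2$ splits in $\bbQ(\sqrt D)$, which kills the archimedean and $2$-adic local conditions and makes the surviving conditions a matter of the prime factorizations of $A$, $B$, and of a few related quantities lying in prescribed Legendre-symbol classes modulo $D$.

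Next I would translate the counting of such pairs into a sum over integers with constrained prime factorizations of the shape $\sum_{|A|,|B|\le N} \mathbbm{1}[\,\text{all primes } p\mid (\text{relevant quantities}) \text{ split (or are inert) in } \bbQ(\sqrt D), \text{ plus finitely many congruences}\,]$. The characteristic function of ``all prime factors split in $\bbQ(\sqrt D)$'' is, up to the finite set of ramified primes, a multiplicative function whose Dirichlet series is comparable to $L(s,\chi_D)^{1/2}$ times an Euler product that is holomorphic and nonzero for $\Re s > 1/2$; this is exactly where the factor $(\log 2N)^{-1/4}$ and the half-integer powers of the logarithm come from. Concretely, I would apply a Selberg--Delange type asymptotic (or the Landau--Selberg--Delange method, as in Tenenbaum's book) to each of the one-variable sums, carry out the expansion to order $P$, and multiply out the resulting two one-variable asymptotics to get the $N^2 (\log 2N)^{-1/4}\sum_k C_k (\log 2N)^{-k/2}$ main term together with the claimed error term $O_{D,P}(N^2(\log N)^{-3/4-P})$. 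The constants $C_k$ then acquire their explicit form from the Taylor coefficients, at $s=1$, of the relevant Euler products divided by the appropriate power of $(s-1)$, which is what equations~(\ref{defCk}) and~(\ref{defC0}) record.

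The main obstacle, and the part requiring genuine care rather than routine bookkeeping, is the Brauer-group analysis together with the local solubility conditions: one must show that for the pairs being counted the surface really does have points in every $\bbQ_p$ (so that the obstruction is a genuine failure of the Hasse principle and not merely a local one), that the obstructing Brauer class is genuinely nonconstant in $\Br(S)/\Br(\bbQ)$, and that the resulting arithmetic conditions on $(A,B)$ separate cleanly into a multiplicative ``splitting'' condition plus a bounded set of congruence conditions with no hidden correlations between the two variables. Once that reduction is in place, the analytic input is a standard, if somewhat intricate, application of the Selberg--Delange method; the positivity of $C_0$ follows because $C_0$ is a convergent product of local densities each of which is manifestly positive (the Euler factors are bounded away from zero and the $\Gamma$-factor from the $(s-1)^{-1/4}$ singularity contributes a positive constant $1/\Gamma(3/4)$), so no cancellation can occur.
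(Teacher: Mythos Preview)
Your high-level outline has the right analytic endpoint (Selberg--Delange, a singularity of order $3/4$ at $s=1$, positivity via $1/\Gamma(3/4)$), but the reduction step you describe does not match the actual arithmetic of the problem and would not go through as written.

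The decisive issue is your claim that ``the resulting arithmetic conditions on $(A,B)$ separate cleanly into a multiplicative `splitting' condition plus a bounded set of congruence conditions with no hidden correlations between the two variables''. This is false here. Lemma~\ref{inert} shows that constancy of the Brauer class at an inert prime $p$ depends jointly on $\nu_p(A)$, $\nu_p(B)$ and $\nu_p(A-B)$, and on whether $B$ (or $BD$) is a square in $\bbQ_p$; these conditions do not factor as a product of a condition on $A$ alone and a condition on $B$ alone. Consequently there are no ``two one-variable asymptotics'' to multiply. The paper's route is to substitute $n=A-B$, fix $n$, and count the $B$ that are \emph{admissible} for $n$ (Definition~\ref{admissible}, Lemma~\ref{lem2b}); this inner count is approximated by $(2N-|n|+1)T^{-1}\sig(n)$ for a carefully built multiplicative function $\sig$ (Lemma~\ref{lem4.4}), and only then does Selberg--Delange enter, applied to the single sum $\sum_n \sig(n)$. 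The factorisation in Lemma~\ref{lem4} is $D_\chi(s)^4 = L(s,\chi)^3 L(s,(\tfrac{\cdot}{D})\chi)H^{(3)}(s)$, giving a $\zeta(s)^{3/4}$-type singularity; your heuristic ``comparable to $L(s,\chi_D)^{1/2}$'' does not produce the correct exponent by itself.

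A second gap is the positivity argument. The leading constant $C_0$ in (\ref{defC0}) is \emph{not} a convergent product of local densities; it is a sum over $\bfl\in\bbN^r$ and over $(a,b)\in\calH(\bfl)$, where $\calH(\bfl)$ encodes local solubility and constancy-with-odd-parity of the Brauer evaluation at the ramified primes. Showing $C_0>0$ therefore requires exhibiting at least one $\bfl$ with $\calH(\bfl)\neq\emptyset$, which the paper does by an explicit construction (Lemmas~\ref{lem6.7}, \ref{lem6.8}, \ref{Cpos}); it is not automatic from ``Euler factors bounded away from zero''. Likewise, the ramified primes cannot simply be absorbed into a finite correction: their contribution is handled via the truncation-and-Hensel argument of Lemma~\ref{lem1}, which is what produces the sets $\calH(\bfl)$ and the sum over $\bfl$ in the first place.
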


We note that the implied constant in the error term depends ineffectively on $P$ due to possible Siegel zeros of Dirichlet $L$-functions.\\

Moreover, we note that we can always reduce to the case where $D$ is squarefree by substituting $t_3=d^{-1} t_3'$ and $t_4=d^{-1} t_4'$ if $d^2|D$ for some positive integer $d$.\\

In \cite{VAV}, V\'arilly-Alvarado and Viray have shown that the Brauer-Manin obstruction to the Hasse principle (and weak approximation) is the only one for the family (\ref{eqn0}) under the assumption of Schinzel's hypothesis and the finiteness of Tate-Shafarevich groups of elliptic curves. Under these two conjectures, we hence conclude that we also obtain an asymptotic expansion for $ R_D(N)$. 

\begin{theo}\label{theo2}
Let $D$ be as in Theorem \ref{theo1} and $P\geq 0$. Assume Schinzel's hypothesis and the finiteness of Tate-Shafarevich groups of elliptic curves. Then 
\begin{equation*}
R_D(N)= \frac{4N^2}{(\log 2N)^{1/4}} \sum_{k=0}^{2P} \frac{C_k}{(\log 2N)^{k/2}} +O_{D,P} \left(\frac{N^2}{(\log N)^{3/4+P}}\right),
\end{equation*}
with real constants $C_k$ given as in Theorem \ref{theo1}.
\end{theo}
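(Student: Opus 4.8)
The plan is to deduce Theorem~\ref{theo2} from Theorem~\ref{theo1} almost immediately, once the relevant results of V\'arilly-Alvarado and Viray are invoked. Recall that $R_D(N)$ counts the smooth surfaces $S^{(D;A,B)}$ of height at most $N$ for which $S^{(D;A,B)}(\bbQ_p)\neq\emptyset$ for all places $p$ (including $p=\infty$) but $S^{(D;A,B)}(\bbQ)=\emptyset$, whereas $\RBR(N)$ counts those smooth $S^{(D;A,B)}$ of height at most $N$ for which there is a Brauer-Manin obstruction to the Hasse principle. The first observation is the unconditional inclusion: a Brauer-Manin obstruction forces $S^{(D;A,B)}(\bbA_\bbQ)^{\Br}=\emptyset$ while $S^{(D;A,B)}(\bbA_\bbQ)\neq\emptyset$, hence in particular $S^{(D;A,B)}(\bbQ)=\emptyset$ with points everywhere locally; this gives $R_D(N)\geq\RBR(N)$ as already noted in the introduction.

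The reverse inequality is exactly where the two conjectures enter. By \cite{VAV}, under Schinzel's hypothesis and the finiteness of Tate-Shafarevich groups of elliptic curves, the Brauer-Manin obstruction is the only obstruction to the Hasse principle for every surface in the family~(\ref{eqn0}). Concretely, for each smooth $S=S^{(D;A,B)}$ this means: if $S(\bbA_\bbQ)^{\Br}\neq\emptyset$ then $S(\bbQ)\neq\emptyset$. Contrapositively, if $S$ is smooth, has points in every $\bbQ_p$, and has no rational point, then necessarily $S(\bbA_\bbQ)^{\Br}=\emptyset$, i.e. there is a Brauer-Manin obstruction. Thus every tuple $(A,B)$ counted by $R_D(N)$ is also counted by $\RBR(N)$, giving $R_D(N)\leq\RBR(N)$ under the two conjectures. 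Combining with the unconditional lower bound yields $R_D(N)=\RBR(N)$ for all $N$.

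It then remains only to substitute this identity into the conclusion of Theorem~\ref{theo1}: for $D>1$ squarefree with $D\equiv 1\pmod 8$ and any $P\geq 0$ we have
\begin{equation*}
R_D(N)=\RBR(N)=\frac{4N^2}{(\log 2N)^{1/4}}\sum_{k=0}^{2P}\frac{C_k}{(\log 2N)^{k/2}}+O_{D,P}\!\left(\frac{N^2}{(\log N)^{3/4+P}}\right),
\end{equation*}
with the same constants $C_k$ (described in (\ref{defCk}) and (\ref{defC0})) and the same error term, the implied constant again depending ineffectively on $P$ through possible Siegel zeros. Since the hypotheses on $D$ in Theorem~\ref{theo2} are stated to be identical to those in Theorem~\ref{theo1}, no further arithmetic input is needed.

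I do not expect any genuine obstacle here: the entire content of Theorem~\ref{theo2} is the citation to \cite{VAV} plus the trivial direction of the Brauer-Manin inequality, all of the analytic difficulty having been absorbed into Theorem~\ref{theo1}. The only point requiring minor care is to confirm that the result of \cite{VAV} applies uniformly to \emph{all} admissible $(A,B)$ in the family (in particular to every member of the Birch-Swinnerton-Dyer pencil cut out by~(\ref{eqn0})), so that the passage from $R_D(N)$ to $\RBR(N)$ is valid term by term rather than merely on average; this is indeed what is proved there.
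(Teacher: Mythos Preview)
Your proposal is correct and matches the paper's own argument: the paper likewise deduces Theorem~\ref{theo2} directly from Theorem~\ref{theo1} by invoking \cite{VAV} to conclude that, under Schinzel's hypothesis and finiteness of Tate--Shafarevich groups, the Brauer--Manin obstruction is the only one for the family~(\ref{eqn0}), so that $R_D(N)=\RBR(N)$. There is no additional content in the paper's proof beyond this citation and the unconditional inequality $R_D(N)\geq \RBR(N)$ already noted in the introduction.
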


\begin{rem}
Theorem \ref{theo1} can be used to conclude that the set of del Pezzo surfaces of degree four, that are counterexamples to the Hasse principle, is Zariski dense in the moduli scheme. Indeed, one can argue as in Theorem 6.11 in \cite{JS} and note that the density estimate for $\RBR (N)$ implies that the set of $(A,B)$, for which $S^{(D;A,B)}$ is a counterexample to the Hasse principle, cannot be contained in a finite union of curves in $\A^2$.
\end{rem}

Next we compare our result from Theorem \ref{theo1} with the number of del Pezzo surfaces in the family that are everywhere locally soluble. We let $R_D^{\loc}(N)$ be the number of $|A|,|B|\leq N$ such that $S^{(D;A,B)}(\bbQ_p)\neq \emptyset$ for all primes $p$, including the infinite prime. 

\begin{prop}\label{prop1}
Assume that $D$ is some positive, squarefree integer with $D\equiv 1$ modulo $8$. Then there exists a positive constant $c_\loc$ such that one has
\begin{equation*}
R_D^{\loc}(N)=c_\loc N^2+O(N^{2-\tet(D)}),
\end{equation*}
for some $\tet(D) >0$. 
The constant $c_\loc$ has an explicit description in equation (\ref{cloc}) in section \ref{local}. In particular, it is a product of local densities.
\end{prop}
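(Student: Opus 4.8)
The plan is to establish Proposition \ref{prop1} by a standard sieve computation, expressing $R_D^{\loc}(N)$ as a sum over $(A,B)$ of the characteristic function of everywhere local solubility and recognizing that this characteristic function is, up to a negligible error, given by a finite product of local conditions. First I would note that the archimedean condition $S^{(D;A,B)}(\bbR)\neq\emptyset$ is easily seen to hold for all $(A,B)$ (for $D>0$ one can produce real points directly, e.g.\ with $t_1=0$), so only the $p$-adic conditions for finite primes matter. For a prime $p$, write $\delta_p(A,B)=1$ if $S^{(D;A,B)}(\bbQ_p)\neq\emptyset$ and $0$ otherwise; by the Lang--Weil estimates together with Hensel's lemma, for $p$ not dividing $2D$ and not dividing the discriminant of the pencil, smoothness of $S^{(D;A,B)}$ modulo $p$ forces $\delta_p(A,B)=1$, so the only primes that can impose a genuine constraint are those in a fixed finite set $S_0$ (the primes dividing $2D$) together with primes $p$ at which the reduction degenerates, and the latter is controlled by congruence conditions on $(A,B)$ modulo powers of $p$ with the density of ``bad'' $(A,B)$ decaying like $p^{-1}$ or faster. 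The upshot is that $\delta_p$ depends only on $(A,B)$ modulo a bounded power $p^{e_p}$ of $p$, and $\delta_p(A,B)=1$ for all but a density-$O(p^{-1-\eta})$ set of residues when $p\notin S_0$.

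Next I would set up the main term. Define the local density $c_p$ as the proportion of pairs $(A,B)\in(\bbZ_p/p^{e_p}\bbZ_p)^2$ (equivalently the normalized Haar measure of the set of $(A,B)\in\bbZ_p^2$) for which $S^{(D;A,B)}(\bbQ_p)\neq\emptyset$ and $S^{(D;A,B)}$ is smooth; set $c_\infty=1$. The candidate constant is
\begin{equation*}
c_\loc=c_\infty\prod_p c_p,
\end{equation*}
and I would verify the product converges absolutely because $c_p=1+O(p^{-1-\eta})$ for $p\notin S_0$, using the count of degenerate fibres of the pencil of quadrics and a uniform Lang--Weil/Hensel argument for the smooth fibres. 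To turn this into the asymptotic, I would first discard the non-smooth locus: the set of $(A,B)$ with $|A|,|B|\leq N$ for which $S^{(D;A,B)}$ is singular is contained in the zero set of the finitely many fixed polynomials $A-B$, $AB$, $A^2-2AB+B^2-2A-2B+1$, hence has cardinality $O(N)$, which is absorbed in the error term. Then I would write
\begin{equation*}
R_D^{\loc}(N)=\sum_{\substack{|A|,|B|\leq N}}\prod_p \delta_p(A,B)+O(N),
\end{equation*}
truncate the product at a parameter $z$, bound the tail contribution of primes $p>z$ using $\delta_p\leq 1$ and the density bound, and evaluate the truncated sum by the Chinese Remainder Theorem as $N^2\prod_{p\leq z}c_p+O(z^{C}N)$ for a suitable constant $C$ (the modulus being $\prod_{p\le z}p^{e_p}$, which is $\le \exp(O(z))$). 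Balancing $z$ as a small power of $N$ (or $z\asymp\log N$ if a power-saving in the tail is available via the $p^{-1-\eta}$ decay) yields $R_D^{\loc}(N)=c_\loc N^2+O(N^{2-\tet(D)})$ for some $\tet(D)>0$, the dependence on $D$ entering through the set $S_0$ and the exponents $e_p$.

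The positivity of $c_\loc$ reduces to showing $c_p>0$ for every prime $p$, i.e.\ that $S^{(D;A,B)}(\bbQ_p)\neq\emptyset$ for a positive-measure set of $(A,B)$; this follows by exhibiting, for each $p$, a smooth $p$-adic point for suitable $(A,B)$ and invoking Hensel's lemma, or more cheaply by noting that for $p\nmid 2D$ a smooth fibre over $\bbF_p$ always has an $\bbF_p$-point by Lang--Weil (the dimension-$2$ estimate gives $\#S(\bbF_p)=p^2+O(p^{3/2})>0$), and handling the finitely many $p\mid 2D$ by a direct construction. The main obstacle I anticipate is the bookkeeping for the bad primes: one must show that the density of $(A,B)$ mod $p^k$ failing local solubility genuinely decays fast enough in $p$, which requires analyzing how the pencil of quadrics \eqref{eqn0} degenerates --- in particular understanding the singular members of the pencil and the $p$-adic solubility of mildly singular quadric intersections --- uniformly in $p$. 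This is where one pays attention to the specific shape of the equations, and it is the step that determines the admissible exponent $\tet(D)$; everything else is routine sieve manipulation. I would point the reader to section \ref{local} for the explicit evaluation of the $c_p$ and the precise form \eqref{cloc} of $c_\loc$.
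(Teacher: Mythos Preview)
Your sieve framework is in the right spirit, but it differs from the paper's approach in a way that leaves a genuine gap for the claimed power-saving error term. The paper does \emph{not} sieve over all primes. Instead it invokes the key input (Lemma~\ref{local1}, quoting Proposition~4.3 of \cite{JS}) that for every unramified prime $p\neq 2$ one has $S^{(D;A,B)}(\bbQ_p)\neq\emptyset$ \emph{unconditionally}, i.e.\ $\delta_p(A,B)=1$ for all $(A,B)$, regardless of whether the reduction is smooth. Combined with the hypothesis $D\equiv 1\pmod 8$ (so $2$ is split and again always soluble), this means the only primes imposing any constraint are the finitely many $p\mid D$. There is then no tail to truncate, and the power saving $N^{2-\tet(D)}$ comes entirely from the analysis at these fixed primes.

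Your proposal, by contrast, only asserts $\delta_p=1$ for primes of good reduction and treats the remaining primes (those dividing the discriminant of the pencil, which depend on $(A,B)$) via a density bound $O(p^{-1-\eta})$. Two problems arise. First, you do not justify the exponent $1+\eta$: the set of $(A,B)$ with $p\mid AB(A-B)G(A,B)$ has density $\asymp p^{-1}$, and to improve this to $p^{-1-\eta}$ you would need to show that local solubility still holds for most such pairs---which is precisely the content of Lemma~\ref{local1} that you are not invoking. Second, even granting $O(p^{-1-\eta})$, your balancing fails: if $z\asymp\log N$ the tail $\sum_{p>z}p^{-1-\eta}\asymp z^{-\eta}$ gives only a logarithmic saving, while if $z$ is a small power of $N$ the modulus $\prod_{p\le z}p^{e_p}\ge\exp(cz)$ dwarfs $N$ and the CRT error swamps everything. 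A power-saving error genuinely requires knowing $c_p=1$ for all but finitely many $p$.

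There is also a secondary issue at the ramified primes: $\delta_p$ is \emph{not} determined by $(A,B)$ modulo a fixed power $p^{e_p}$. The paper's Lemmas~\ref{local2} and~\ref{local3} show that the relevant modulus is $p^{8l+1}$ where $l$ depends on $\nu_p(A),\nu_p(B),\nu_p(A-B),\nu_p(G(A,B))$, and these valuations can be arbitrarily large. The paper handles this by stratifying into the sets $\calR_l^*(p)$ and proving the resulting density sum converges absolutely with an explicit tail bound, which is where the exponent $\tet(D)$ actually comes from.
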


In section \ref{local}, we give an elementary proof of Proposition \ref{prop1}. Alternatively, one should be able to use the methods from the papers of Poonen and Stoll in \cite{PS99a} and \cite{PS99b} as used in Theorem 3.6 in work of Poonen and Voloch \cite{PV} or work of Ekedahl \cite{Eke}. However, it turns out that except for a finite number, all the local densities in our problem are identically equal to $1$ and hence we can pursue an easier proof. Moreover, we obtain an explicit error term with a power saving of the main parameter.\par

If $D$ is chosen suitably as in the assumptions of our main theorems, then Proposition~\ref{prop1} shows that a positive proportion of surfaces in the family~(\ref{eqn0}) is locally soluble. The family~(\ref{eqn0}) is built in a way that in the generic case one obtains a non-trivial Brauer-group isomorphic to~$\Z/2\Z$. However, in most cases one would only expect that weak approximation is obstructed, but the Hasse principle still holds. Theorem \ref{theo1} and Theorem \ref{theo2} verify this expectation for the Hasse principle in a quantitative way.\par

Similar questions for other families of algebraic varieties have been studied before. In \cite{Bha}, Bhargava considers families of genus one curves and shows among other results that a positive proportion of plane cubics fail the Hasse principle. In a similar spirit, Browning and Newton \cite{BroNew} study twists of norm one tori and find that a positive proportion of rational numbers fail the Hasse norm principle in the case of a non-trivial knot group. The situation for the degree four del Pezzo surfaces in our family is different in the sense that only on a thin subset one observes failure of the Hasse principle due to a Brauer-Manin obstruction. This phenomenon is closer to the observations of La Br\'eteche and Browning \cite{BB2} on the failure of the Hasse principle for a certain family of Ch\^{a}telet surfaces. Similarly to our situation, they find a positive proportion of locally soluble surfaces and only a thin set failing the Hasse principle, with a density decaying like $\smash{\sim \frac{1}{(\log N)^{1/4}}}$ as in our Theorems \ref{theo1} and \ref{theo2}. The study of a certain family of coflasque tori in \cite{BB1} shows a similar behaviour.\par


In order to count counterexamples to the Hasse principle in the family (\ref{eqn0}), we need to understand the Brauer group of the variety and its evaluation on the local points $S^{(D;A,B)}(\Q_\nu)$ for any place~$\nu$ sufficiently well. Our analysis in this direction, in particular criteria for the constancy of the evaluation of a Brauer group element on $S^{(D;A,B)}(\Q_\nu)$, builds on and generalizes part of our earlier work in \cite{JS}. For inert primes, we have a rather precise criterion (see Lemma \ref{inert}), whereas for ramified primes the situation remains to some extent unsolved. We circumvent the problem in using the continuity properties of Brauer classes, see Lemma \ref{lem1}.\par
Note that in our setting it is enough to consider algebraic Brauer classes. Since del Pezzo surfaces are rational varieties, their Brauer group is trivial after passing to some algebraic closure, see Remark 1.3.8 in \cite{Co} and III, Example 8.7.ii) in \cite{Ja} as well as Theorem 42.8 in \cite{Ma}. Hence, in the usual notation we have $\Br X= \Br_1 X= \ker [\Br X \rightarrow \Br \overline{X}]$.\par

The structure of this paper is as follows. In section \ref{local}, we study the number of locally soluble del Pezzo surfaces in our family (\ref{eqn0}) and prove Proposition \ref{prop1}. In section \ref{section3}, we study the action of the Brauer group at inert primes and give explicit criteria for its evaluation on $S^{(D;A,B)}(\Q_p)$. We use these criteria in section \ref{section4} to give asymptotics for counting functions related to $\RBR (N)$. First, we additionally fix $A$ and $B$ in congruence classes modulo some integer $T$ that is composed of primes dividing the discriminant $D$. We use these asymptotics in the final section to prove the main theorem \ref{theo1}.\par

We note that all implicit constants in Vinogradov's notation may depend on the discriminant~$D$.

\section{Local solubility}\label{local}
The goal of this section is to prove Proposition \ref{prop1}. We start by recalling a few results on local solubility obtained in \cite{JS}. 

\begin{lem}\label{local1}
Let $p\neq 2$ be some prime that is unramified in the field extension $\bbQ(\sqrt{D})$ and $A,B\in \bbZ$ such that $S^{(D;A,B)}$ is smooth. Then one has $S^{(D;A,B)}(\bbQ_p)\neq \emptyset$.
\end{lem}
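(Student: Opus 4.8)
The plan is to exhibit an explicit smooth $\bbQ_p$-point on $S^{(D;A,B)}$ by solving the system \eqref{eqn0} over $\bbQ_p$, treating separately the case where $D$ is a square in $\bbQ_p$ (i.e.\ $p$ splits in $\bbQ(\sqrt D)$) and the case where it is not ($p$ inert). In the split case, set $\sqrt D = d \in \bbQ_p^\times$; then $t_2^2 - Dt_3^2$ factors as $(t_2-dt_3)(t_2+dt_3)$, and similarly for the second quadric. One can then parametrize: choose $t_0 = 1$, $t_1 = 0$, which forces $t_2^2 = Dt_3^2$ and $t_2^2 = Dt_4^2$ from the two equations, and these are solvable with $t_3 = t_4 = 1$, $t_2 = d$. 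One must then check that the point $[1:0:d:1:1]$ actually lies on a smooth point of the surface; if it happens to be a bad point one perturbs slightly using Hensel's lemma, exploiting $p \neq 2$ so that squaring is étale away from zero. The inert case is the substantive one: here $t_2^2 - Dt_3^2$ is (up to scaling) the norm form of the unramified quadratic extension $\bbQ_p(\sqrt D)/\bbQ_p$, so it represents every element of $\bbQ_p$ whose valuation is even, and in particular every $p$-adic unit. The strategy is then to choose $t_0, t_1$ (say both units, or one a unit and one divisible by a suitable power of $p$) so that the right-hand sides $t_0 t_1$ and $(t_0+At_1)(t_0+Bt_1)$ both become units (or, more flexibly, both have even valuation), which one can always arrange for $p \neq 2$ by an appropriate choice among finitely many residues modulo $p$; then solve each norm equation for $(t_2, t_3)$ and $(t_2, t_4)$ respectively, making sure the common value $t_2$ is consistent.

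More carefully, I would fix $t_1 = 1$ and search for $t_0 \in \bbZ_p$ with: (i) $v_p(t_0 t_1) = v_p(t_0)$ even and $t_0 \neq 0$; (ii) $v_p((t_0+A)(t_0+B))$ even; and (iii) a compatibility condition letting the same $t_2$ serve both equations — concretely, requiring that the ratio $\frac{(t_0+A)(t_0+B)}{t_0}$ be of the form $t_2^2 - Dt_4^2$ once $t_2$ is determined by $t_0 = t_2^2 - Dt_3^2$, which since the norm group from the unramified extension is exactly $\{x : v_p(x) \text{ even}\}$, reduces to checking that this ratio has even valuation and then that $t_2^2 - (\text{that ratio})$ lies in the value set of $D\cdot(\text{square})$. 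Because the unramified norm form surjects onto all units, the only real constraints are the parity-of-valuation conditions, and these can be met by choosing $t_0$ to be a unit in a residue class mod $p$ avoiding the (at most two) classes of $-A$ and $-B$ mod $p$; since $p \neq 2$ there are at least $p - 1 \geq 2$ nonzero residues, and in fact for all but finitely many configurations of $(A,B)$ mod $p$ one has room to spare. When $p$ is small and the forbidden residues exhaust the units, one instead takes $t_0$ divisible by $p$ (but $t_0 \neq 0$), so that $v_p(t_0) \geq 1$; picking $v_p(t_0)$ even handles (i), and then $t_0 + A$, $t_0 + B$ are units provided $p \nmid AB$, while if $p \mid A$ or $p \mid B$ one adjusts the valuation of $t_0$ to balance parities. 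Finally one invokes Hensel's lemma (valid as $p \neq 2$) to upgrade the approximate solution to an exact one and to guarantee it lies in the smooth locus, using that $S^{(D;A,B)}$ is assumed smooth so its singular locus is empty and any $\bbZ_p$-point reducing to a smooth $\bbF_p$-point of the reduction lifts.

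The main obstacle I anticipate is the bookkeeping of the parity-of-valuation conditions and the simultaneous-$t_2$ compatibility in the inert case when $p$ is small or when $p$ divides $A$, $B$, $A-B$, or the resultant $A^2 - 2AB + B^2 - 2A - 2B + 1$ appearing in the smoothness condition — i.e.\ precisely the primes where the reduction mod $p$ degenerates. For those finitely many primes a direct case analysis modulo $p$ (or modulo $p^2$) will be needed, choosing an explicit solution by hand in each residual configuration; this is elementary but tedious. Everything else is forced: the unramified norm form's surjectivity onto units does the heavy lifting, and $p \neq 2$ together with the smoothness hypothesis makes Hensel lifting automatic.
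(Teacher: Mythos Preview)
The paper does not prove this lemma directly; it cites Proposition~4.3(a) of \cite{JS}. From the proof of Lemma~\ref{eval0} one sees that the method there is geometric: one exhibits a \emph{smooth} $\F_p$-rational point on the reduction $S^{(D;A,B)}_{\F_p}$ and then lifts it by Hensel. Your approach is instead a direct construction over $\Q_p$ exploiting the norm-form structure of $t_2^2-Dt_3^2$ and $t_2^2-Dt_4^2$. That is a legitimate alternative strategy, but as written there are two problems.

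First, a minor one: your explicit point in the split case is wrong. With $t_0=1$, $t_1=0$ the second equation reads $1=t_2^2-Dt_4^2$, not $0=t_2^2-Dt_4^2$, so $[1:0:d:1:1]$ does not lie on the surface. This is easily repaired (for instance $[d:0:d:1:0]$ works when $d^2=D$), so it is not a real obstacle.

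The substantive gap is in the inert case. You correctly observe that the two equations share the variable $t_2$, but you then argue that ``since the norm group from the unramified extension is exactly $\{x:v_p(x)\text{ even}\}$, the only real constraints are the parity-of-valuation conditions.'' This is not correct. Surjectivity of the norm form onto units tells you that for \emph{some} $t_2$ the first equation is solvable in $t_3$, and for \emph{some} $t_2$ the second is solvable in $t_4$; it does not tell you that a \emph{common} $t_2$ exists. Concretely, once $t_0,t_1$ are fixed units, you need a $t_2\in\F_p$ such that both $t_2^2-t_0t_1$ and $t_2^2-(t_0+At_1)(t_0+Bt_1)$ lie in $D\cdot(\F_p)^2\cup\{0\}$, i.e.\ are non-squares or zero. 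Each condition alone is satisfied by roughly $(p+1)/2$ values of $t_2$, and one expects an overlap of size about $p/4$; proving this overlap is nonempty for every $p$ requires an actual argument (a Jacobi-sum or Weil-bound computation on the character sum $\sum_{t_2}\bigl(\tfrac{t_2^2-u}{p}\bigr)\bigl(\tfrac{t_2^2-v}{p}\bigr)$, together with a separate check for $p=3$), not just parity bookkeeping. Equivalently, you are implicitly asserting that a certain genus-$1$ curve over $\F_p$ has a rational point, which is true but needs justification. The paper's route via a smooth point on the reduction avoids this coupled constraint altogether.
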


This is part a) of Proposition 4.3 in \cite{JS}. Hence the only relevant primes are $2$, the infinite place and all ramified primes. Solubility over $\bbR$ is always guaranteed as for example noted in Remark 4.7 in \cite{JS}. Furthermore, if $p=2$ is split, then $S^{(D;A,B)}(\bbQ_2)\neq \emptyset$ by Lemma 4.4.a) in the same paper.\par
In the following, we set $G(A,B)=A^2-2AB+B^2-2A-2B+1$. We recall that the surface $S^{(D;A,B)}$ is smooth over $\bbQ$ if and only if $AB\neq 0$, $A\neq B$ and $G(A,B)\neq 0$ (see Proposition 2.1 in \cite{JS}). Note that if $S^{(D;A,B)}$ is smooth over $\bbQ$, then the same holds for all completions $\bbQ_p$. In the following, we give a more refined and quantitative version of this statement. We observe that if no high power of $p^{l+1}$ divides into any of the expressions $A,B$, $A-B$ or $G(A,B)$ and we are given a primitive solution modulo $p^{8l+1}$, then we can bound the multiplicity of the power of $p$ dividing all of the $2\times 2$ minors of the Jacobian by $4l$.

For convenience, we use in the following the vector notation $\bft=(t_0,\ldots,t_4)$ and set
\begin{equation*}
\begin{split}
Q_1(\bft)&= t_2^2-Dt_3^2-t_0t_1,\\
Q_2(\bft)&= t_2^2-Dt_4^2-(t_0+At_1)(t_0+Bt_1).
\end{split}
\end{equation*}
We also use vector notation for the system $\bfQ=(Q_1,Q_2)$ of quadratic forms.

\begin{lem}\label{local2}
Assume that $p\mid D$ and $p^2\nmid D$ where $p\neq 2$ is a prime. Let $l\geq 1$ be such that $p^{l+1}\nmid A,B,A-B,G(A,B)$. Assume that $\bft\in (\bbZ/p^{8l+1}\bbZ)^{5}$ has components not all divisible by $p$ and satisfies $\bfQ(\bft)\equiv 0 $ modulo $p^{8l+1}$.  Then $p^{4l+1}$ does not divide all $2\times 2$ minors of the Jacobian matrix $J(\bfQ)(\bft)$ at the point $\bft$.
\end{lem}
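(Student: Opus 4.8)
The plan is to argue by contradiction: suppose $p^{4l+1}$ divides all $2\times 2$ minors of the Jacobian $J(\bfQ)(\bft)$ at the primitive point $\bft$. First I would write down the Jacobian explicitly. The rows are $\nabla Q_1 = (-t_1,\, -t_0,\, 2t_2,\, -2Dt_3,\, 0)$ and $\nabla Q_2 = (-(2t_0+(A+B)t_1),\, -((A+B)t_0+2ABt_1),\, 2t_2,\, 0,\, -2Dt_4)$. Since $p\neq 2$ and $p\mid D$ but $p^2\nmid D$, the entries $-2Dt_3$ and $-2Dt_4$ are divisible by $p$ exactly to order $1+v_p(t_3)$ and $1+v_p(t_4)$ respectively, while the other entries have $p$-adic valuation controlled by $A,B$ and the $t_i$. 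The key preliminary step is therefore to control the valuations of $t_0,\dots,t_4$: reducing the equations $\bfQ(\bft)\equiv 0 \pmod{p^{8l+1}}$ modulo small powers of $p$ forces relations among the $v_p(t_i)$. Concretely, modulo $p$ we get $t_2^2\equiv t_0t_1$ and $t_2^2\equiv (t_0+At_1)(t_0+Bt_1)\pmod p$, so subtracting gives $(A+B)t_0t_1 + AB t_1^2\equiv 0\pmod p$, i.e. $t_1((A+B)t_0+ABt_1)\equiv 0\pmod p$; combined with primitivity this should pin down which coordinates can be divisible by $p$ and force, in particular, that not both $t_3,t_4$ are highly divisible by $p$.

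The second step is to translate the vanishing of all $2\times 2$ minors into divisibility statements. Each minor involving columns $4$ and $5$ (the $t_3,t_4$ columns) is $(-2Dt_3)(-2Dt_4)=4D^2t_3t_4$ up to sign; the minors pairing column $4$ with columns $1,2,3$ are $\pm 2Dt_3$ times the corresponding entry of $\nabla Q_2$, and similarly for column $5$; and the minor from columns $1,2,3$ among the two rows must be divisible by $p^{4l+1}$ as well. Assuming all these are divisible by $p^{4l+1}$, I would play off the smallness hypothesis $p^{l+1}\nmid A,B,A-B,G(A,B)$. The point is that $G(A,B)=(A-B)^2-2(A+B)+1$ is, up to constants and lower-order terms, the discriminant of the relevant pencil, and its $p$-adic valuation being at most $l$ prevents the two quadrics from being ``too tangent'' mod high powers of $p$. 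So from the minor divisibilities plus the relations among the $v_p(t_i)$ from Step one, I would derive that some combination of $A,B,A-B,G(A,B)$ must be divisible by $p^{l+1}$, contradicting the hypothesis. The numerology ($8l+1$ on the equation side, $4l+1$ on the minor side) is exactly what is needed: one loses a factor of roughly $2$ in valuation at each subtraction/elimination, and one needs enough room (hence $8l+1$) for the congruences modulo $p^{8l+1}$ to still carry meaningful information after these losses, while $4l+1$ is the threshold below which the Jacobian rank-drop genuinely signals singularity of the reduction.

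Concretely, here is the elimination I expect to carry out. From $Q_1\equiv 0$ and $Q_2\equiv 0$ modulo $p^{8l+1}$, subtracting gives $Dt_4^2 - Dt_3^2 \equiv (A+B)t_0t_1 + ABt_1^2 \pmod{p^{8l+1}}$, hence $D(t_4-t_3)(t_4+t_3)\equiv t_1((A+B)t_0+ABt_1)\pmod{p^{8l+1}}$. On the other side, the assumed minor vanishing gives us that $t_3$ and $t_4$ times the first three components of $\nabla Q_1$ and $\nabla Q_2$ are all $\equiv 0 \pmod{p^{4l+1}}$, and $t_3t_4\equiv 0\pmod{p^{4l+1}}$ (after dividing by $4D^2$, which is a unit times $p^2$, so actually $t_3 t_4 \equiv 0 \pmod{p^{4l-1}}$). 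Arguing on valuations: if, say, $v_p(t_4)\geq 2l$ then $v_p(t_3)$ is small (at most $2l-1$ from the $t_3t_4$ minor, but really I want it to be $0$ or at least $\le l$), and then the minors $2Dt_4 \cdot (\nabla Q_1)_j$ divisible by $p^{4l+1}$ force $(\nabla Q_1)_j \equiv 0 \pmod{p^{2l}}$ for $j=1,2,3$, i.e. $t_0,t_1,t_2$ are all divisible by $p^{2l}$ — but then by primitivity $v_p(t_3)=0$, and plugging into $Q_1\equiv 0 \pmod{p^{8l+1}}$ gives $Dt_3^2\equiv 0 \pmod{p^{4l}}$ so $v_p(D)\geq 4l\geq 4$, contradicting $p^2\nmid D$. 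The remaining case $v_p(t_3),v_p(t_4)<2l$ is handled symmetrically using that both $2Dt_3(\nabla Q_2)_j$ and the pure $(1,2,3)$-minor of the two rows are divisible by $p^{4l+1}$, which combined with the subtracted equation above isolates a factor of $(A-B)$ or $G(A,B)$ and forces $v_p$ of it to be $\geq l+1$.

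The main obstacle I anticipate is Step one — the bookkeeping of the valuations $v_p(t_i)$ under primitivity. There are several cases depending on which coordinates are divisible by $p$ and to what order (in particular whether $p\mid t_1$, which changes the factorization of the subtracted equation), and one must be careful that the hypothesis $p^{l+1}\nmid A,B,A-B,G(A,B)$ is invoked in exactly the right case and that the quantitative loss of $2$ in valuation per elimination step is tracked honestly so that the final contradiction comes out with the stated exponents $8l+1$ and $4l+1$ rather than something weaker. Once the coordinate valuations are under control, converting ``all minors divisible by $p^{4l+1}$'' into ``some of $A,B,A-B,G(A,B)$ divisible by $p^{l+1}$'' is a short linear-algebra computation on the $2\times 5$ Jacobian.
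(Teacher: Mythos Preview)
Your plan has the right overall shape --- write down the Jacobian, assume all minors are divisible by $p^{4l+1}$, and derive that one of $A,B,A-B,G(A,B)$ is divisible by $p^{l+1}$ --- but the case analysis you propose is set up the wrong way round and will not close.

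The decisive preliminary fact, which you only gesture at, is that $p\nmid t_1$. This follows directly from the equations and primitivity: if $p\mid t_1$ then $Q_1\equiv 0\pmod{p^2}$ gives $p\mid t_2$, then $Q_2\equiv 0\pmod p$ gives $p\mid t_0$, and then both equations force $p\mid t_3,t_4$, contradicting primitivity. Once you know $p\nmid t_1$, the $(1,5)$-minor $2Dt_1t_4$ being divisible by $p^{4l+1}$ immediately forces $v_p(t_4)$ to be large (at least $4l$), and the $(3,4)$-minor $4Dt_2t_3$ forces $v_p(t_2t_3)\geq 4l$. So the right case split is on how the valuation $v_p(t_2t_3)\geq 4l$ distributes between $t_2$ and $t_3$: either both are $\geq 2l$, or $v_p(t_2)<2l$ (whence $v_p(t_3)\geq 2l+1$), or $v_p(t_3)<2l$ (whence $v_p(t_2)\geq 2l+1$). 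In the first case $Q_1$ gives $p^{4l}\mid t_0$ and then $Q_2$ gives $p^{4l}\mid AB$; in the second, the minors pairing column~3 with columns~1 and~2 give congruences modulo $p^{2l+2}$ that eliminate to $p^{2l+2}\mid G(A,B)$; in the third, the minors pairing column~4 with columns~1 and~2 eliminate to $p^{2l+1}\mid (A-B)^2$.

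Your proposed split on $v_p(t_4)$ runs the implications backwards. If $v_p(t_4)\geq 2l$, the minor $2Dt_4\cdot(\nabla Q_1)_j$ being divisible by $p^{4l+1}$ does \emph{not} force $(\nabla Q_1)_j\equiv 0\pmod{p^{2l}}$: a large $v_p(t_4)$ makes that minor highly divisible regardless of $(\nabla Q_1)_j$, so you get no information. Likewise $p^{4l-1}\mid t_3t_4$ with $v_p(t_4)$ large says nothing about $v_p(t_3)$. The subtraction $Q_2-Q_1$ you perform is not needed and does not lead anywhere useful here. Start instead from $p\nmid t_1$; then everything falls out in a few lines.
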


Note that the assumption $p^2\nmid D$ is crucial for the proof of Lemma \ref{local2}. However, since our discriminant $D$ is squarefree, this is no restriction in our application. 

\begin{proof}
Let $\bft\in (\bbZ/p^{8l+1}\bbZ)^{5}$ be as in the statement of the lemma. We first observe that $p\nmid t_1$. Otherwise, the congruence $Q_1(\bft)\equiv 0$ mod $p^2$ would imply that $p|t_2$ and then $p|(t_0+At_1)(t_0+Bt_1)$ by the second congruence $Q_2(\bft)\equiv 0$ mod $p^2$. This again gives $p|t_0$, which implies by the congruence from the first quadratic equation that $p|t_3$ and by the second that $p|t_4$, which is a contradiction to $\bft$ being primitive in a sense that not all of its coordinates are divisible by $p$.\par
Next we recall that the Jacobian matrix at the point $\bft$ is given by 
\begin{equation}\label{Jacobian}
\left(
\begin{array}{ccccc}
       t_1     &        t_0      & -2t_2 & 2Dt_3 &  0    \\
 2t_0+(A+B)t_1 & (A+B)t_0+2ABt_1 & -2t_2 &  0    & 2Dt_4 
\end{array}
\right).
\end{equation}
Assume that $p^{4l+1}$ divides all $2\times 2$ minors and that $\smash{\bfQ(\bft)\equiv 0 \mmod p^{8l+1}}$. We may already assume that $p\nmid t_1$. Since $p\neq 2$, we conclude first that $p^{4l+1}|t_4$. Furthermore, we observe that $p^{4l}|t_2t_3$ which implies that $p^{2l}|t_2$ or $p^{2l}|t_3$.\par
First assume that both $t_2,t_3$ are divisible by $p^{2l}$. In this case, the congruence $Q_1(\bft)\equiv 0 \mmod p^{8l+1}$ implies that $p^{4l}|t_0$ and the second quadratic congruence delivers $p^{4l}|AB$, which is a contradiction to our assumption.\par
In the case where $p^{2l}\nmid t_2$, we have that $p^{2l+1}|t_3$. Considering the $2\times 2$-minors of the Jacobian consisting of the 1st and 3rd and the 2nd and 3rd column, we find that
\begin{equation*}
  (A+B-1)t_1+2t_0\equiv (A+B-1)t_0+2ABt_1\equiv 0 \mmod p^{2l+2}.
\end{equation*}
This leads to 
\begin{equation*}
4ABt_1-(A+B-1)^2t_1\equiv 0 \mmod p^{2l+2},
\end{equation*}
and hence to $p^{2l+2}|G(A,B)$, which is a contradiction, as well.\par
Finally, let us consider the case where $p^{2l}\nmid t_3$ and $p^{2l+1}|t_2$. Computing the minors of the Jacobian consisting of the 1st and 4th and 2nd and 4th column shows that
\begin{equation*}
2t_0+(A+B)t_1\equiv (A+B)t_0+2ABt_1\equiv 0 \mmod p^{2l+1}.
\end{equation*}
This leads to 
\begin{equation*}
4ABt_1-(A+B)^2t_1\equiv 0 \mmod p^{2l+1},
\end{equation*}
and hence to $p^{l+1}|A-B$. 
\end{proof}

The restriction $p\neq 2$ is not strictly necessary in Lemma \ref{local2}, but one would need to change the exponents slightly for $p=2$. Since we assume $p=2$ to be split in our applications, we do not include this case into the lemma.\par

\begin{defi}
Let $\calR_l(p)$ be the set of residue classes of $A,B$ modulo $p^{8l+1}$ such that $ p^{l+1}\nmid A,B,A-B,G(A,B)$ and the congruence system $\bfQ(\bft)\equiv 0\mmod p^{8l+1}$ has a primitive solution.
\end{defi}

The following lemma justifies the definition of the sets $\calR_l(p)$ and explains their role. 

\begin{lem}\label{local3}
Let $p\neq 2$ be a ramified prime with $p^2\nmid D$ and $l\geq 1$. Assume that $p^{l+1}\nmid A,B,A-B,G(A,B)$. Then $S^{(D;A,B)}(\bbQ_p)\neq \emptyset$ if and only if $(A,B)$ modulo $p^{8l+1}$ is contained in $\calR_l(p)$.
\end{lem}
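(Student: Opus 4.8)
The plan is to prove both directions of the equivalence by a Hensel-type lifting argument, using Lemma~\ref{local2} as the key input on the non-vanishing of the Jacobian minors.

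\textbf{The easy direction.} First I would show that if $S^{(D;A,B)}(\bbQ_p)\neq\emptyset$, then $(A,B)$ modulo $p^{8l+1}$ lies in $\calR_l(p)$. A $\bbQ_p$-point of the smooth complete intersection $S^{(D;A,B)}$ can be represented by a primitive vector $\bft\in\bbZ_p^5$ with $\bfQ(\bft)=0$; reducing modulo $p^{8l+1}$ gives a primitive solution of the congruence system, and since we have assumed $p^{l+1}\nmid A,B,A-B,G(A,B)$, all the conditions in the definition of $\calR_l(p)$ are met. (Primitivity is preserved: if all coordinates were divisible by $p$ we could divide through, contradicting primitivity of the original vector.)

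\textbf{The converse direction.} This is the substantive part. Suppose $(A,B)$ modulo $p^{8l+1}$ lies in $\calR_l(p)$, so there is a primitive $\bft_0\in(\bbZ/p^{8l+1}\bbZ)^5$ with $\bfQ(\bft_0)\equiv 0\pmod{p^{8l+1}}$. Lift $\bft_0$ to a vector $\bft\in\bbZ_p^5$; then $\bfQ(\bft)\equiv 0\pmod{p^{8l+1}}$ and $\bft$ is primitive. By Lemma~\ref{local2}, some $2\times 2$ minor of the Jacobian $J(\bfQ)(\bft)$ is not divisible by $p^{4l+1}$; say its $p$-adic valuation is $m\le 4l$. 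The plan is to invoke the multivariable Hensel's lemma (or the Newton iteration version for complete intersections), which applies to a system of $r$ equations in $n$ variables when one has an approximate solution that is ``good enough'' relative to the size of the Jacobian minors. The quantitative form one needs: if $\bfQ(\bft)\equiv 0\pmod{p^{2m+1}}$ and some $2\times 2$ minor of $J(\bfQ)(\bft)$ has valuation exactly $m$, then the approximate solution lifts to an exact $\bbZ_p$-solution $\tilde\bft$ with $\tilde\bft\equiv\bft$ modulo a suitable power of $p$. Since $2m+1\le 8l+1$, the hypothesis $\bfQ(\bft)\equiv 0\pmod{p^{8l+1}}$ supplies exactly this. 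The resulting $\tilde\bft$ is still primitive (it is congruent to $\bft$ modulo $p$, and $\bft$ has a unit coordinate since we showed $p\nmid t_1$ in the proof of Lemma~\ref{local2}, or more directly it is primitive by hypothesis), hence defines a point of $S^{(D;A,B)}(\bbQ_p)$.

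\textbf{Main obstacle.} The delicate point is pinning down the correct quantitative Hensel lemma for an intersection of two quadrics in $\bbP^4$ and checking that the exponent $8l+1$ is genuinely large enough: one must track how the valuations of the various minors interact (recall from the proof of Lemma~\ref{local2} that the argument splits into cases according to which of $t_2,t_3,t_4$ carries the large power of $p$), and confirm that in every case a single nonvanishing minor of valuation $\le 4l$ suffices to run the iteration given precision $8l+1 \ge 2(4l)+1$. I would state the version of Hensel's lemma I use explicitly (e.g. following Greenberg's or a standard reference on approximate solutions to polynomial systems over $\bbZ_p$), reducing after a unimodular change of coordinates to the case where the chosen minor involves two of the variables, so that the implicit function theorem over $\bbZ_p$ applies directly to solve for those two variables in terms of the other three.
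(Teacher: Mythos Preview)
Your proposal is correct and follows essentially the same approach as the paper: both directions are handled exactly as you describe, with the substantive direction reducing to Lemma~\ref{local2} combined with the quantitative Hensel lemma (the paper cites Proposition~5.21 in Greenberg, just as you suggest), and the inequality $2(4l)+1\le 8l+1$ is precisely the arithmetic that makes the lifting go through. Your ``main obstacle'' paragraph is overcautious---once one has a single $2\times 2$ minor of valuation $\le 4l$, the standard Hensel statement applies uniformly without further case analysis.
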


\begin{proof}
It is clear that $(A,B)\in \calR_l(p)$ if $S^{(D;A,B)}(\bbQ_p)\neq \emptyset$. Hence we need to show that there is a $\bbQ_p$-point on $S^{(D;A,B)}$ as soon as $(A,B)\in \calR_l(p)$. For this, recall that $p^{l+1}\nmid A,B,A-B,G(A,B)$. We assume that we are given a primitive vector $\bft \in (\bbZ/p^{8l+1}\bbZ)^{5}$ with $\bfQ(\bft)\equiv 0 \mmod p^{8l+1}$. By Lemma \ref{local2},  we know that $p^{4l+1}$ does not divide the determinants of all $2\times 2$ minors of the Jacobian $J(\bfQ)(\bft)$. Hence a version of Hensel's Lemma (see Proposition 5.21 in \cite{Greenberg}) implies that there is some $\bft'\in \bbZ_p^5$ such that $\bfQ(\bft')=0$ and $\bft'\equiv \bft$ modulo $p^{4l+1}$, and therefore $\bft'$ is in particular not the zero vector.
\end{proof}

We are now prepared to deduce the asymptotic for $R_D^\loc (N)$ as stated in Proposition \ref{prop1}. We note that the cases of $A,B$ for which $S^{(D;A,B)}$ is singular only contribute a small error. By Proposition 2.1 in \cite{JS}, the surface $S^{(D;A,B)}$ is singular if and only if $AB=0$ or $A-B=0$ or $G(A,B)=0$. And it is clear that
\begin{equation*}
\sharp \{ |A|,|B|\leq N: AB(A-B)G(A,B)=0\}\ll N.
\end{equation*}
We assume that $D\equiv 1 \mmod 8$ is squarefree. Then Lemma \ref{local1} implies that
\begin{equation*}
R_D^\loc(N)= \sharp\{|A|,|B|\leq N: S^{(D;A,B)}(\bbQ_p)\neq \emptyset\ \forall \ p|D\} +O(N).
\end{equation*}
Note that we always have $S^{(D;A,B)}(\R)\neq \emptyset$, since $D$ is positive.\par
Now we use the characterisation in Lemma \ref{local3} to detect local solubility at primes dividing $D$. For this, let $\smash{D=\prod_{i=1}^r p_i}$ be the prime factorization of $D$ into primes $p_1<\ldots <p_r$, and $L$ be the largest positive integer such that $\smash{\frac{N^{1/2}}{D^8}<D^{8L+1}\leq N^{1/2}}$. Then we have
\begin{equation*}
R_D^\loc (N)=\sharp \{|A|,|B|\leq N: (A,B)\mmod p_i^{8L+1}\in \calR_L(p_i)\ \forall \ 1\leq i\leq r\} + O(N) +E_1,
\end{equation*}
where $E_1$ is an error term bounded by
\begin{equation*}
E_1\ll \sum_{i=1}^r \sharp\{|A|,|B|\leq N: p_i^{L+1} \mbox{ divides one of } A,B,A-B,G(A,B)\}.
\end{equation*}
We observe that 
\begin{equation*}
8L+1 \geq \frac{1}{2}\frac{\log N}{\log D}-8.
\end{equation*}
Hence we have
\begin{equation*}
E_1\ll \sum_{i=1}^r \frac{N^2}{p_i^{L/2}} \ll \frac{N^2}{p_1^{L/2}} \ll \frac{N^2}{2^{L/2}}\ll N^{2-\tet(D)},
\end{equation*}
where $0<\tet(D)<1/2$ is given by $\tet(D)= \frac{\log 2}{32\log D}$.\par
We further rewrite the counting function $R_D^\loc (N)$ as 
\begin{equation*}
\begin{split}
R_D^\loc (N)= &\frac{4N^2}{(D^{8L+1})^2}\sharp\{A,B \mmod D^{8L+1}: (A,B)\in \calR_L(p_i) \ \forall \ 1\leq i\leq r\} \\ &+O(D^{8L+1}N) +O(N^{2-\tet(D)}).
\end{split}
\end{equation*}
We set $\calR_0(p):=\emptyset$ for all primes $p$. For any $l\geq 1$, we let $\calR_l^*(p)\subset \calR_l(p)$ be the set of tuples $(A,B)$ modulo $p^{8l+1}$ such that the reduction of $(A,B)$ modulo $p^{8(l-1)+1}$ is not contained in $\calR_{l-1}(p)$.
For each prime dividing $D$, we now sort the tuples $(A,B)$ according to the smallest $l$, for which $(A,B)\mmod p^{8l+1}\in \calR_l(p)$. In this way, we obtain
\begin{equation}\label{eqnlocal4}
\begin{split}
R_D^\loc (N)& =  \frac{4N^2}{D^{16L+2}}\sum_{l_1,\ldots, l_r=1}^L \sharp \{ A,B \mmod D^{8L+1}: (A,B)\mmod p_i^{8l_i+1}\in \calR_{l_i}^*(p_i)\ \forall 1\leq i\leq r\} \\  &\hspace{8cm}+O(N^{2-\tet(D)})\\ &=  \frac{4N^2}{D^{16L+2}}\sum_{l_1,\ldots, l_r=1}^L \prod_{i=1}^r\sharp \{ A,B \mmod p_i^{8L+1}: (A,B)\mmod p_i^{8l_i+1}\in \calR_{l_i}^*(p_i)\} +O(N^{2-\tet(D)})\\ &=  \frac{4N^2}{D^{16L+2}}\sum_{l_1,\ldots, l_r=1}^L \prod_{i=1}^r \left(\frac{p_i^{8L+1}}{p_i^{8l_i+1}}\right)^2\sharp \{ A,B \mmod p_i^{8l_i+1}: (A,B)\in \calR_{l_i}^*(p_i)\} \\ &\hspace{8cm}+O(N^{2-\tet(D)}) \\ &= 4N^2 \sum_{l_1,\ldots, l_r=1}^L \prod_{i=1}^r \frac{|\calR_{l_i}^*(p_i)|}{p_i^{2(8l_i+1)}} +O(N^{2-\tet(D)}).
\end{split}
\end{equation}
We claim that the last sum is absolutely convergent for $L\rightarrow \infty$. For this, we first observe that 
\begin{equation}\label{eqnx}
\begin{split}
|\calR_{l}^*(p)| &\ll \sharp \{ A,B \mmod p^{8l+1}: p^l \mbox{ divides one of } A,B, A-B, G(A,B)\} \\ &\ll \frac{p^{2(8l+1)}}{p^{\lfloor l/2\rfloor}}\ll_D \frac{p^{2(8l+1)}}{p^{l/2}}.
\end{split}
\end{equation}
Here we have used that $G(A,B)$ is a quadratic polynomial in $A,B$ and $p^{1/2}\ll_D 1$. 
Hence we can estimate
\begin{equation}\label{eqnlocal5}
\sum_{\substack{l_1,\ldots, l_r\\ \max (l_1,\ldots ,l_r)>L}} \hspace{-0.3cm}\prod_{i=1}^r \frac{|\calR_{l_i}^*(p_i)|}{p_i^{2(8l_i+1)}} \ll \hspace{-0.5cm}\sum_{\substack{l_1,\ldots, l_r\\ \max (l_1,\ldots ,l_r)>L}} \hspace{-0.3cm}\prod_{i=1}^r \frac{1}{p_i^{l_i/2}} \ll \sum_{l=L}^\infty \frac{1}{2^{l/2}} \ll 2^{-L/2}\ll N^{-\tet(D)}.
\end{equation}
Finally, we put 
\begin{equation}\label{cloc}
c_\loc := 4\sum_{l_1,\ldots, l_r=1}^\infty \prod_{i=1}^r \frac{|\calR_{l_i}^*(p_i)|}{p_i^{2(8l_i+1)}}= 4 \prod_{i=1}^r \sum_{l_i=1}^\infty \frac{|\calR_{l_i}^*(p_i)|}{p_i^{2(8l_i+1)}} .
\end{equation}
Then equation (\ref{eqnlocal4}) together with equation (\ref{eqnlocal5}) gives
\begin{equation*}
R_D^\loc (N)= c_\loc N^2+O(N^{2-\tet(D)}),
\end{equation*}
which proves the asymptotic in Proposition \ref{prop1} for some constant $c_\loc$. Next, we observe that the constant $c_\loc$ is indeed positive.

\begin{lem}
Let $D\equiv 1$ modulo $8$ and assume that $D$ is squarefree. Then one has the lower bound
\begin{equation*}
c_\loc \geq \frac{4}{D^2}.
\end{equation*}
\end{lem}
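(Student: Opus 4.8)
The plan is to combine the asymptotic of Proposition~\ref{prop1} with an explicit positive-density family of everywhere locally soluble surfaces. Since Proposition~\ref{prop1} gives $R_D^\loc(N)=c_\loc N^2+O(N^{2-\tet(D)})$, and hence $c_\loc=\lim_{N\to\infty}R_D^\loc(N)/N^2$, it suffices to prove $R_D^\loc(N)\ge \frac{4N^2}{D^2}+O(N)$. For this I would show that every $(A,B)$ with $A\equiv B\equiv 0\pmod D$ for which $S^{(D;A,B)}$ is smooth has $S^{(D;A,B)}(\bbQ_\nu)\neq\emptyset$ for all places $\nu$. These $(A,B)$ form a single residue class modulo $D$, and so number $\frac{4N^2}{D^2}+O(N)$ in the box $|A|,|B|\le N$, while at most $O(N)$ of them give a singular surface (by Proposition~2.1 of \cite{JS} the singular locus is $AB(A-B)G(A,B)=0$). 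Thus this claim immediately yields the required lower bound for $R_D^\loc(N)$.

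To verify the claim, the places prime to $D$ are covered by what has already been recalled: real solubility holds since $D>0$ (Remark~4.7 of \cite{JS}); since $D\equiv 1\pmod 8$ the prime $2$ splits in $\bbQ(\sqrt D)$, so Lemma~4.4.a) of \cite{JS} gives $S^{(D;A,B)}(\bbQ_2)\neq\emptyset$; and for odd $p\nmid D$ the prime is unramified in $\bbQ(\sqrt D)$, so Lemma~\ref{local1} applies once $S^{(D;A,B)}$ is known to be smooth. For a prime $p\mid D$ (necessarily odd) I would exhibit the primitive vector $\bft_0=(1,1,1,0,0)\in\bbZ^5$: one has $Q_1(\bft_0)=1-1=0$ exactly, $Q_2(\bft_0)=-(A+B+AB)\equiv 0\pmod p$ because $p\mid A$ and $p\mid B$, and the $2\times 2$ minor of the Jacobian~(\ref{Jacobian}) formed by the first and third columns equals $2(1+A+B)$, which is a $p$-adic unit. (Conceptually, $\bft_0$ reduces to a smooth point of the special fibre, which for $p\mid D$ and $A\equiv B\equiv 0$ is the complete intersection $\{t_0t_1=t_2^2,\ t_0^2=t_2^2\}$, the point lying on its component $\{t_0=t_1=t_2\}$.) Hensel's Lemma in the form of Proposition~5.21 of \cite{Greenberg}, used exactly as in the proof of Lemma~\ref{local3}, then lifts $\bft_0$ to a primitive $\bft'\in\bbZ_p^5$ with $\bfQ(\bft')=0$, which is the desired point of $S^{(D;A,B)}(\bbQ_p)$.

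The main point to get right is that one fixed vector must serve at all primes $p\mid D$ simultaneously; the computation above is arranged so that this holds with no case analysis in $p$, since $2(1+A+B)$ is manifestly a unit at every odd prime dividing both $A$ and $B$, and the choice $A\equiv B\equiv 0\pmod D$ makes $Q_2(\bft_0)$ vanish modulo every such $p$ at once. (A potential worry is that the special fibre is too singular for a point to lift, but the explicit nonvanishing of the minor shows $\bft_0$ is already a smooth point, which sidesteps this entirely.) Once these local points are in hand, combining $R_D^\loc(N)\ge\frac{4N^2}{D^2}+O(N)$ with Proposition~\ref{prop1} and letting $N\to\infty$ gives $c_\loc\ge\frac{4}{D^2}$; everything else is routine bookkeeping.
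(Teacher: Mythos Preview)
Your proof is correct and takes a somewhat different, cleaner route than the paper's. The paper works directly with the product formula~(\ref{cloc}) and, prime by prime, bounds each factor $\sum_{l\ge1}|\calR_l^*(p)|/p^{2(8l+1)}$ from below by $1/p^2$. To do this it distinguishes cases: for $p>3$ it picks a residue class $(a,b)\bmod p$ with $a$ a nonzero square, $a\not\equiv -1$, $a^2+a+1\not\equiv 0$, and $b\equiv a/(a+1)$, then invokes Proposition~5.1.a) of \cite{JS}; only for $p=3$ does it use $(a,b)=(0,0)$ together with the smooth point $(1:1:1:0:0)$. Your observation that $(A,B)\equiv(0,0)\pmod p$ works uniformly at \emph{every} ramified prime, via that same explicit smooth point and a direct Jacobian-minor check, is shorter and self-contained, avoiding the external reference entirely. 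Deducing the bound by passing through the limit $c_\loc=\lim_{N\to\infty}R_D^\loc(N)/N^2$ (which is legitimate, as the asymptotic with error term has already been established before this lemma) rather than through the product formula is also a slightly more conceptual path. The paper's more elaborate choice of residue classes for $p>3$ is not needed here, though classes of that shape reappear later in Lemma~\ref{Cpos}, where one must also control the evaluation of the Brauer class.
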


\begin{proof}
We use the expression for $c_\loc$ in (\ref{cloc}) to prove the lower bound $\smash{c_\loc \geq \frac{4}{D^2}}$. Note that $p|D$ implies that $p\neq 2$ by the congruence condition on $D$ modulo $8$. We first consider the case $p>3$. For this, we fix a choice of residue classes $(a,b)$ modulo $p$ with the property that $\smash{(\frac{a}{p})=1}$ and $a\not\equiv 0,-1$ modulo $p$ as well as $\smash{a^2+a+1\not\equiv 0}$ modulo $p$, and set $\smash{b\equiv \frac{a}{a+1}}$ modulo $p$. Such a choice is possible, since $\smash{p>3}$. If $\smash{(A,B)}$ is a pair of residue classes modulo some power $\smash{p^k}$ with $\smash{k\geq 1}$ that reduces to $(a,b)$ modulo $p$, then Proposition 5.1.a) in \cite{JS} shows that the system $\bfQ(\bft)=0$ has a primitive solution modulo $p^k$. We deduce that 
\begin{equation*}
\begin{split}
\sum_{l=1}^L &\frac{|\calR_l^*(p)|}{p^{2(8l+1)}}\geq \\ &p^{-2(8L+1)}  \sharp\{ (A,B) \mmod p^{8L+1}: (A,B)\mmod p = (a,b),\ p^{L+1}\nmid A,B,A-B,G(A,B)\}.
\end{split}
\end{equation*}
We take the limit for $L\rightarrow \infty$ and, in combination with the bound in (\ref{eqnx}), we obtain
\begin{equation}\label{eqny}
\sum_{l=1}^\infty \frac{|\calR_l^*(p)|}{p^{2(8l+1)}}\geq \frac{1}{p^2}.
\end{equation}
Now consider the case where $p=3$ and $p|D$. Then we choose $(a,b)=(0,0)$ and observe that $(1:1:1:0:0)$ is a smooth point on the reduction of $S^{(D;A,B)}$ for any $(A,B)$ that reduces to $(a,b)$ modulo $3$. Hence Hensel's Lemma implies that $S^{(D;A,B)}(\Q_3)\neq \emptyset$ for such $(A,B)$. Now the same argument as above shows that (\ref{eqny}) also holds for $p=3$. Together with equation (\ref{cloc}), this completes the proof of the lemma.
\end{proof}

\section{Evaluation of the Brauer group at inert primes}\label{section3}

For a surface $S^{(D;A,B)}$ in the family (\ref{eqn0}), we can explicitly write down a Brauer class, which is locally defined by one of the quotients $t_0/(t_0+At_1)$, $t_1/(t_0+At_1)$, $t_0/(t_0+Bt_1)$ or $t_1/(t_0+Bt_1)$.
Let $l$ be some place and $\bft\in S^{(D;A,B)}(\Q_l)$ a point, where one of the quotients is defined and non-zero. Denote one of the quotients by $q$. Then the evaluation of the Brauer class $\alp$  described in Proposition 3.2 in \cite{JS} is given by
\begin{equation*}
\ev_{\alp,l}(\bft)=\left\{\begin{array}{ccl}0 & \mbox{ if } & (q,D)_l=1, \\ \frac{1}{2} & \mbox{ if } & (q,D)_l=-1,\end{array}\right.
\end{equation*}
and the evaluation is independent of the choice of $q$ above.\par
In Proposition 4.3 in \cite{JS}, we observed that $S^{(D;A,B)}(\Q_p)\neq \emptyset$ as soon as $p\neq 2$ is a finite unramified prime. We even have the stronger statement that in this case there is always a point in $S^{(D;A,B)}(\Q_p)$ on which the Brauer class $\alp$ evaluates to $0$. 

\begin{lem}\label{eval0}
let $p\neq 2$ be some unramified prime in the extension $\Q(\sqrt{D})$. Then there is a point $\bft\in S^{(D;A,B)}(\Q_p)$ such that $\ev_{\alp,p}(\bft)=0$. 
\end{lem}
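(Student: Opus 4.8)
The plan is to produce an explicit $\Q_p$-point on $S^{(D;A,B)}$ at which one of the distinguished quotients $q$ is a square (or at least a local norm from $\Q_p(\sqrt{D})$), so that $(q,D)_p = 1$ and hence $\ev_{\alp,p}(\bft) = 0$. Since $p \neq 2$ is unramified in $\Q(\sqrt{D})$, the Hilbert symbol $(x,D)_p$ is trivial whenever $x \in \Z_p^\times$ has even $p$-adic valuation, or in the inert case whenever $v_p(x)$ is even; in the split case $(x,D)_p = 1$ for every $x$. So it suffices to exhibit a point where, say, $t_0/(t_0+At_1)$ lies in $\Z_p^\times$ (a $p$-adic unit) when $p$ is inert, and any nonzero value works when $p$ is split. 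I would treat the split case and the inert case separately, since Lemma~\ref{eval0} only needs existence, and the split case is essentially immediate.

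First I would recall from the proof of Proposition~4.3 in \cite{JS} (or reconstruct) how the $\Q_p$-point produced there is obtained: one typically reduces mod $p$, finds a smooth $\F_p$-point on the reduction of $S^{(D;A,B)}$ by a counting/Weil-bound argument, and lifts by Hensel. The refinement needed here is to choose that $\F_p$-point so that in addition $t_0 \not\equiv 0$ and $t_0 + At_1 \not\equiv 0$ modulo $p$ (i.e.\ the quotient $q = t_0/(t_0+At_1)$ reduces to a nonzero element of $\F_p^\times$), and then to arrange that this unit value is a square. One natural device: the defining equation $t_0t_1 = t_2^2 - Dt_3^2$ means that whenever $t_1 \in \Z_p^\times$ we can write $t_0 = (t_2^2 - Dt_3^2)/t_1$, so we have a free parameter to play with; by scaling we may normalise, say, $t_1 = 1$, and then search for $(t_0,t_2,t_3,t_4)$ solving both quadrics with $t_0, t_0+A \in \Z_p^\times$ and $t_0/(t_0+A)$ a square in $\Q_p^\times$. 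Concretely I expect one can just fix a small square value $c \in (\F_p^\times)^2$ with $c \neq 1$, impose $t_0 \equiv c(t_0 + A)$, i.e.\ $t_0 \equiv \frac{cA}{1-c} \pmod p$ (for a suitable choice of $c$ avoiding the bad residues), set $t_1 = 1$, solve for the remaining coordinates from the two quadrics over $\F_p$ using that nondegenerate conics/quadrics over $\F_p$ with $p$ odd have smooth points whenever the "obvious" count $\ge 1$, and then Hensel-lift. One must check the Jacobian nondegeneracy at the lifted point, which follows from smoothness of $S^{(D;A,B)}$ over $\Q$ exactly as in Lemma~\ref{local1}/Proposition~4.3 of \cite{JS}.

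The main obstacle I anticipate is not the algebra but the bookkeeping of exceptional small primes: for very small $p$ (for instance $p = 3$, or $p = 5$) there may be too few residue classes to simultaneously avoid $t_0 \equiv 0$, $t_0 + At_1 \equiv 0$, and still land on a square value of the quotient with the quadrics solvable — so some cases likely need to be handled by hand or by passing to a different one of the four quotients $t_0/(t_0+At_1)$, $t_1/(t_0+At_1)$, $t_0/(t_0+Bt_1)$, $t_1/(t_0+Bt_1)$, which is permissible since the evaluation is independent of the choice of $q$. A clean way to organise this is: (i) for $p$ split in $\Q(\sqrt{D})$, any $\Q_p$-point works since then $(q,D)_p = 1$ automatically, so invoke Proposition~4.3 of \cite{JS} directly; (ii) for $p$ inert with $p$ not too small, run the Weil-bound argument on the reduction, which gives $\gg p^2$ smooth $\F_p$-points, more than enough to impose the finitely many congruence conditions making $q$ a nonzero square mod $p$; (iii) for the finitely many remaining small inert primes, exhibit an explicit point directly, trying each of the four quotients in turn. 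I would expect (iii) to be the part that takes the most care, but it is a finite check. Once the point is found with $q \in (\Z_p^\times)^2$, we have $(q,D)_p = 1$ and therefore $\ev_{\alp,p}(\bft) = 0$, completing the proof.
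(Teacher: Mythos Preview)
Your overall strategy is sound and would work, but you are doing far more than necessary, and the paper's argument is considerably shorter.

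You correctly observe that for $p$ split the claim is immediate, and that for $p$ inert it suffices to find a point at which one of the four quotients $q$ is a $p$-adic \emph{unit} (since for the unramified quadratic extension every unit is a local norm, so $(q,D)_p=1$). Having said this, you then slip into trying to arrange that $q$ be a \emph{square} in $\Q_p^\times$, which is strictly stronger and not needed; this is what drives you toward Weil bounds, imposing extra congruences, and the small-prime casework you anticipate in~(iii).

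The paper sidesteps all of that with a single observation: for \emph{any} smooth $\F_{\!p}$-point on the reduction (whose existence is already supplied by Proposition~4.3 of \cite{JS}), one of $t_0,t_1$ and one of $t_0+At_1,\,t_0+Bt_1$ is automatically nonzero modulo~$p$. The first claim follows from the equations alone (if $p\mid t_0,t_1$ then $Q_1$ forces $p\mid t_2,t_3$ since $D$ is a nonresidue, and then $Q_2$ forces $p\mid t_4$). The second claim uses smoothness: if $t_0+At_1\equiv t_0+Bt_1\equiv 0\pmod p$, then $Q_2$ gives $t_2\equiv t_4\equiv 0$, and one checks directly from the Jacobian matrix~(\ref{Jacobian}) that its second row vanishes identically mod~$p$, contradicting regularity. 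Hence any Hensel lift of any smooth $\F_{\!p}$-point already has some quotient $q\in\Z_p^\times$, and $\ev_{\alp,p}(\bft)=0$ follows.

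So the refinement you propose---choosing a smooth point subject to additional congruence constraints---is unnecessary: the constraints hold for free at every smooth point. This is what eliminates the small-prime analysis you were worried about.
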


\begin{proof}
In the case where $p$ is split, this is clear and only requires the existence of some point $\bft\in S^{(D;A,B)}(\Q_p)$, which is guaranteed by Proposition 4.3 in \cite{JS}.\par
Let $p\neq 2$ be some inert prime. In the proof of Proposition 4.3 in \cite{JS}, we showed that there is a regular $\F_{\! p}$-rational point on the reduction of $S^{(D;A,B)}$. Considering the Jacobian (\ref{Jacobian}) at this point together with the system of equations defining $S^{(D;A,B)}$, we see that, for each $(t_0:\ldots:t_4)\in S^{(D;A,B)}(\Q_p)$ lifting it, at least one of $t_0,t_1$ and one of $t_0+At_1$, $t_0+Bt_1$ has to be a unit. The corresponding quotient $q$ then satisfies $(q,D)_p=1$ and hence $\ev_{\alp,p}(\bft)=0$.
\end{proof}

For an inert prime $p\neq 2$, we hence need to distinguish two cases. Either $S^{(D;A,B)}(\Q_p)\neq \emptyset$ and the Brauer class evaluates constantly to $0$, or there are $\Q_p$-rational points, but $\alp$ takes both values $0$ and $1/2$ on $S^{(D;A,B)}(\Q_p)$. We give some criteria for both cases in the next lemma. Let $\nu_p$ be the $p$-adic valuation on $\Q_p$. 

\begin{lem}\label{inert}
Let $p\neq 2$ be some inert prime and $\alp$ the Brauer class described above. Assume that $\nu_p(A)\leq \nu_p(B)$.
~\vspace{0.2cm}
\newline
i) If $\nu_p(A)$ is odd, then the evaluation of $\alp$ on $S^{(D;A,B)}(\Q_p)$ is constant if and only if $B$ is a square in $\Q_p$.\vspace{0.2cm}\\
\smallskip
ii) If $\nu_p(A)$ is even, then the evaluation of $\alp$ is non-constant if and only if $\nu_p(B-A)>\nu_p(A)$ and $BD$ is a square.\vspace{0.2cm}\\
In the case of constancy, the Brauer class takes the value $0$ on all of $S^{(D;A,B)}(\Q_p)$.
\end{lem}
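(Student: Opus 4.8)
The plan is to analyze the Hilbert symbol $(q,D)_p$ for the four admissible quotients $q \in \{t_0/(t_0+At_1),\, t_1/(t_0+At_1),\, t_0/(t_0+Bt_1),\, t_1/(t_0+Bt_1)\}$ over a $\Q_p$-point $\bft=(t_0:\dots:t_4)$, using the fact that $p$ is inert, so that $D$ is (up to squares) a non-square unit and $(u,D)_p=1$ iff $u$ is a norm from $\Q_p(\sqrt D)$, equivalently iff $\nu_p(u)$ is even when $u$ is a unit times $p^{\nu_p(u)}$ — more precisely $(u,D)_p=(-1)^{\nu_p(u)}$ for any $u$, since every unit is a norm from the unramified quadratic extension. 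Thus $\ev_{\alp,p}(\bft)=0$ iff $\nu_p(q)$ is even for the (any) defined non-zero quotient $q$, and the whole problem reduces to understanding the possible parities of the valuations $\nu_p(t_0),\nu_p(t_1),\nu_p(t_0+At_1),\nu_p(t_0+Bt_1)$ as $\bft$ ranges over $S^{(D;A,B)}(\Q_p)$. By scaling we may assume $\bft$ is primitive integral, so $\min\{\nu_p(t_i)\}=0$.

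First I would extract the arithmetic constraints from the two quadrics. Since $p$ is inert, $t_2^2-Dt_3^2$ and $t_2^2-Dt_4^2$ are norms from $\Q_p(\sqrt D)$, hence $\nu_p(t_0t_1)=\nu_p(t_2^2-Dt_3^2)$ is even and likewise $\nu_p\big((t_0+At_1)(t_0+Bt_1)\big)$ is even; moreover a norm that is a non-zero unit is automatically $1$ times a square unit only up to the norm group, but the key point is the valuation parity. So $\nu_p(t_0)+\nu_p(t_1)$ is even and $\nu_p(t_0+At_1)+\nu_p(t_0+Bt_1)$ is even. This already shows that the two quotients with the same denominator have valuations of the same parity, so $\ev_{\alp,p}$ is well-defined independent of $q$ (as asserted before the lemma), and it shows $\ev_{\alp,p}(\bft)=0$ iff $\nu_p(t_0)$ and $\nu_p(t_1)$ have the same parity (equivalently iff $\nu_p(t_0+At_1)$ and $\nu_p(t_0+Bt_1)$ do). Next I would run a short case analysis on $\min(\nu_p(t_0),\nu_p(t_1))$. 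If both $t_0,t_1$ are units the evaluation is $0$; the interesting configurations are when exactly one of $t_0,t_1$ is a non-unit, and then the relation $t_0t_1=t_2^2-Dt_3^2$ forces the other to carry an even valuation, so again $\ev=0$ unless $p\mid t_0,t_1$ simultaneously — but primitivity then forces $p\nmid t_2$ (hence $t_2^2-Dt_3^2$ a unit, contradicting $p\mid t_0t_1$) unless we are in a degenerate regime governed by $\nu_p(A),\nu_p(B)$. Carefully bookkeeping which of $t_0+At_1$, $t_0+Bt_1$ can drop in valuation, using $\nu_p(A)\le\nu_p(B)$, yields: non-constancy is possible exactly when the valuations of $A$ (resp.\ the valuation gap $\nu_p(B-A)$) allow $\bft$ with $\nu_p(t_0)\not\equiv\nu_p(t_1)\pmod 2$, and the norm condition on $t_2^2-Dt_4^2$ (resp.\ $t_2^2-Dt_3^2$) then forces the stated squareness condition on $B$ (resp.\ on $BD$). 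Concretely, in case i) ($\nu_p(A)$ odd) one solves for $t_1$ in terms of $t_0$ and finds that a $\Q_p$-point with $\ev=1/2$ exists iff $B\notin(\Q_p^\times)^2$; in case ii) ($\nu_p(A)$ even) the obstruction instead involves whether $BD$ is a square together with the gap condition $\nu_p(B-A)>\nu_p(A)$, since only then do $t_0+At_1$ and $t_0+Bt_1$ decouple in valuation.

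Finally, for the last sentence — that in the constant case the common value is $0$ — I would invoke Lemma \ref{eval0}: whenever $S^{(D;A,B)}(\Q_p)\neq\emptyset$ there is a point on which $\alp$ evaluates to $0$, so a constant evaluation must be the constant $0$. This reduces the final claim to a one-line consequence of the already-established Lemma \ref{eval0} and requires no further argument.

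\textbf{Main obstacle.} The delicate part is the case bookkeeping for which valuation configurations $(\nu_p(t_0),\nu_p(t_1))$ are actually realized by a $\Q_p$-point: one must produce explicit lifts (via Hensel's lemma applied to the one remaining quadratic constraint after fixing the "dominant" coordinates) to show the non-constant value $1/2$ is attained, and conversely rule it out by showing every norm equation $t_2^2-Dt_j^2=(\text{prescribed value})$ is solvable only when the squareness hypotheses on $B$ or $BD$ hold. Keeping straight the interaction of the two gap parameters $\nu_p(A)$ and $\nu_p(B-A)$ with the norm-group membership is where the real work lies; the Hilbert-symbol translation itself is routine.
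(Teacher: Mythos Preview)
Your overall strategy coincides with the paper's: translate $\ev_{\alp,p}$ into the parity of $\nu_p(q)$ via the unramified Hilbert symbol $(u,D)_p=(-1)^{\nu_p(u)}$, use that $t_2^2-Dt_3^2$ and $t_2^2-Dt_4^2$ are norms to force $\nu_p(t_0t_1)$ and $\nu_p\big((t_0+At_1)(t_0+Bt_1)\big)$ even, reduce via primitivity to $t_1$ a unit (indeed $t_1=1$), and then case-analyse; the final sentence via Lemma~\ref{eval0} is exactly what the paper does.

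There is, however, a concrete slip. You assert ``$\ev_{\alp,p}(\bft)=0$ iff $\nu_p(t_0)$ and $\nu_p(t_1)$ have the same parity'', but you have just shown $\nu_p(t_0)+\nu_p(t_1)$ is always even, so this condition is vacuous and would give $\ev\equiv 0$ always. The correct criterion is $\ev=0$ iff $\nu_p(t_0)\equiv\nu_p(t_0+At_1)\pmod 2$; after the paper's normalisation $t_1=1$ (so $\nu_p(t_0)$ even), the entire question becomes whether $\nu_p(t_0+A)$ can be odd. Your subsequent sentence ``if both $t_0,t_1$ are units the evaluation is $0$'' is likewise not justified: with $\nu_p(A)=0$ one can have $p\mid t_0+At_1$ and the parity of $\nu_p(t_0+A)$ is precisely what is at stake.

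The substantive content --- which you flag as the main obstacle but do not carry out --- is done in the paper in three explicit steps, each with a different mechanism: (1) $\nu_p(A),\nu_p(B)$ both odd: a change of variables $t_0=p^{a+b}y_0$, $t_i=p^{(a+b)/2}y_i$ ($i\geq 2$) produces a smooth $\F_{\!p}$-point lifting to $\ev=1/2$; (2) $\nu_p(A)$ even with $\nu_p(A)=\nu_p(B-A)$: a direct valuation argument shows $t_0+A$ and $t_0+B$ cannot both have odd valuation; (3) the remaining mixed cases: if $\ev=1/2$ occurs one reads off from the first quadric that $BD$ must be a square, and conversely one builds witness points by setting $t_2=0$ or $t_4=0$ and solving the surviving norm equation. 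These constructions are where the squareness of $B$ or $BD$ genuinely enters, and they require separate treatment according to the parity of $\nu_p(B-A)$; your plan does not yet distinguish these sub-cases.
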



Note that, by symmetry, Lemma \ref{inert} covers all cases of choices for integers $A$ and $B$. 

\begin{proof}
By Lemma \ref{eval0}, we already know that $\smash{S^{(D;A,B)}(\Q_p)\neq \emptyset}$ and that $\alp$ takes the value $0$ on some element in this set. Let $\smash{\bft\in S^{(D;A,B)}(\Z_p)}$ be a primitive solution, i.e.~one such that not all of the coordinates of $\bft$ are divisible by $p$. If $t_0$ and $t_1$ were both divisible by $p$, then also $t_2$ and $t_3$ by the first of the two equations of (\ref{eqn0}), and hence also $t_4$ by the second equation, which is a contradiction to the primitivity of the solution. Hence one of $t_0$ or $t_1$ is a unit. If $t_0$ is a unit and $t_1$ is divisible by $p$, then both of the factors $t_0+At_1$ and $t_0+Bt_1$ have even valuation, and hence $\ev_{\alp,p}(\bft)=0$. Therefore, the only points of interest to us are those where $t_1$ is a unit. Furthermore we note that the first equation in (\ref{eqn0}) implies that $t_0$ has even $p$-adic valuation. In the cases of non-constancy of the lemma, we need to find some element $\bft\in S^{(D;A,B)}(\Q_p)$ with $\nu_p(t_0+At_1)$ and $\nu_p(t_0+Bt_1)$ both being odd, and in the other cases we need to show that any primitive solution $\bft\in S^{(D;A,B)}(\Z_p)$ with $t_1$ a unit has the property that $\nu_p(t_0+At_1)$ and $\nu_p(t_0+Bt_1)$ are even. By homogeneity, we may in this case even assume that $t_1=1$. We prove the lemma in three steps, where we distinguish different cases (which are again different than in the formulation of the lemma).\vspace{0.2cm}\\
{\em First step:} We claim that if $\nu_p(A)$ and $\nu_p(B)$ are both odd, then the evaluation of $\alp$ on $S^{(D;A,B)}(\Q_p)$ is non-constant.\par
For this, let $\smash{a=\nu_p(A)}$ and $\smash{b=\nu_p(B)}$, and write $\smash{A=p^a u}$ and $\smash{B=p^b v}$ with units $u$ and $v$. We use the substitution $\smash{t_0=p^{a+b}y_0}$ and $t_1=y_1$ and $\smash{t_i=p^{\frac{a+b}{2}}y_i}$ for $i=2,3,4$. Then the system of equations (\ref{eqn0}) simplifies to 
\begin{equation*}
\begin{split}
y_0y_1&=y_2^2-Dy_3^2,\\
(p^by_0+uy_1)(p^ay_0+vy_1)&=y_2^2-Dy_4^2.
\end{split}
\end{equation*}
The reduction modulo $p$ is given by 
\begin{equation*}
\begin{split}
y_0y_1&=y_2^2-Dy_3^2,\\
uvy_1^2&=y_2^2-Dy_4^2.
\end{split}
\end{equation*}
We put $y_1=1$ in $\F_{\! p}$ and solve the second equation in $y_2,y_4$ over the finite field $\F_{\! p}$. Then we can solve the first equation in $y_0$ after choosing some arbitrary $y_3$. The Jacobian at this point has full rank since $y_1\neq 0$ and $2uv$ is a unit. Hence this solution lifts to a solution in $\Z_p$. The corresponding point $\bft$ has the property that $t_1$ has even and $t_0+At_1$ has odd valuation, and hence $\ev_{\alp,p}(\bft)=1/2$, as desired.\vspace{0.2cm}\\
{\em Second step:} Let now $\nu_p(A)$ be even and $\nu_p(A)=\nu_p(B-A)$. We then claim that the evaluation on $\smash{S^{(D;A,B)}(\Q_p)}$ is constantly zero.\par
As we noted at the beginning of the proof, it is sufficient to show that all reduced vectors $\bft\in S^{(D;A,B)}(\Z_p)$ with $t_1=1$ have the property that $\ev_{\alp,p}(\bft)=0$. Assume, to the contrary, that both $t_0+A$ and $t_0+B$ have odd $p$-adic evaluation. 
If we keep the notation $a=\nu_p(A)$, then we see that $t_0=-A+rp^{a+1}$ for some $r\in \Z_p$. However, then the term $t_0+B=B-A+rp^{a+1}$ has even valuation, a contradiction. Hence we conclude that the evaluation of the Brauer class $\alp$ on $S^{(D;A,B)}(\Q_p)$ is constant.\vspace{0.2cm}\\
{\em Third step:} Assume that $\nu_p(A)<\nu_p(B)$, and additionally that $\nu_p(A)$ is odd and $\nu_p(B)$ even. Or that $\nu_p(A) =\nu_p(B)$ are even and $\nu_p(B-A)>\nu_p(B)$.\par
We first aim to show that $\alp$ evaluates constantly in the case where $BD$ is not a square in~$\Q_p$. For this, it suffices to consider a primitive solution $\bft\in S^{(D;A,B)}(\Z_p)$ with $t_1=1$. If one of $t_0+A$ or $t_0+B$ has even valuation, then $\alp$ evaluates to zero at this point. Hence, we may assume that $t_0+A$ and $t_0+B$ both have odd $p$-adic valuation. Since $B$ has in both cases even $p$-adic valuation, we conclude that $t_0=-B+sp^{b+1}$ for some $s\in \Z_p$. For such a solution, we have $t_0+A=A-B+sp^{b+1}$. The second equation in (\ref{eqn0}) shows that $p^{b+2}$ divides $t_2^2-Dt_4^2$ and hence $\smash{p^{\frac{b+2}{2}}}$ divides both $t_2$ and $t_4$. The first equation in (\ref{eqn0}) simplifies to 
\begin{equation*}
-B+sp^{b+1}=t_2^2-Dt_3^2.
\end{equation*}
Since the $p$-adic valuation of $t_2^2$ is at least $b+2$, we observe that $BD$ must be a square in $\Q_p$. This is a contradiction to our assumption, and hence we have shown that $\alp$ evaluates constantly to zero on $S^{(D;A,B)}(\Q_p)$ in the case where $BD$ is a non-square.\par
We now claim that $\alp$ evaluates non-constantly if $BD$ is a square in $\Q_p$. For this we construct solutions $\bft\in S^{(D;A,B)}(\Q_p)$ with $\ev_{\alp,p}(\bft)=1/2$. We hence assume that $BD$ is a square in $\Q_p$ and then distinguish two subcases. In the first subcase, we assume that $\nu_p(B-A)$ is odd. We put $t_1=1$, $t_2=0$ and 
set $t_0=-B+sp^{b+1}$ for some $s\in \Z_p$ to be chosen later. The second equation of (\ref{eqn0}) simplifies to
\begin{equation*}
sp^{b+1}(A-B+sp^{b+1})=-Dt_4^2.
\end{equation*}
We now choose $s\in \Z_p$ of even $p$-adic valuation in a way that $\nu_p(A-B)<\nu_p(s)+b+1$ and such that $-Dsp^{b+1}(A-B)$ is a square in $\Q_p$. This is possible since $b+1$ and $\nu_p(A-B)$ are both odd. Then we can solve the second equation for $t_4$. The first equation of (\ref{eqn0}) simplifies to 
\begin{equation*}
-B+sp^{b+1}=-Dt_3^2.
\end{equation*}
Since we have assumed that $BD$ is a square in $\Q_p$, the same is true for $BD-Dsp^{b+1}$, and hence we can solve for $t_3$. Our constructed point $\bft\in S^{(D;A,B)}(\Z_p)$ has the property that $t_0+B$ has odd $p$-adic valuation and $t_1=1$, and hence $\ev_{\alp,p}(\bft)=1/2$.\par
For the last subcase that $\nu_p(B-A)$ is even, recall that we have that $\nu_p(A) =\nu_p(B)$ are even and $\nu_p(B-A)>\nu_p(B)$. We construct a point $\bft\in S^{(D;A,B)}(\Q_p)$ with $\ev_{\alp,p}(\bft)=1/2$ in the following way. Let $b=\nu_p(B)$. We set $t_1=1$ and $t_4=0$. Furthermore, let $t_0=-B+vp^{b+1}$ with $v$ a unit to be chosen later. Note that $\nu_p(B-A)\geq b+2$. The second equation of (\ref{eqn0}) then simplifies to 
\begin{equation*}
(A-B+vp^{b+1})(vp^{b+1})=t_2^2.
\end{equation*}
We can solve this for $t_2$ since $v^2p^{2(b+1)}$ is a square in $\Q_p$ and hence also the left hand side of the equation. Note that, in particular, we obtain that $p^{b+1}|t_2$. It remains to consider the first equation in (\ref{eqn0}), which simplifies to
\begin{equation*}
-B+vp^{b+1}=t_2^2-Dt_3^2.
\end{equation*}
Since $t_2$ is divisible by $p^{b+1}$, this is soluble for $t_3$ if and only if $BD$ is a square in $\Q_p$, which is satisfied by our assumption. We conclude that our constructed point $\bft\in S^{(D;A,B)}(\Q_p)$ satisfies $t_1=1$ and $t_0+B=vp^{b+1}$, which has odd $p$-adic valuation. Hence $\ev_{\alp,p}(\bft)=1/2$, as desired.
\end{proof}

\section{First asymptotics}\label{section4}

As before, let $D=\prod_{i=1}^r p_i$ be a factorization of $D$ into distinct primes $p_1<\ldots < p_r$. In this section, we fix some modulus $T$, which is composed of primes dividing the discriminant $D$, and two congruence classes $a$ and $b$ modulo $T$. We seek density estimates for the number of surfaces $S^{(D;A,B)}$ in the family (\ref{eqn0}) with $(A\mmod T) =a $ and $(B\mmod T)=b$ that are counterexamples to the Hasse principle explained by some algebraic Brauer-Manin obstruction. For this, we introduce the counting function $\RBR (N;T,a,b)$, which counts the number of $|A|,|B|\leq N$ with $(A\mmod T) =a $ and $(B\mmod T)=b$ such that $S^{(D;A,B)}$ is a counterexample to the Hasse principle explained by some Brauer-Manin obstruction. 

\begin{lem}\label{Brauer}
Assume that $S^{(D;A,B)}$ is non-singular, has an adelic point and that neither of the expressions $-AB$ or $D((A+B-1)^2-4AB)$ is a square in $\bbQ$. Then $\Br (S^{(D;A,B)})/\Br (\Q)$ is isomorphic to $\bbZ/2\bbZ$ or $0$.
\end{lem}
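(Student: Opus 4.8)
The plan is to identify $\Br(S^{(D;A,B)})/\Br(\Q)$ with a Galois cohomology group and then to cut it down using the classification of Galois actions on the Picard group of a del Pezzo quartic. Write $X=S^{(D;A,B)}$. Since $X$ is geometrically rational we have $\Br\overline{X}=0$, so (as recalled in the introduction) $\Br X=\Br_1X$, and the Hochschild--Serre spectral sequence gives $\Br X/\Br\Q\cong H^1(\Gal(\overline{\Q}/\Q),\Pic\overline{X})$. For any del Pezzo surface of degree four the Galois action on $\Pic\overline{X}\cong\Z^6$ factors through the Weyl group $W(D_5)$, and it is classical that $H^1(\Gamma,\Pic\overline{X})$ is isomorphic to $0$, $\Z/2\Z$ or $(\Z/2\Z)^2$ for every subgroup $\Gamma\subseteq W(D_5)$ (see \cite{Ma}; this is also implicit in the analysis of \cite{JS}). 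Hence the entire content of the lemma is to exclude the case $H^1\cong(\Z/2\Z)^2$.

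To get at the Galois action I would use the pencil of quadrics through $X$. With $M_1,M_2$ the Gram matrices of $Q_1,Q_2$ one computes
\begin{equation*}
\det(\lambda M_1+\mu M_2)= D^2\,\lambda\,\mu\,(\lambda+\mu)\bigl(AB\,\mu^2-\tfrac14(\lambda+(A+B)\mu)^2\bigr),
\end{equation*}
a binary quintic whose five roots are distinct exactly when $X$ is smooth. The five singular members of the pencil are therefore the rank-four quadrics $Q_1$, $Q_2$, $Q_1-Q_2$ and the pair $Q_{\theta_1},Q_{\theta_2}$ attached to the roots of $AB\mu^2-\tfrac14(\lambda+(A+B)\mu)^2$; the latter two are defined over $\Q(\sqrt{AB})$ and are interchanged by $\Gal(\overline{\Q}/\Q)$ unless $AB$ is a square. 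Diagonalising each quaternary form, one reads off the square class of its two rulings: $d_{Q_1}\equiv d_{Q_2}\equiv D$, $d_{Q_1-Q_2}\equiv G(A,B)$ with $G(A,B)=(A+B-1)^2-4AB$, and $d_{Q_{\theta_1}}d_{Q_{\theta_2}}\equiv(A-B)^2\,G(A,B)\equiv G(A,B)$ (as square classes, using that $\theta_1+\theta_2=-2(A+B)$ and $\theta_1\theta_2=(A-B)^2$). These data, together with the permutation action of $\Gal(\overline{\Q}/\Q)$ on the five singular quadrics, determine $\Gamma$ and hence $H^1$.

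The remaining step is a case analysis. If $AB$ is not a square, the image of $\Gal(\overline{\Q}/\Q)$ in the permutation group of the five singular quadrics contains a transposition, and one checks from the classification that for such $\Gamma$ one has $\dim_{\F_2}H^1(\Gamma,\Pic\overline{X})\leq1$, so $(\Z/2\Z)^2$ cannot occur here. If $AB$ is a square (whence $-AB$ is automatically a non-square, $AB$ being nonzero), the quintic splits completely over $\Q$, $\Gal(\overline{\Q}/\Q)$ acts through the subgroup $(\Z/2\Z)^4\subset W(D_5)$ of ruling interchanges, and $\Br X/\Br\Q$ is then governed by the square relations among $d_{Q_1},\dots,d_{Q_{\theta_2}}$: there is at most the class $\alpha$ (coming from the relation $d_{Q_1}d_{Q_2}\equiv1$) together with one further potential class, which would require an even subset $S\subseteq\{Q_1,Q_2,Q_1-Q_2,Q_{\theta_1},Q_{\theta_2}\}$, independent of $\{Q_1,Q_2\}$, with $\prod_{i\in S}d_i\in(\Q^*)^2$. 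Tracking these possibilities, and using $d_{Q_{\theta_1}}d_{Q_{\theta_2}}\equiv G(A,B)$ together with the individual values $d_{Q_{\theta_i}}\equiv-(1+AB)\,\theta_i(\theta_i+1)$, one finds that such an $S$ exists only when $D\,G(A,B)$ or $-AB$ is a square in $\Q$. Both are excluded by hypothesis, so $\Br X/\Br\Q\not\cong(\Z/2\Z)^2$, which proves the lemma.

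I expect the last paragraph to be the real obstacle: one has to be precise about which collections of rulings of the five singular quadrics actually descend to Brauer classes on $X$ once the discriminant quintic splits, which means pinning down exactly the relationship between the ruling square classes and $\Br X/\Br_0X$ for a del Pezzo quartic, and then carrying out the $\Q$-diagonalisation of $Q_{\theta_1},Q_{\theta_2}$ with all signs in the case $AB\in(\Q^*)^2$. A cleaner alternative is to quote directly the determination of $\Br S^{(D;A,B)}$ carried out in \cite{JS}: there the Brauer group is shown to reduce to $\Z/2\Z$ generated by $\alpha$ away from an explicit exceptional locus, and that locus is precisely where $-AB$ or $D((A+B-1)^2-4AB)$ becomes a square.
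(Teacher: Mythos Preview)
Your overall strategy is sound and the preliminary computations are correct, but the argument as written has a genuine gap, one that you yourself flag. The two places where you write ``one checks from the classification'' and ``tracking these possibilities, one finds'' are precisely where the work lies, and neither is carried out. In the case $AB\notin(\Q^*)^2$ you assert that a transposition in the permutation action on the five singular quadrics forces $\dim_{\F_2}H^1\le 1$; this is true but needs a reference or a concrete argument (it does not follow from generalities about $W(D_5)$ alone). In the case $AB\in(\Q^*)^2$ you have not actually diagonalised $Q_{\theta_1},Q_{\theta_2}$ to extract their individual ruling classes, and the ``even subset'' bookkeeping that pins down the second potential Brauer class is only asserted, not done. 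The formula $d_{Q_{\theta_i}}\equiv -(1+AB)\,\theta_i(\theta_i+1)$ appears without derivation, and the passage from the list of $d_i$'s to the conclusion that only $-AB$ or $D\,G(A,B)$ being a square can produce a second class is not justified. So as it stands this is a plausible outline rather than a proof.

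The paper avoids all of this by quoting Theorem~3.4 of V\'arilly-Alvarado and Viray \cite{VAV}, which gives an if-and-only-if criterion: $\Br(S)/\Br(\Q)\cong(\Z/2\Z)^2$ exactly when the degeneracy scheme $\calS$ contains three $\Q$-points whose rank-four quadrics have discriminants that are non-squares and agree up to square factors. Once one has this, the proof is a single computation: the characteristic polynomial $\det(\lambda Q_1+\mu Q_2)$ factors as $\lambda\mu(\lambda+\mu)$ times the quadratic you wrote down, so the three $\Q$-rational points have discriminants $D$, $D$, and $\frac14 D^2 G(A,B)$, and these cannot all lie in one non-trivial square class once $D\,G(A,B)$ is assumed not to be a square. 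Your route via $H^1(\Gamma,\Pic\overline{X})$ and the $W(D_5)$-classification is more self-contained in principle, but in practice it re-derives a special case of what \cite{VAV} packages, and at higher cost. If you want to make your approach rigorous, the cleanest fix is exactly the one you suggest at the end: cite \cite{VAV} (or the relevant computation in \cite{JS}) rather than reproducing the $W(D_5)$ case analysis by hand.
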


\begin{proof}
Generally, the Brauer group of $S^{(D;A,B)}$ can be either isomorphic to $0$ or $\Z/2\Z$ or $(\Z/2\Z)^2$. Let $\calS\subset \bbP^1$ be the degeneracy locus of the pencil of the two quadratic forms defining $S^{(D;A,B)}$. In particular, $\calS$ is a degree five subscheme of $\bbP^1$. Since $S^{(D;A,B)}(\A_\Q)\neq \emptyset$, we may apply Theorem 3.4 in \cite{VAV} (see also \cite{CTSSD} and \cite{Wi1}). This includes the statement that $\Br (S^{(D;A,B)})/\Br (\Q)\cong (\Z/2\Z)^2$ if and only if $\calS$ has three distrinct points $s_0,s_1,s_2\in \calS(\Q)$ such that the corresponding discriminants $D_{s_0},D_{s_1},D_{s_2}$ of the rank four quadrics are non-squares in $\Q$ and coincide up to square factors.\par
Hence let us compute the characteristic polynomial $P(\lam,\mu)=\det (\lam Q_1+\mu Q_2)$ for $Q_1$ and $Q_2$ the two quadratic forms in (\ref{eqn0}) and obtain
\begin{equation*}
\begin{split}
P(\lam,\mu)&= (AB\mu^2-\frac{1}{4}(\lam+\mu(A+B))^2)(\lam+\mu)(-D\lam)(-D\mu)\\
&=-\frac{1}{4}D^2\left(\mu^2(A-B)^2+2\lam\mu (A+B)+\lam^2\right)(\lam+\mu)\lam \mu.
\end{split}
\end{equation*}
The two points of $\calS$ corresponding to the quadratic factor are defined over $\Q$ if and only if the discriminant of the quadratic form $\mu^2(A-B)^2+2\lam\mu (A+B)+\lam^2$ is a square in $\Q$. This is given by 
\begin{equation*}
-(2^2(A+B)^2-4(A-B)^2)=-16AB,
\end{equation*}
and hence equals $-AB$ up to square factors. We conclude that under the assumptions of the lemma, exactly three points $s_0,s_1,s_2$ of $\calS$ are defined over $\Q$. The corresponding rank four quadrics are those in (\ref{eqn0}) and the quadric
\begin{equation*}
t_0^2+(A+B-1)t_0t_1+ABt_1^2=Dt_3^2-Dt_4^2.
\end{equation*}
The corresponding discriminants are given up to square factors by $D_{s_0}=D$, $D_{s_1}=D$ and 
\begin{equation*}
D_{s_2}=-D^2\det\left(\begin{array}{cc} 1 & \frac{1}{2}(A+B-1)\\ \frac{1}{2}(A+B-1) &AB\end{array}\right)= \frac{1}{4}D^2((A+B-1)^2-4AB).
\end{equation*}
By the assumption of the lemma, the discriminant $D_{s_2}$ does not coincide with $D_{s_0}$ or $D_{s_1}$ up to square factors, and hence Theorem 3.4 in \cite{VAV} implies that the Brauer group cannot be isomorphic to $(\Z/2\Z)^2$.
\end{proof}


We claim that the contribution of those $A$ and $B$, for which Lemma \ref{Brauer} does not apply, is negligible. 

\begin{lem}\label{lem3}
Let $\calQ$ be the set of squares in $\Q$. One has the bounds
\begin{equation*}
\sharp\{|A|,|B|\leq N: -AB\in \calQ\}\ll_\eps N^{1+\eps},
\end{equation*}
and
\begin{equation*}
\sharp\{|A|,|B|\leq N: D((A+B-1)^2-4AB)\in \calQ\}\ll_\eps D^\eps N^{1+\eps}.
\end{equation*}
\end{lem}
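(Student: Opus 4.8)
The plan is to prove both bounds by counting lattice points on (affine pieces of) conics and curves, using the standard divisor-function estimate $d(n) \ll_\eps n^\eps$ together with the fact that a quadratic equation in one variable has at most two solutions.

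\smallskip

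\emph{First bound.} For the count of $|A|,|B|\le N$ with $-AB \in \calQ$, first dispose of the degenerate case $AB=0$, which contributes $O(N)$. Otherwise write $-AB = m^2$ with $m \ge 1$, $m \le N$. For each fixed nonzero value of $m$, the number of pairs $(A,B)$ with $AB = -m^2$ is at most $d(m^2) \ll_\eps m^\eps \ll_\eps N^\eps$, since $A$ must be a divisor of $m^2$ (up to sign) and then $B$ is determined. Summing over $1 \le m \le N$ gives $\ll_\eps N^{1+\eps}$, as claimed. (One could equally fix $A$ and note that $B = -A u^2/\cdots$, but the divisor-counting version is cleanest.)

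\smallskip

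\emph{Second bound.} For the count with $D((A+B-1)^2 - 4AB) \in \calQ$, set $u = A+B$ and $w = AB$; the quantity inside is $D((u-1)^2 - 4w)$. It is more convenient to keep the variables $A,B$ and substitute $A+B-1 = s$, so that $(A+B-1)^2 - 4AB = s^2 - 4AB$ and we want $D(s^2-4AB) = n^2$ for some integer $n$ with $0 \le n \ll D^{1/2} N$. For each fixed $A$ with $1 \le |A| \le N$ and each such $n$, the equation $D s^2 - 4DAB = n^2$ becomes, after fixing also $n$, a \emph{linear} equation in $B$ once $s$ is fixed, but $s = A+B-1$ is itself a function of $B$; so instead fix $A$ and $n$, and view $D(A+B-1)^2 - 4DAB - n^2 = 0$ as a quadratic in $B$, which has at most two solutions $B$ unless the quadratic is identically zero (which requires $DA = 0$ from the leading coefficient, excluded). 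Thus for each of the $O(N)$ choices of $A$ and each of the $O(D^{1/2}N)$ choices of $n$ we get $O(1)$ pairs, giving $\ll D^{1/2} N^2$ — which is too weak. The fix is to fix $n$ \emph{and} the value $v := D(A+B-1)^2 - 4DAB$; since $v = n^2$ ranges over at most $O(D^{1/2}N)$ perfect squares but we should instead argue as follows: the condition says $D(s^2 - 4AB)$ is a square, i.e. $s^2 - 4AB = D w^2$ for some integer $w \ge 0$ with $w \ll N$ (writing $n = Dw$ is wrong in general; rather $D \mid n^2$ and $D$ squarefree forces $D \mid n$, and this is exactly where $D$ squarefree in Theorem~\ref{theo1} is used — but in Lemma~\ref{lem3} $D$ is arbitrary, so instead write $D(s^2-4AB)=n^2$ directly). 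Fix $A$ and fix $s^2 - 4AB$: once $A$ and the product $4AB$ are fixed, $B$ is determined, and $4AB = s^2 - (s^2-4AB)$ where $s=A+B-1$. Cleaner: fix $A$ (giving $O(N)$ choices) and fix $B$... no. The genuinely clean route is: the locus $D((A+B-1)^2-4AB) = n^2$ in the $(A,B)$-plane, for fixed $n$, is a conic; it is nondegenerate for all but $O(1)$ values of $n$, so carries $\ll_\eps (DN)^\eps N$ integral points of height $\le N$ by the standard bound for points on conics, and summing over the $\ll D^{1/2}N$ relevant $n$ would again be too lossy. Therefore the right move, and the one I expect the authors use, is to fix $A$ and $B$ only through $A$: regard $n^2 = D((A+B-1)^2-4AB)$ and complete the square in $B$: $(A+B-1)^2 - 4AB = B^2 + (2A-2)B + (A-1)^2 - 4AB = B^2 - (2A+2)B + (A-1)^2 = (B-(A+1))^2 - 4A$. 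So the condition becomes
\begin{equation*}
D\big((B-A-1)^2 - 4A\big) = n^2.
\end{equation*}
Now for \emph{fixed} $A \ne 0$, set $m = B - A - 1$; we need $D m^2 - n^2 = 4DA$, a Pell-type equation $n^2 - D m^2 = -4DA$ with $|m| \le 2N$. Since $D$ is not a perfect square, $n^2 - Dm^2$ is a nondegenerate binary quadratic form, and the number of representations of the fixed nonzero integer $-4DA$ by it with $|m|, |n| \ll D^{1/2}N$ is $\ll_\eps |4DA|^\eps \cdot \log(DN) \ll_\eps (DN)^\eps$ — this is the standard bound for the number of lattice points on a Pellian conic of bounded height (each representation lies in one of $O(\log)$ orbits under the unit and within an orbit the height constraint leaves $O(1)$, or one simply invokes that the number of divisors / representations is $\ll (DN)^\eps$). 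Summing over the $\le 2N+1$ values of $A$ gives $\ll_\eps D^\eps N^{1+\eps}$, which is exactly the claimed bound.

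\smallskip

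\emph{Main obstacle.} The only real subtlety is the second bound: one must complete the square in $B$ to turn the quartic-looking condition into the Pell form $n^2 - Dm^2 = -4DA$ with $A$ fixed, and then invoke the $(\cdot)^\eps$-bound for the number of bounded-height solutions of a Pell equation with fixed nonzero right-hand side, being careful that the $D$-dependence of that bound is only $D^\eps$ (not, say, $D^{1/2}$). That the form $n^2 - Dm^2$ is anisotropic over $\Q$ — equivalently that $D$ is not a perfect square, which is a standing hypothesis on $D$ — is what makes the right-hand side a \emph{nonzero} fixed integer for each fixed $A \ne 0$ and hence makes the representation count $\ll_\eps (DN)^\eps$; the degenerate $A = 0$ (and $A = B$, $G(A,B)=0$) cases contribute only $O(N)$ and are absorbed.
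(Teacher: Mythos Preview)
Your final argument is correct and is essentially the same as the paper's proof: both reduce the second count to bounding the number of bounded-height representations of a fixed nonzero integer by the Pell form $x^2 - Dy^2$, and then sum over the remaining parameter. You complete the square in $B$ to obtain $n^2 - Dm^2 = -4DA$ and sum over $A$; the paper uses the identity $(A+B-1)^2 - 4AB = (A-B)^2 - 2(A+B) + 1$, sets $u = A-B$, $v = 2(A+B)-1$, and sums over $v$ the representation count $\rho(-Dv)$, citing Pleasants for the bound $\rho_B(n) \ll_\eps |n|^\eps B^\eps$. The two substitutions are equivalent up to a linear change of variables, and the key input --- the $(\cdot)^\eps$ bound on bounded-height solutions to a fixed Pell equation, with the hypothesis that $D$ is not a perfect square --- is identical.

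That said, your write-up should be cleaned up: the several abandoned attempts (fixing $n$, counting points on conics, etc.) obscure the actual proof. State the first bound in one line via the divisor estimate, then for the second bound go directly to the completed-square identity $(A+B-1)^2 - 4AB = (B-A-1)^2 - 4A$, fix $A \neq 0$, and invoke the Pell representation bound. You may want to cite a concrete reference for that bound (the paper uses \cite{Ple75}, Lemma~4.3) rather than the informal ``orbits under units'' sketch.
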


\begin{proof}
The first estimate is clear since
\begin{equation*}
\sharp\{|A|,|B|\leq N: -AB\in \calQ\}\ll \sum_{|z|\leq N}d(z^2)\ll_\eps N^{1+\eps}.
\end{equation*}
For the second bound in the lemma, we rewrite the quadratic form 
\begin{equation*}
(A+B-1)^2-4AB= (A-B)^2-2(A+B)+1.
\end{equation*}
Hence, the second counting function in the lemma is bounded by
\begin{equation}\label{lem3eqn1}
\begin{split}
\sharp\{|A|,|B|\leq N: D((A+B-1)^2-4AB)\in \calQ\}&\ll \sharp\{|u|,|v|\leq 4N+1:\ D(u^2-v)=x^2\}\\ &\ll \!\!\sum_{|v|\leq 4N+1} \!\!\!\!\!\sharp\{ |u|\leq 4N+1: -Dv=x^2-Du^2\}.
\end{split}
\end{equation}
We note that $x^2-Du^2$ is a norm form and introduce the representation function
\begin{equation*}
\rho_B(n)=\sharp\{|u|,|x|\leq B:\ n=x^2-Du^2\}.
\end{equation*}
By Lemma 4.3 in \cite{Ple75}, we have the upper bound $\rho_B(n)\ll_\eps |n|^\eps B^\eps$. Hence we may now bound the counting function in (\ref{lem3eqn1}) by
\begin{equation*}
\sharp\{|A|,|B|\leq N: D((A+B-1)^2-4AB)\in \calQ\}\ll\!\! \sum_{|v|\leq (4N+1)}\!\!\!\!\!\!\rho_{8D(N+1)}(-Dv)\ll_\eps D^{\eps}N^{1+\eps},
\end{equation*}
which completes the proof of the lemma.
\end{proof}

Recall that, if $\smash{\Br (S^{(D;A,B)})/\Br(\Q)\cong \Z/2\Z}$ and $\smash{S^{(D;A,B)}}$ has an adelic point, then there is a Brauer-Manin obstruction to the Hasse principle if and only if the non-trivial Brauer class evaluates constantly at each place and takes the value $\smash{\frac{1}{2}}$ an odd number of times. We show next that, for $T$ sufficiently large, the surfaces showing this behaviour may be characterized entirely in terms of the residue classes $(A \mmod T)$ and $(B\mmod T)$.\par

\begin{lem}\label{lem1}
There is a set of non-zero polynomials $G_i\in \Z[X,Y]$, $1\leq i\leq m$, such that the following holds. Let $p\mid D$ be a prime not equal to $2$ with $p^2\nmid D$, and assume, for certain $A,B\in \Z$, that $p^l\nmid G_i(A,B)$ for all $1\leq i\leq m$.\vspace{0.2cm}\\
a) Then the local solubility $S^{(D;A,B)}(\Q_p)\neq \emptyset$ only depends on $A$ and $B$ modulo $p^l$.\vspace{0.2cm}\\
b) Furthermore, the set of values taken by the evaluation of $\alp$ on $S^{(D;A,B)}(\Q_p)$, as described in Section \ref{section3}, only depends on $A$ and $B$ modulo $p^l$.\vspace{0.2cm}\\
Moreover, one may take $m=4$ and $G_1(X,Y)=G(X,Y)^8= (X^2-2XY+Y^2-2X-2Y+1)^8$, $G_2(X,Y)=X^8$, $G_3(X,Y)=Y^8$ and $G_4(X,Y)=(X-Y)^8$.
\end{lem}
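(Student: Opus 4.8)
The plan is to show that, under the stated non-divisibility conditions $p^l \nmid G_i(A,B)$ with $G_1 = G^8$, $G_2 = X^8$, $G_3 = Y^8$, $G_4 = (X-Y)^8$, the value of the smallest exponent $l'$ with $p^{l'+1} \mid A,B,A-B,G(A,B)$ is bounded, and then invoke Lemma \ref{local3} together with the criteria of Lemma \ref{inert}. Concretely, if $p^l \nmid X^8(A,B) = A^8$, then $\nu_p(A) \leq \lfloor (l-1)/8 \rfloor$, and likewise $\nu_p(B)$, $\nu_p(A-B)$, $\nu_p(G(A,B))$ are all at most $\lfloor (l-1)/8 \rfloor$. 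Setting $l_0 := \lfloor (l-1)/8 \rfloor$, we have $p^{l_0+1} \nmid A,B,A-B,G(A,B)$, so with the notation of Section \ref{local}, Lemma \ref{local3} applies with this $l_0$ in place of $l$.

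\smallskip
For part a): Lemma \ref{local3} says that for such $A,B$ one has $S^{(D;A,B)}(\Q_p)\neq\emptyset$ if and only if $(A,B) \bmod p^{8l_0+1}$ lies in $\calR_{l_0}(p)$. Since $8l_0 + 1 \leq l$ (as $l_0 = \lfloor(l-1)/8\rfloor$ gives $8l_0 \leq l-1$), the residue $(A,B) \bmod p^{8l_0+1}$ is determined by $(A,B) \bmod p^l$; moreover the hypothesis $p^{l_0+1}\nmid A,B,A-B,G(A,B)$ is itself a condition on $(A,B) \bmod p^{l_0+1}$, hence on $(A,B)\bmod p^l$. Thus $S^{(D;A,B)}(\Q_p)\neq\emptyset$ depends only on $A,B$ modulo $p^l$, proving a).

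\smallskip
For part b): I would argue that the value set of $\ev_{\alp,p}$ on $S^{(D;A,B)}(\Q_p)$ is governed entirely by the quantities $\nu_p(A)$, $\nu_p(B)$, $\nu_p(A-B)$ and by whether $B$, resp. $BD$, is a square in $\Q_p$ — this is exactly the content of Lemma \ref{inert} (after using the symmetry in $A,B$ to reduce to $\nu_p(A)\leq\nu_p(B)$). Each of these data is determined by $(A,B)$ modulo a bounded power of $p$: the valuations $\nu_p(A),\nu_p(B),\nu_p(A-B)$ are all $\leq l_0$ and so are read off from $(A,B)\bmod p^{l_0+1}$; and whether a $p$-adic number of bounded valuation $\leq l_0$ is a square is determined by its class modulo $p^{l_0+2}$ (for $p\neq 2$, a unit times $p^{2k}$ is a square iff the unit is a quadratic residue mod $p$, and $\nu_p$ even; a unit times $p^{2k+1}$ is never a square — all of this is visible mod $p^{l_0+2}$). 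Since $l_0 + 2 \leq l$ whenever $l \geq 2$ — and for $l = 1$ the hypothesis $p^l \nmid A^8$ forces $p\nmid A$, so $l_0 = 0$ and $\nu_p(A)=\nu_p(B)=\nu_p(A-B)=\nu_p(G(A,B))=0$, forcing the "constant" alternative of Lemma \ref{inert} regardless — we conclude the value set depends only on $A,B\bmod p^l$. Combined with part a) (to know the surface is $\Q_p$-soluble in the first place), this gives b).

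\smallskip
The main obstacle I anticipate is bookkeeping the exponents carefully: one must check that the power $p^{8l_0+1}$ appearing in Lemma \ref{local3}, and the power $p^{l_0+2}$ needed to detect $p$-adic squareness, are both at most $p^l$, and that the chosen polynomials $G_i$ with their eighth powers give enough room ($l_0 = \lfloor(l-1)/8\rfloor$ makes $8l_0 + 1 \leq l$ precisely, and then $l_0 + 2 \leq l$ as soon as $l\geq 2$, with $l=1$ handled separately as above). A minor secondary point is to verify that the degenerate/edge cases of Lemma \ref{inert} (e.g. $\nu_p(A) = \nu_p(B)$ with $\nu_p(B-A)$ exactly equal to, or strictly greater than, $\nu_p(B)$) are indeed distinguished by $(A,B)$ mod $p^{l_0+1}$, which they are since all the relevant valuations are $\leq l_0$.
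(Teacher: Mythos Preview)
Your argument for part a) is correct and matches the paper's approach: the hypothesis $p^l\nmid G_i(A,B)$ with the $G_i$ being eighth powers bounds $\nu_p(A),\nu_p(B),\nu_p(A-B),\nu_p(G(A,B))$ by $l_0=\lfloor(l-1)/8\rfloor$, and then Lemma~\ref{local3} applied with this $l_0$ shows solubility depends only on $(A,B)\bmod p^{8l_0+1}$, hence on $(A,B)\bmod p^l$.

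Your argument for part b), however, has a genuine gap. You invoke Lemma~\ref{inert} to decide the value set of $\ev_{\alp,p}$, but Lemma~\ref{inert} is stated and proved only for \emph{inert} primes $p$, whereas the hypothesis of the present lemma is $p\mid D$, i.e.\ $p$ is \emph{ramified}. The proof of Lemma~\ref{inert} relies throughout on $D$ being a $p$-adic unit (so that $t_2^2-Dt_3^2$ is anisotropic modulo $p$, norms from the unramified quadratic extension are exactly the elements of even valuation, etc.); none of this structure is available when $p\mid D$. Indeed the paper remarks explicitly that for ramified primes ``the situation remains to some extent unsolved'' and that Lemma~\ref{lem1} is introduced precisely to \emph{circumvent} the lack of an analogue of Lemma~\ref{inert}.

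The paper's argument for b) is instead a direct continuity argument. For a primitive $\bft\in S^{(D;A,B)}(\Z_p)$ one first observes (as in the proof of Lemma~\ref{local2}, which does use $p\mid D$) that $t_1$ is a unit. Next, if both $t_0+At_1$ and $t_0+Bt_1$ were divisible by $p^l$, then $p^l\mid (A-B)t_1$ and hence $p^l\mid A-B$, contradicting $p^l\nmid(A-B)^8$. Thus one of the quotients $q=t_1/(t_0+At_1)$ or $q=t_1/(t_0+Bt_1)$ has numerator and denominator of $p$-adic valuation at most $l-1$, so the Hilbert symbol $(q,D)_p$---and therefore $\ev_{\alp,p}(\bft)$---is determined by $\bft$ and $A,B$ modulo $p^l$. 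Conversely, by Lemma~\ref{local2} and Hensel's lemma every primitive solution modulo $p^l$ lifts to a $\Q_p$-point. Hence the image of $\ev_{\alp,p}$ is computable from $(A,B)\bmod p^l$ alone.
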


The proof of Lemma \ref{lem1} is similar to the proof of Lemma \ref{local2} and Lemma \ref{local3}.

\begin{proof}
Let $G_i(X,Y)$ for $1\leq i\leq 4$ be as chosen above, and assume that $p^l\nmid G_i(A,B)$ for $1\leq i\leq 4$. Lemma \ref{local3} now implies that local solubility of $S^{(D;A,B)}$ over $\Q_p$ only depends on $(A,B)$ modulo~$p^l$. This proves the first part of the lemma.\par
Next we need to understand the evaluation of $\alp$ on $S^{(D;A,B)}(\Q_p)$. For this let $\bft\in S^{(D;A,B)}(\Q_p)$ be a point, which we may assume to have coordinates in $\Z_p$ in reduced form. Then, as shown in the proof of Lemma \ref{local2}, we have that $\smash{p\nmid t_1}$. Furthermore, we claim that $\smash{p^l\nmid t_0+At_1}$ or $\smash{p^l\nmid t_0+Bt_1}$. Indeed, otherwise we would have $p^l| A-B$, which is a contradiction to $p^l\nmid (A-B)^8$. Hence the $p$-adic valuation of $t_1(t_0+At_1)$ or $t_1(t_0+Bt_1)$ is at most $l-1$. Therefore, the evaluation of $(q,D)_p$, with $q=t_1/(t_0+At_1)$ or $q=t_1/(t_0+Bt_1)$, only depends on $\bft$ and $A,B$ modulo $p^l$. Moreover, any primitive solution modulo $p^l$ lifts according to Lemma \ref{local2} and Hensel's Lemma (for example in the form of Proposition 5.21 in \cite{Greenberg}) to a solution in $S^{(D;A,B)}(\Q_p)$. In order to find all possible values of $\alp$ on $S^{(D;A,B)}(\Q_p)$, one hence only needs to consider $A$ and $B$ modulo $p^l$ and evaluate $\alp$ on all primitive solutions modulo $p^l$. The result only depends on $A$ and $B$ modulo $p^l$.
\end{proof}

In the following, we use the notation $G_j(X,Y)$, $1\leq j\leq 4$, for the four polynomials specified at the end of Lemma \ref{lem1}.\par
\medskip
{\bf Notation}. With the conclusions of Lemma \ref{lem1} in mind, for a vector $\bfl=(l_1,\ldots, l_r)\in \N^r$, we define $$\calH(\bfl)\subset \left(\Z/\prod_{i=1}^r p_i^{l_i}\Z\right)^2$$ to be the set of all pairs $(a,b)$ modulo $\prod_{i=1}^r p_i^{l_i}$ such that\\
a) for each $1\leq i\leq r$, there is some $1\leq j\leq m$ with $p_i^{l_i-1}|G_j(a,b)$,\\
b) for each $1\leq i\leq r$ and $1\leq j\leq m$, one has $p_i^{l_i}\nmid G_j(a,b)$,\\
c) one has $S^{(D;A,B)}(\Q_{p_i})\neq \emptyset$ for $(A\mmod \prod_{i=1}^r p_i^{l_i})=a$, $(B \mmod \prod_{i=1}^r p_i^{l_i})=b$, and all $1\leq i\leq r$, and\\
d) the Brauer class $\alp$ described in section \ref{section3} evaluates constantly at all places $p_i$ and takes the value $1/2$ at an odd number of them.\\

Before we state a lemma, which we use to characterize surfaces $S^{(D;A,B)}$ in our family (\ref{eqn0}) that are counterexamples to the Hasse principle explained by some Brauer-Manin obstruction, we give an easy upper bound for the cardinality of the set $\calH(\bfl)$.

\begin{lem}\label{lem1b}
There is a positive real constant $\tet_0$, such that
\begin{equation*}
\sharp\calH(\bfl)\ll_D \prod_{i=1}^r p_i^{2(l_i-\tet_0 )}.
\end{equation*}
More precisely, the bound is valid for any $\tet_0<1/16$. 
\end{lem}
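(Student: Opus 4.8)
The plan is to bound $\sharp\calH(\bfl)$ by passing to the individual prime factors and using property a) in the definition of $\calH(\bfl)$, which forces a high power of one of the polynomials $G_j$ to divide $(a,b)$. Since the ambient group $\bigl(\Z/\prod_i p_i^{l_i}\Z\bigr)^2$ splits as a product over the primes $p_i\mid D$ by the Chinese Remainder Theorem, and properties a) and b) are conditions on each $p_i$ separately, it suffices to estimate, for each fixed $i$, the number of pairs $(a,b)$ modulo $p_i^{l_i}$ such that $p_i^{l_i-1}\mid G_j(a,b)$ for some $1\leq j\leq 4$; the product of these bounds over $i$ will give the claim. (Conditions c) and d) only cut the set down further, so they can be dropped for an upper bound.)

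For a single prime $p=p_i$ and a single $j$, I would count pairs $(a,b)$ modulo $p^{l}$ (writing $l=l_i$) with $p^{l-1}\mid G_j(a,b)$. Each of the four polynomials is an eighth power, $G_j=g_j^8$ with $g_j\in\{X^2-2XY+Y^2-2X-2Y+1,\,X,\,Y,\,X-Y\}$, so $p^{l-1}\mid g_j(a,b)^8$ is equivalent to $\nu_p(g_j(a,b))\geq \lceil (l-1)/8\rceil \geq (l-1)/8$. Each $g_j$ is a non-constant polynomial that, viewed as a polynomial in one of the variables with the other as parameter, has degree at most $2$ and is not identically divisible by $p$ (this is where I use $p\neq 2$ for $G_1$, since the leading behaviour of $G(X,Y)$ in, say, $X$ is $X^2$). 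A standard counting argument — fixing one variable, the number of solutions of the other in $\Z/p^l\Z$ to a congruence $g_j\equiv 0$ modulo $p^{k}$ with $k=\lceil (l-1)/8\rceil$ is $O_D(p^{l-k}\cdot p^{l}/p^{l}) = O_D(p^{l-\lfloor k/(\deg)\rfloor})$ — shows this count is $\ll_D p^{2l}/p^{(l-1)/16}\ll_D p^{2l-l/16+O(1)}$. Summing the four values of $j$ (a constant number) keeps the same bound.

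Multiplying over $i=1,\dots,r$ then yields
\[
\sharp\calH(\bfl)\ll_D \prod_{i=1}^r p_i^{2l_i - l_i/16 + O(1)} \ll_D \prod_{i=1}^r p_i^{2(l_i-\tet_0)}
\]
for any $\tet_0<1/16$, where the $O(1)$ in each exponent — bounded in terms of $r$ and hence of $D$ — is absorbed into the implied constant and the slack between $1/16$ and $\tet_0$ (we may also need $l_i$ bounded below, but for $l_i$ in a fixed finite range the bound is trivial after adjusting the constant). I expect the only genuine point requiring care to be the claim that $G(X,Y)=g_1(X,Y)$ is not divisible by $p$ identically in one variable, i.e.\ that for $p\mid D$ (so $p\neq 2$) the count of $(a,b)$ modulo $p^l$ with $p^{k}\mid G(a,b)$ is genuinely smaller by a factor $p^{\lfloor k/2\rfloor}$ than $p^{2l}$: this is exactly the kind of estimate already carried out in equation (\ref{eqnx}) for $|\calR_l^*(p)|$, so the argument there transfers verbatim. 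The remaining steps — the Chinese Remainder splitting, dropping conditions c) and d), and the elementary per-variable solution count for a quadratic congruence — are routine.
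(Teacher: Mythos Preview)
Your proposal is correct and follows exactly the same approach as the paper: drop conditions c) and d), split via the Chinese Remainder Theorem, and for each prime $p_i$ use property a) together with the elementary count of pairs $(a,b)$ modulo $p_i^{l_i}$ for which some $g_j(a,b)$ is divisible by a high power of $p_i$, as in equation~(\ref{eqnx}). The paper's own proof is in fact a one-liner that records only the CRT reduction and leaves the per-prime estimate implicit, whereas you have spelled out the arithmetic (the passage from $G_j=g_j^8$ to $\nu_p(g_j)\geq\lceil(l-1)/8\rceil$ and the quadratic-congruence count) in detail.
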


\begin{proof}
It is enough to use property a) in the definition of the set $\calH(\bfl)$, and bound
\begin{equation*}
\sharp \calH(\bfl) \ll \prod_{i=1}^r \sharp \{(a,b)\mmod p_i^{l_i}: p_i^{l_i-1}|G_j(a,b) \mbox{ for some } 1\leq j\leq 4\}.\vspace{-0.7cm}
\end{equation*}
\end{proof}

\begin{lem}\label{lem2}
Let $\smash{D\equiv 1 \mmod 8}$ be squarefree with a factorization into primes $\smash{D=\prod_{i=1}^r p_i}$,  as before. Let $a$ and $b$ be congruence classes modulo $\smash{T=\prod_{i=1}^r p_i^{l_i}}$ such that $\smash{(a,b)\in \calH(\bfl)}$. Assume that $(A\mmod T)=a$ and $(B\mmod T)=b$ and that neither of the expressions $-AB$ or $D((A+B-1)^2-4AB)$ is a square in $\Q$.  Furthermore, assume that $S^{(D;A,B)}$ is non-singular.\par
Then there is a Brauer-Manin obstruction to the Hasse principle for $S^{(D;A,B)}$ if and only if, for all inert primes $q$ in $\bbQ(\sqrt{D})/\bbQ$, the evaluation of the Brauer class $\alp$, as described in section \ref{section3}, is constant.
\end{lem}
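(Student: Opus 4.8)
The plan is to reduce the existence of a Brauer--Manin obstruction to a statement about the parity of the number of places at which $\alp$ takes the value $1/2$, and then to show that, under the standing hypotheses, this parity is automatically correct once $\alp$ is constant at every inert prime. Recall from the discussion preceding the lemma that, since $-AB$ and $D((A+B-1)^2-4AB)$ are non-squares in $\Q$ and $S^{(D;A,B)}$ is non-singular with an adelic point, Lemma \ref{Brauer} gives $\Br(S^{(D;A,B)})/\Br(\Q)\cong\Z/2\Z$ or $0$. In the latter case there is no Brauer--Manin obstruction, so we must check that the hypothesis ``$\alp$ constant at all inert primes'' cannot hold together with a failure of the Hasse principle --- but if the Brauer group is trivial then $\alp$ is itself trivial (equal to $\Br(\Q)$) hence constant everywhere, and the surface does have a rational point by the Hasse principle for the relevant case; I would note that Theorem 3.4 in \cite{VAV} actually pins this down and in practice $-AB$, $D((A+B-1)^2-4AB)$ non-squares forces the $\Z/2\Z$ case, so the genuinely interesting situation is $\Br/\Br(\Q)\cong\Z/2\Z$. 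Then there is a Brauer--Manin obstruction to the Hasse principle if and only if $\alp$ evaluates constantly at each place $\nu$, say to a value $c_\nu\in\{0,\tfrac12\}$, and $\sum_\nu c_\nu=\tfrac12$ in $\Q/\Z$, i.e.\ the number of places with $c_\nu=\tfrac12$ is odd.

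The first main step is to identify the places that can possibly contribute. By Lemma \ref{eval0}, at every finite unramified prime $p\neq 2$ there is a point with $\ev_{\alp,p}(\bft)=0$; combined with the hypothesis that $\alp$ is constant at all inert primes, this shows $c_p=0$ at every such prime. At the infinite place and at $p=2$ (which is split by $D\equiv1\bmod 8$), solubility and the vanishing of $\ev_\alp$ follow from the results of \cite{JS} quoted in Section \ref{local} --- I would check that the local point constructed there (e.g.\ $(1:1:1:0:0)$-type points, or the points from Lemma 4.4.a) in \cite{JS}) has $q$ a square times a unit, so $c_2=c_\infty=0$. At split finite primes $p$, the Hilbert symbol $(q,D)_p$ is trivial because $D$ is a norm locally, so $c_p=0$. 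Hence the only places where $c_\nu$ can be nonzero are the ramified primes $p\mid D$ and the inert primes. At the inert primes, constancy is the hypothesis, so each contributes a well-defined $c_p\in\{0,\tfrac12\}$; at the ramified primes the situation is the one ``left unsolved'' in the introduction, so I would use Lemma \ref{lem1}: since $(a,b)\in\calH(\bfl)$ satisfies conditions (a)--(d), in particular (d), the class $\alp$ evaluates constantly at each $p_i\mid D$ and the set of ramified primes with $c_{p_i}=\tfrac12$ has odd cardinality, and moreover by (b) the relevant valuations are controlled so Lemma \ref{lem1}b) applies.

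The second main step, and the crux of the argument, is a reciprocity/parity computation. A Brauer--Manin obstruction requires constancy at \emph{all} places and total sum $\tfrac12$. We have already established constancy at all non-inert places; so: if $\alp$ is constant at all inert primes too, then $\alp$ is constant everywhere, and I claim the sum over all places is automatically $\tfrac12$. The point is that the evaluation $\ev_{\alp,\nu}$ at a local point of the form described in Section \ref{section3} is, up to the identification $\Br(\Q_\nu)\hookrightarrow\Q/\Z$, the Hilbert symbol $\tfrac12[(q,D)_\nu=-1]$, and global class field theory (the product formula for Hilbert symbols / Albert--Brauer--Hasse--Noether) forces $\sum_\nu \ev_{\alp,\nu}(\bft_\nu)=0$ whenever $(\bft_\nu)$ is an adelic point. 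The subtlety is that one must compare the values $c_\nu$ (the constant value of $\alp$) against $\ev_{\alp,\nu}$ on a \emph{genuine} adelic point; since at unramified split, inert-constant, $p=2$ and $\infty$ the value is $0$ on every local point (Lemma \ref{eval0} and \cite{JS}), an adelic point exists with these coordinates, and on it the ramified contributions must sum to $0$ in $\Q/\Z$, i.e.\ to an even number of $\tfrac12$'s --- but condition (d) of $\calH(\bfl)$ forces an \emph{odd} number of ramified primes with $c_{p_i}=\tfrac12$. This is a contradiction \emph{unless} some inert prime also contributes $\tfrac12$, making the total odd again. Conversely, if $\alp$ is non-constant at some inert prime, constancy at all places fails and there is no obstruction. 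I expect the main obstacle to be the bookkeeping in this parity argument: one has to be careful that ``constant value $c_\nu$'' and ``value on the adelic point'' agree, and that condition (d) of $\calH(\bfl)$ is stated so as to make exactly the ramified-prime parity land correctly; once the product formula for the Brauer class is invoked, the equivalence ``obstruction $\iff$ $\alp$ constant at all inert primes'' drops out, since the parity over the non-inert places is fixed by (d) and reciprocity.
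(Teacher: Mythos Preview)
Your first main step is essentially correct and matches the paper: you correctly argue that $S^{(D;A,B)}(\A_\Q)\neq\emptyset$, that split primes (including $2$ and $\infty$) contribute $0$, and that condition (d) of $\calH(\bfl)$ handles the ramified primes. You also correctly note that if $\alp$ is non-constant at some inert prime then it generates $\Br/\Br(\Q)$ (via Lemma~\ref{Brauer}) and hence there is no obstruction; this is exactly the paper's argument for that direction.

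The second main step, however, contains a genuine error. You assert that ``global class field theory forces $\sum_\nu \ev_{\alp,\nu}(\bft_\nu)=0$ whenever $(\bft_\nu)$ is an adelic point''. This is false: reciprocity gives $\sum_\nu \text{inv}_\nu(\alp(x))=0$ only for \emph{rational} points $x\in S(\Q)$, not for arbitrary adelic points. Indeed, the whole point of the Brauer--Manin obstruction is that this sum can be \emph{nonzero} on adelic points while being forced to vanish on rational ones. Your subsequent ``contradiction'' argument (ramified primes give odd parity, reciprocity forces even, so an inert prime must contribute $\tfrac12$) is therefore built on a false premise and, as written, ends up arguing against the very conclusion you want.

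The fix is much simpler and is what the paper does: no reciprocity is needed. If $\alp$ is constant at every inert prime $q$, then by Lemma~\ref{eval0} that constant value is $0$. Combined with your first step, the only nonzero contributions to $\sum_\nu c_\nu$ come from the ramified primes, and condition (d) of $\calH(\bfl)$ says these give $\tfrac12$ an odd number of times. Hence the total is $\tfrac12$, and since $\alp$ is constant at every place, this \emph{is} the Brauer--Manin obstruction. There is nothing further to check.
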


\begin{proof}
Note that the condition $D\equiv 1 \mmod 8$ ensures that $2$ is split in the quadratic extension $\bbQ(\sqrt{D})/\bbQ$. Hence $S^{(D;A,B)}(\Q_2)\neq \emptyset$ by Lemma 4.4 in \cite{JS}. Furthermore, the definition of $\calH(\bfl)$ ensures that $S^{(D;A,B)}(\Q_p)\neq \emptyset$ for all ramified primes and Lemma \ref{local1} ensures local solubility at all unramified primes different from $2$. Since $D>0$, it is clear that there are real solutions. Hence one has $\smash{S^{(D;A,B)}(\A_\Q)\neq \emptyset}$. If there is some inert prime $q$, for which the evaluation of $\alp$ on $\smash{S^{(D;A,B)}(\Q_q)}$ is non-constant, then this shows that $\alp$ defines a non-trivial element in $\smash{\Br (S^{(D;A,B)})/\Br(\Q)}$. By~Lemma \ref{Brauer} the element $\alp$ already generates $\Br (S^{(D;A,B)})/\Br(\Q)$ and hence there is no Brauer-Manin obstruction to the Hasse principle.\par
For the other direction, as $(a,b)\in \calH(\bfl)$, we have constant evaluation at all ramified primes, whereas the evaluation takes the value $\smash{\frac{1}{2}}$ an odd number of times. Moreover, we note that if $\alp$ evaluates constantly at some unramified prime different from $2$, then it automatically takes the value zero by Lemma \ref{eval0}. Also, the evaluation of $\alp$ at the prime $2$ is constantly zero, as this prime is split. Hence $S^{(D;A,B)}$ is a counterexample to the Hasse principle, explained by some Brauer-Manin obstruction, if $\alp$ evaluates constantly on $S^{(D;A,B)}(\Q_q)$ for all inert primes~$q$. 
\end{proof}

Before we start to establish an asymptotic formula for $\RBR (N;T,a,b)$, let us introduce the following definition.


\begin{defi}\label{admissible}
Let $n\neq 0$. We call an integer $B$ {\em admissible} for $n$, if the following two conditions hold.\vspace{0.1cm}\\
a) If $p$ is an inert prime and $p^l\|n$ for some odd $l$, then $B$ is of the form $B=up^{2k}$ with $\smash{(\frac{u}{p})=1}$, or $\smash{B=-n+up^{2k}}$ with $2k>l$ and $\smash{(\frac{u}{p})=1}$.\vspace{0.1cm}\\
b) If $p$ is an inert prime and $p^l\|n$ for some even $l$, then either $p^l|B$ or $B$ is of the form $B=up^{2k}$ with $2k<l$ and $(\frac{u}{p})=1$.\\
\end{defi}

We can now characterize elements in the family $S^{(D;A,B)}$, for which there is a Brauer-Manin obstruction to the Hasse principle.

\begin{lem}\label{lem2b}
Let $a$ and $b$ be congruence classes modulo $T$ such that $(a,b)\in \calH(\bfl)$. Assume that $(A \ {\rm mod} \ T)=a$, $(B\ {\rm mod}\ T)=b$, and that $S^{(D;A,B)}$ is non-singular and neither of the expressions $-AB$ or $D((A+B-1)^2-4AB)$ is a square in $\Q$. Put $n:=A-B$.\par
Then there is a Brauer-Manin obstruction to the Hasse principle for $S^{(D;A,B)}$ if and only if $B$ is admissible for $n$. 
\end{lem}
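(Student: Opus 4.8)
The plan is to reduce the statement to a place-by-place analysis and to match the condition "$B$ admissible for $n$" against the constancy criterion from Lemma~\ref{inert}, using Lemma~\ref{lem2} to see that only inert primes matter. By Lemma~\ref{lem2}, under the stated hypotheses there is a Brauer--Manin obstruction to the Hasse principle for $S^{(D;A,B)}$ if and only if, for every prime $q$ that is inert in $\bbQ(\sqrt{D})/\bbQ$, the evaluation of $\alp$ on $S^{(D;A,B)}(\Q_q)$ is constant. So the whole statement is equivalent to the assertion that $\alp$ evaluates constantly at every inert prime if and only if $B$ is admissible for $n=A-B$. Hence I would prove precisely this local reformulation, prime by prime.

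First I would fix an inert prime $q$ and record that, by symmetry in $A$ and $B$, Lemma~\ref{inert} describes when the evaluation of $\alp$ at $q$ is constant: if $\nu_q(A)\le\nu_q(B)$, then the relevant dichotomy is between $\nu_q(A)$ odd (constancy $\iff$ $B$ a square in $\Q_q$) and $\nu_q(A)$ even (non-constancy $\iff$ $\nu_q(B-A)>\nu_q(A)$ and $BD$ a square in $\Q_q$); in the opposite range one applies the same lemma with the roles of $A$ and $B$ exchanged. Now set $l=\nu_q(n)=\nu_q(A-B)$. I would split into the cases $l$ odd and $l$ even and, within each, into the subcases according to $\min(\nu_q(A),\nu_q(B))$ versus $l$. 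The key elementary observation is that $\nu_q(A)$, $\nu_q(B)$ and $l=\nu_q(A-B)$ satisfy the usual ultrametric constraints: the two smallest of the three valuations coincide, and if $\nu_q(A)\ne\nu_q(B)$ then $l=\min(\nu_q(A),\nu_q(B))$. Using this, one checks that, in each case, the condition in Definition~\ref{admissible} (phrased in terms of $B$ and $n=A-B$) is a verbatim translation of the constancy criterion of Lemma~\ref{inert}. For instance, when $l$ is odd one has $\nu_q(A)\ne\nu_q(B)$, so one of $A,B$ has valuation $l$ and the other has valuation $>l$; Definition~\ref{admissible}(a) says $B=u q^{2k}$ with $\big(\tfrac uq\big)=1$ (i.e.\ $\nu_q(B)$ even and $B$ a square in $\Q_q$), or $B=-n+uq^{2k}$ with $2k>l$ and $\big(\tfrac uq\big)=1$ (i.e.\ $\nu_q(B)>l=\nu_q(A)$, $\nu_q(B)$ even, and $A=B-n$ a square in $\Q_q$); these are exactly the two ways to make the ``$\nu$ odd $\Rightarrow$ square'' branch of Lemma~\ref{inert} succeed. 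The case $l$ even is handled analogously, with Definition~\ref{admissible}(b) encoding the negation of the ``non-constancy'' criterion (either $\min(\nu_q(A),\nu_q(B))>l$ — but this cannot happen when $l$ is even and forces a reanalysis — or the valuation gap condition fails, or $BD$ is a non-square).

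Having established the equivalence at each individual inert prime $q$, I would assemble the global statement: $\alp$ evaluates constantly at \emph{all} inert primes $\iff$ for every inert $q$, $B$ satisfies the local condition of Definition~\ref{admissible} at $q$ $\iff$ $B$ is admissible for $n$, which is exactly the conjunction over all inert primes of the local conditions in Definition~\ref{admissible}(a),(b). Combining with Lemma~\ref{lem2} then yields the claim. I would also note that for primes $q$ not dividing $2Dn A B$ the surface is everywhere of good reduction and $\alp$ is automatically constant (value $0$) by Lemma~\ref{eval0}, and Definition~\ref{admissible} imposes no condition there (the hypotheses $p^l\| n$ with $l\ge1$, or $p^l|B$, are vacuous), so only finitely many primes actually need checking and the equivalence is well posed.

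The main obstacle I anticipate is purely bookkeeping: correctly pairing up the several subcases of Definition~\ref{admissible} with the several subcases of Lemma~\ref{inert}, keeping track of which of $A,B$ has the smaller valuation (so that Lemma~\ref{inert} is applied in the right orientation), and verifying that ``$BD$ a square in $\Q_q$'' translates correctly when $q$ is inert (so $D$ is a non-square unit, and multiplying by $D$ flips the quadratic character of the unit part while preserving the valuation). In particular one must be careful in the even-$l$ case that the sub-possibility $\min(\nu_q(A),\nu_q(B))>l$ is genuinely impossible — since two of $\nu_q(A),\nu_q(B),l$ must be equal — so that Definition~\ref{admissible}(b) really does exhaust the constancy situations; and one must confirm that the parity conditions ``$2k>l$'' or ``$2k<l$'' in Definition~\ref{admissible} are forced (not merely allowed) by the arithmetic, matching the structure of Lemma~\ref{inert}'s hypotheses $\nu_p(B-A)>\nu_p(A)$ and $\nu_p(A)$ even/odd. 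None of this is deep, but it needs to be done carefully and symmetrically.
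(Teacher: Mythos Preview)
Your approach is essentially the paper's: reduce via Lemma~\ref{lem2} to constancy of $\alp$ at every inert prime, then match the case analysis of Lemma~\ref{inert} against Definition~\ref{admissible} prime by prime using the ultrametric relations among $\nu_q(A)$, $\nu_q(B)$, and $\nu_q(n)$. One bookkeeping slip to fix when you write it out: since $n=A-B$ one has $A=B+n$ (not $B-n$), so in your odd-$l$ second branch $B=-n+uq^{2k}$ with $2k>l$ gives $\nu_q(B)=l$ and $\nu_q(A)=2k>l$, i.e.\ the roles of $A$ and $B$ in your parenthetical are swapped---exactly the kind of orientation issue you anticipated.
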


\begin{proof}
By Lemma \ref{lem2}, we need to show that admissibility is equivalent to saying that $\alp$ evaluates constantly to zero at all inert primes $p$. For this, we consider some fixed inert prime $p$.\par
We have $A=n+B$ and it is, of course, possible that $\nu_p(A)<\nu_p(B)$. This happens if and only if $\nu_p(n)<\nu_p(B)$. In this case, $\nu_p(n)=\nu_p(A)$. Thus, Lemma \ref{inert} shows that the evaluation of $\alp$ is constant at the prime $p$ if and only if we are in one of the four cases below.\vspace{0.1cm}\\
i) $\nu_p(n)<\nu_p(B)$, $\nu_p(n)$ is odd, and $B$ is a square.\vspace{0.1cm}\\
ii) $\nu_p(n)<\nu_p(B)$ and $\nu_p(n)$ is even.\vspace{0.1cm}\\
iii) $\nu_p(n)\geq \nu_p(B)$, $\nu_p(B)$ is odd, and $A$ is a square.\vspace{0.1cm}\\
As $A=n+B$ and $\nu_p(B)$ is odd, the latter is possible only when $\nu_p(n)=\nu_p(B)$. I.e., if $B=-n+q$ for $q$ a square such that $\nu_p(q)>\nu_p(n)$.\vspace{0.1cm}\\
iv) $\nu_p(n)\geq \nu_p(B)$, $\nu_p(B)$ is even, and $\nu_p(n)=\nu_p(B)$ or $AD$ is a non-square. The last statement is hence of interest only when $\nu_p(n)>\nu_p(B)$. In which case, $AD= (n+B)D$ being a non-square is equivalent to $BD$ being a non-square, and to $B$ being a square.\vspace{0.2cm}\par
Thus, given $n$, $\alp$ evaluates constantly at the prime $p$ if and only if one of the following holds.\vspace{0.1cm}\\
$\bullet$ $\nu_p(n)>\nu_p(B)$ and $B$ is a square.\vspace{0.1cm}\\
$\bullet$ $\nu_p(n)$ is odd, $\nu_p(n)<\nu_p(B)$ and $B$ is a square.\vspace{0.1cm}\\
$\bullet$ $\nu_p(n)$ is odd, and $B=-n+q$, for $q$ a square such that $\nu_p(q)>\nu_p(n)$.\vspace{0.1cm}\\
$\bullet$ $\nu_p(n)$ is even and $\nu_p(n)\leq \nu_p(B)$.\vspace{0.1cm}\\
In view of Definition \ref{admissible}, this completes the proof.
\end{proof}

We now define the counting function
\begin{equation*}
r(N,n)=\sharp\{|B|\leq N: (B\mmod T)=b,\ |B+n|\leq N,\ B\mbox{ is admissible for } n\},
\end{equation*}
and for convenience of notation also write $r(N,n)=r(n)$ if the dependence on $N$ is clear. Let $(a,b)\in \calH (\bfl)$. By the above considerations, we can rewrite the counting function $\RBR (N;T,a,b)$ as
\begin{equation*}
\RBR (N;T,a,b)= \!\!\sum_{\substack{n\equiv a-b \mmod T\\ |n|\leq 2N}}\!\! r(n) + O(E_1)+O(E_2)+O(E_3),
\end{equation*}
with error terms of the form
\begin{equation*}
E_1=\sharp\{|A|,|B|\leq N: (A\mmod T)=a, (B\mmod T)=b, S^{(D;A,B)} \mbox{ is singular}\},
\end{equation*}
\begin{equation*}
E_2=\sharp\{|A|,|B|\leq N: -AB\in \calQ\},
\end{equation*}
and
\begin{equation*}
E_3=\sharp\{|A|,|B|\leq N: D((A+B-1)^2-4AB)\in \calQ\}.
\end{equation*}
By Lemma \ref{lem3}, we have
\begin{equation*}
E_2\ll N^{1+\eps},\quad \mbox{ and } \quad E_3\ll D^\eps N^{1+\eps}.
\end{equation*}
Next, we note that the set of $A$ and $B$ such that $S^{(D;A,B)}$ is singular is rather sparse and will give a negligible contribution. By Proposition 2.1 in \cite{JS}, we have
\begin{equation*}
E_1\ll\sharp\{|A|,|B|\leq N: A=B, \mbox{ or } AB=0, \mbox{ or } A^2-2AB+B^2-2A-2B+1=0\}. 
\end{equation*}
Hence, we see that $E_1\ll N$ and 
\begin{equation}\label{eqn4.10}
\RBR (N;T,a,b)= \!\!\sum_{\substack{n\equiv a-b \mmod T\\ |n|\leq 2N}} \!\!\!\!\!\!\!\! r(n) +O_\eps(D^\eps N^{1+\eps}).
\end{equation}
Note that the implied constant in the error term is independent of $T$. In our computations, we will generally keep explicit dependence of the error terms on $T$, whereas the implicit constants may depend on $D$.\par
Our next goal is to approximate the function $r(n)$ by some linear combination of multiplicative functions, which then can be used to evaluate the main term in the asymptotic for $\RBR (N;T,a,b)$. For this, we introduce the multiplicative function $\sig(m)$ for $m\in \N$, which is defined in the following way. If $l$ is an even positive integer and $p$ some inert prime, then we put
\begin{equation*}
\sig(p^l):=\frac{1}{p^l} +\sum_{k=0}^{\frac{l}{2}-1}\frac{p-1}{2p^{2k+1}},
\end{equation*}
and note that
\begin{equation*}
\sig(p^l)= \frac{1}{p^l}+ \frac{(1-p^{-l})}{2(1+p^{-1})}.
\end{equation*}
For $l$ odd and $p$ an inert prime, we set
\begin{equation*}
\sig(p^l):=  \sum_{k=0}^\infty \frac{p-1}{2p^{2k+1}} +\!\!\!\!
                 \sum_{k=(l+1)/2}^\infty \frac{p-1}{2p^{2k+1}}=\frac{1+p^{-(l+1)}}{2(1+p^{-1})}.
\end{equation*}
We extend $\sig$ to a multiplicative function on all of $\Z$ by setting $\sig(m):=1$ if $m$ is not divisible by any inert prime, and $\sig(-1):=1$.\par

\begin{lem}\label{lem4.4}
Let $q_1,\ldots, q_\tau$ be the list of the inert primes dividing $n$. One has
\begin{equation*}
r(n)= \frac{2N-|n|+1}{T} \sig(n)+\ra (n),
\end{equation*}
with an error $\ra (n)$, which is absolutely bounded by
\begin{equation*}
\ra (n)\ll \left(\prod_{i=1}^\tau q_i\right)^{3/4+\eps}  \sharp\{ \bfk\in \Z_{\geq 0}^\tau: \prod_i q_i^{k_i}\leq N^2\} .
\end{equation*}
\end{lem}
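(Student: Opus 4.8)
The plan is to localize the admissibility condition. One observes that ``$B$ admissible for $n$'' imposes a restriction only at the inert primes $q_1,\dots,q_\tau$ dividing $n$: at an inert prime $p\nmid n$ case~b) of Definition~\ref{admissible} holds vacuously, and at split or ramified primes there is no condition beyond $B\equiv b\pmod T$. Since every inert prime is coprime to $D$, and hence to $T$, the conditions at the $q_i$ and the congruence modulo $T$ sit at pairwise coprime moduli, so everything multiplies through the Chinese Remainder Theorem.

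First I would decompose the admissibility condition at each $q_i$ into the disjoint ``shapes'' appearing in Definition~\ref{admissible}. For $q_i$ with $\nu_{q_i}(n)=l$ these are the sets $\{\nu_{q_i}(B)=2k,\ Bq_i^{-2k}\text{ a square mod }q_i\}$ (for $k\ge0$ if $l$ is odd, for $2k<l$ if $l$ is even), the sets $\{\nu_{q_i}(B+n)=2k,\ (B+n)q_i^{-2k}\text{ a square mod }q_i\}$ for $2k>l$ when $l$ is odd, and the single set $\{q_i^l\mid B\}$ when $l$ is even. Each is a union of residue classes modulo $q_i^{2k+1}$ (namely $(q_i-1)/2$ of them) or modulo $q_i^{l}$ (a single one), the two kinds being disjoint since they prescribe incompatible valuations. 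Selecting one shape at each $q_i$, combining it with $B\equiv b\pmod T$ by CRT and running over all shape-vectors $\mathbf{s}$, I obtain the admissible $B$ in the box $|B|\le N$, $|B+n|\le N$ (an interval of length $2N-|n|+1$) as a disjoint union of residue-class counting problems, each contributing (number of classes)$\cdot(2N-|n|+1)/(TQ_{\mathbf{s}})$ plus an error bounded by the number of classes, where $Q_{\mathbf{s}}$ is the product of the relevant powers of the~$q_i$.

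For the main term, the sum over shape-vectors factors over $i$, and the per-prime sum of (number of classes)$/q_i^{(\cdot)}$ is, by a direct computation mirroring the very definition of $\sig$, exactly $\sig(q_i^{\nu_{q_i}(n)})$; as $\sig$ equals $1$ away from the inert primes dividing $n$, the sum of main terms is $\tfrac{2N-|n|+1}{T}\sig(n)$, and I set $\ra(n)$ to be the remainder. For the error, I would first note that a shape-vector contributes a nonzero count only when its prime powers $q_i^{2k}$ divide $B$, $B+n$ or $n$, each of which has absolute value $\le 2N$; this forces $\prod_i q_i^{2k_i}\ll N^{2}$, so the number of relevant shape-vectors is $\ll\sharp\{\bfk\in\Z_{\geq0}^\tau:\prod_i q_i^{k_i}\le N^2\}$ (the bounded number of shape-types at each prime being absorbed by the passage from $N$ to $N^2$), while the tail over the remaining shape-vectors contributes $\ll N^{-1+\eps}$ both to $r(n)$ and to the $\sig$-series and is negligible. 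Second, for a single relevant shape-vector I would reduce the count, after dividing out the common prime-power factors, to counting integers $m$ in an interval subject to quadratic-residue conditions modulo the $q_i$; writing the square-detection at each $q_i$ as $\tfrac12(1+\psi_{q_i})$ with $\psi_{q_i}$ the quadratic character modulo $q_i$, expanding the product, and estimating the resulting non-principal character sums over the interval by the P\'olya--Vinogradov inequality bounds the per-shape error by $\ll_\eps(\prod_i q_i)^{3/4+\eps}$ — and when the interval is too short for P\'olya--Vinogradov to beat the trivial bound, the trivial bound is already this small. Summing over the relevant shape-vectors gives the stated estimate for $\ra(n)$.

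The main obstacle will be the bookkeeping in the error step, particularly for the ``mixed'' shape-vectors where some $q_i$ constrain $B$ and others constrain $B+n$: the conditions ``$q_i^{2k}\,\|\,B+n$'' then become a single hard congruence on $B$ modulo $\prod q_i^{2k}$, which must be carried as an honest congruence rather than expanded, so that only the residual quadratic-residue refinements modulo the individual primes $q_i$ may be opened up as character sums of coprime conductor. Arranging matters so that P\'olya--Vinogradov applies uniformly, and so that the truncation produces precisely the counting function $\sharp\{\bfk:\prod_i q_i^{k_i}\le N^2\}$, is where the care lies; nothing deeper than P\'olya--Vinogradov is needed, and the exponent $3/4$ in place of the ``optimal'' $1/2$ merely reflects a deliberate slackness that keeps the bound clean for the subsequent summation over~$n$.
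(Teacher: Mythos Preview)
Your plan is essentially the paper's proof: the ``shapes'' you describe are exactly the paper's partition of $\{1,\dots,\tau\}$ into $I_1,I_2,I_3,I_4$ together with the exponent vector $\bfk$, the main-term computation factorises to $\sigma(n)$ in the same way, and the error is handled by expanding the square-detection via quadratic characters and applying P\'olya--Vinogradov. One clarification worth making explicit: the exponent $3/4$ is not mere slackness---because the characters attached to the $q_i$ constraining $B$ and those attached to the $q_i$ constraining $B+n$ live on two \emph{different} linear forms in the running variable, one cannot combine them into a single character of conductor $\prod q_i$; the paper sorts into residue classes modulo one subset (cost $\prod q$) and applies P\'olya--Vinogradov to the other (cost $(\prod q)^{1/2}$), then takes the better of the two orderings, which is what produces the $3/4$.
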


\begin{proof}
Write $n = q_1^{l_1}\ldots q_\tau^{l_\tau}$. Without loss of generality, we may assume that $q_1,\ldots, q_h$ divide $n$ to some odd power and that $q_{h+1},\ldots, q_\tau$ divide $n$ to some even power. We first split the counting function $r(n)$ into different contributions according to what property of $B$ makes this value admissible for $n$. Hence, let $I_i$ for $1\leq i\leq 4$ be disjoint index sets with $I_1\cup I_2= \{1,\ldots, h\}$ and $I_3\cup I_4= \{h+1,\ldots, \tau\}$. Now let $r_I(n)$ be the number of integers $B$ which satisfy the following properties:\vspace{0.1cm}\\
i) $|B|\leq N$ and $|B+n|\leq N$,\vspace{0.2cm}\\
ii) $(B\mmod T)=b$,\vspace{0.1cm}\\
iii) for $i\in I_1$, one has $B=u_iq_i^{k_i}$ for some $(\frac{u_i}{q_i})=1$ and some even $k_i\geq 0$,\vspace{0.1cm}\\
iv) if $i\in I_2$, then $B=-n+u_iq_i^{k_i}$ for some even $k_i>l_i$ and $(\frac{u_i}{q_i})=1$,\vspace{0.1cm}\\
v) for $i\in I_3$, one has $B=u_iq_i^{k_i}$ for some even $k_i<l_i$ and $(\frac{u_i}{q_i})=1$,\vspace{0.1cm}\\
vi) $q_i^{l_i}|B$ for $i\in I_4$.\vspace{0.3cm}\\
By the definition of admissibility for $n$, we have
\begin{equation}\label{eqn4.1}
r(n)= \!\!\!\!\sum_{\substack{I_1\cup I_2=\{1,\ldots, h\}\\ I_1\cap I_2=\emptyset}} \sum_{\substack{I_3\cup I_4=\{h+1,\ldots, \tau\}\\ I_3\cap I_4=\emptyset}} \!\! r_I(n).
\end{equation}
{\em First step:} We evaluate each of the summands $\smash{r_I(n)}$ separately. Let $\smash{\bfk\in \Z_{\geq 0}^{\tau -|I_4|}}$ and define $\smash{r_I(n,\bfk)}$ to be the same counting function as $r_I(n)$ where we postulate properties iii)-v) with the exponent occurring exactly equal to the given $k_i$. Note that $r_I(n;\bfk)=0$ unless all the $k_i$ are even and $k_i>l_i$ for $i\in I_2$ and $k_i<l_i$ for $i\in I_3$. Furthermore, one has $\smash{r_I(n,\bfk)=0}$ if $\smash{\prod_{i\in I_1\cup I_3} q_i^{k_i}>N}$ or $\smash{\prod_{i\in I_2} q_i^{k_i}>N}$. Hence, we may rewrite $r_I(n)$ as
\begin{equation}\label{eqn4.2}
r_I(n)= \sum_{\bfk\in \Z_{\geq 0}^{\tau -|I_4|}}\!\! r_I(n,\bfk),
\end{equation}
which is a finite sum. Now we approximate $r_I(n,\bfk)$ for fixed even $\bfk$. For this, we parametrise the integers $B$ counted by $r_I(n,\bfk)$ in the following way. By conditions iii), iv) and vi), we have 
\begin{equation*}
B=t \!\!\prod_{i\in I_1\cup I_3} \!\!\!\! q_i^{k_i} \prod_{i\in I_4}q_i^{l_i}
\end{equation*}
for some $t$ such that $\smash{(t,\prod_{i\in I_1\cup I_3} q_i)=1}$. Furthermore, by iv), we have $B+n\equiv 0$ mod $\smash{\prod_{i\in I_2}q_i^{k_i}}$ and, since $k_i>l_i$ for $i\in I_2$, we obtain $\smash{t=t'\prod_{i\in I_2} q_i^{l_i}}$ for some integer $t'$. Again, by v), this integer $t'$ has to satisfy the congruence
$$ t'\!\!\prod_{i\in I_1\cup I_3} \!\!\!\! q_i^{k_i} \prod_{i\in I_4} q_i^{l_i} + (\prod_{i\in I_2} q_i^{-l_i})n\equiv 0 \hspace{0.2cm} \mmod \prod_{i\in I_2} q_i^{k_i-l_i}.$$
Since all the $q_i$ are distinct primes and the index sets $I_i$ are disjoint, this congruence has a unique solution $t_0$ for $t'$ modulo $\prod_{i\in I_2} q_i^{k_i-l_i}$. Hence, we may put 
$$ t'=t_0+u \prod_{i\in I_2} q_i^{k_i-l_i}.$$
Set $\varpi:= \prod_{i\in I_1\cup I_2\cup I_3} q_i^{k_i} \prod_{i\in I_4} q_i^{l_i}$, as well as $\varpi_2:= \prod_{i\in I_2} q_i^{k_i-l_i}$ and $\varpi_1:= \prod_{i\in I_1\cup I_3} q_i^{k_i}\prod_{i\in I_4}q_i^{l_i}$. Then we obtain 
$$B= (t_0+ u \varpi_2)\varpi_1 \prod_{i\in I_2} q_i^{l_i}.$$
Next we define $\nu_0\in\Z$ by $t_0\varpi_1+(\prod_{i\in I_2} q_i^{-l_i})n= \nu_0 \varpi_2$. Then we may rewrite $B+n$ as
$$B+n= (\nu_0+u\varpi_1) \prod_{i\in I_2} q_i^{k_i}.$$
The condition that $B$ runs through an interval given by $|B|\leq N$ and $|B+n|\leq N$ restricts the range of the new variable $u$ again to some bounded interval, which we call $\calJ$. Set $\smash{\mu:=t_0 \varpi_1 \prod_{i\in I_2} q_i^{l_i}}$ and $\smash{\nu:=\nu_0\prod_{i\in I_2} q_i^{k_i}}$. Then the function $r_I(n,\bfk)$ counts the number of integers $u\in \calJ$ with the following properties:\vspace{0.2cm}\\
a) the coprimality conditions $(t_0+u\varpi_2,\prod_{i\in I_1\cup I_3}q_i)=1$ and $(\nu_0+u\varpi_1,\prod_{i\in I_2}q_i)=1$~hold,\vspace{0.1cm}\\
b) $ (u\varpi + \mu \mmod T)= b$,\vspace{0.1cm}\\
c) for $i\in I_1\cup I_3$, one has $\smash{(\frac{q_i^{-k_i} (u\varpi+\mu)}{q_i})=1}$, and\vspace{0.1cm}\\
d) for $i\in I_2$, one has $(\frac{q_i^{-k_i}(u\varpi + \nu)}{q_i})=1$.\\\\
If $\bfk$ has even coordinates and $k_i>l_i$ for $i\in I_2$ and $k_i<l_i$ for $i\in I_3$, then we can now write $r_I(n,\bfk)$ in the form
\begin{equation*}
r_I(n,\bfk)= 2^{-\tau+|I_4|} \!\!\!\sum_{\substack{u\in \calJ \\  \mbox{ \scriptsize a), b) hold}}} \!\! \prod_{i\in I_1\cup I_3} \left(\left(\frac{q_i^{-k_i}(u\varpi+\mu)}{q_i}\right)+1\right)\prod_{i\in I_2} \left(\left(\frac{q_i^{-k_i}(u\varpi + \nu)}{q_i}\right)+1\right).\vspace{0.2cm}
\end{equation*}
{\em Second step:} Next, we remove the coprimality condition a). For this, write $\bfd=(d_1,d_2)$ and define
\begin{equation*}
r_I(n,\bfk,\bfd):= 2^{-\tau+|I_4|} \!\!\sum_{\substack{u\in \calJ, \mbox{\scriptsize b) holds}\\ d_1|t_0+u\varpi_2\\ d_2|\nu_0+u\varpi_1}} \prod_{i\in I_1\cup I_3} \left(\left(\frac{q_i^{-k_i}(u\varpi+\mu)}{q_i}\right)+1\right)\prod_{i\in I_2} \left(\left(\frac{q_i^{-k_i}(u\varpi + \nu)}{q_i}\right)+1\right).
\end{equation*}
Then we have
\begin{equation}\label{eqn4.3}
r_I(n,\bfk)= \!\!\sum_{d_1|\prod_{i\in I_1\cup I_3} q_i}\!\!\!\!\mu(d_1)\!\! \sum_{d_2|\prod_{i\in I_2}q_i}\!\!\mu(d_2) r_I(n,\bfk,\bfd).
\end{equation}
Let us consider one of the summands $r_I(n,\bfk,\bfd)$. Observe that, since $(d_1,\varpi_2)=1$, the congruence condition $t_0+u\varpi_2\equiv 0 \mmod d_1$ forces $u$ to lie in a unique congruence class modulo $d_1$, and similarly for the congruence $\nu_0+u\varpi_1\equiv 0 \mmod d_2$. Furthermore, the congruence condition b) 
$$(u\varpi +\mu \mmod T)= b $$
forces $u$ to lie in a fixed congruence class modulo $T$, since $(\varpi,T)=1$. Since all of the $d_1,d_2,T$ are coprime, we may substitute $u=u_0+d_1d_2Tx$ for some $u_0\in \Z$. The restriction $u\in \calJ$ is equivalent to $x\in \calJ'$ for some interval $\calJ'$, which we define by this property. Put
\begin{equation*}
\begin{split}
\alp:= u_0 \prod_{i\in I_2} \!\! q_i^{k_i}\prod_{i\in I_4}\!\! q_i^{l_i}+ t_0 \!\!\!\prod_{i\in I_2\cup I_4}\!\! q_i^{l_i},\ \quad\quad\bet:= d_1d_2T\prod_{i\in I_2} \!\! q_i^{k_i}\prod_{i\in I_4}\!\! q_i^{l_i},\\  \gam:= u_0 \!\!\prod_{i\in I_1\cup I_3}\!\! \!\! q_i^{k_i}\prod_{i\in I_4} \!\! q_i^{l_i}+\nu_0,\ \quad\quad\del := d_1d_2T\!\! \prod_{i\in I_1\cup I_3}\!\!\!\! q_i^{k_i}\prod_{i\in I_4}\!\! q_i^{l_i},
\end{split}
\end{equation*}
such that we have
$$ u\varpi +\mu= (\alp +\bet x) \!\!\prod_{i\in I_1\cup I_3}\! \! q_i^{k_i} \quad \mbox{ and } \quad u\varpi +\nu= (\gam+\del x) \prod_{i\in I_2} q_i^{k_i}.$$
Set $\varpi_3= \prod_{i\in I_1\cup I_3} q_i^{k_i}$ and $\varpi_4=\prod_{i\in I_2}q_i^{k_i}$. Then we have
\begin{equation}\label{eqn4.4}
r_I(n,\bfk,\bfd)= 2^{-\tau +|I_4|}\sum_{x\in \calJ'} \prod_{i\in I_1\cup I_3} \left(\left(\frac{q_i^{-k_i}\varpi_3(\alp+\bet x)}{q_i}\right)+1\right)\prod_{i\in I_2} \left(\left(\frac{q_i^{-k_i}\varpi_4(\gam+\del x)}{q_i}\right)+1\right).\vspace{0.2cm}
\end{equation}
{\em Third step:} Let $I_i'$ for $1\leq i\leq 3$ be subsets of $I_i$, and consider the sum
\begin{equation*}
\sum_{x\in \calJ'}\prod_{i\in I_1'\cup I_3'} \left(\frac{q_i^{-k_i}\varpi_3(\alp+\bet x)}{q_i}\right)\prod_{i\in I_2'}\left(\frac{q_i^{-k_i}\varpi_4(\gam+\del x)}{q_i}\right).
\end{equation*}
We aim to give an upper bound for this character sum. By the definitions of $\varpi_3$ and $\varpi_4$, and from the fact that all $k_i$ are even, we see that it coincides with the sum
\begin{equation*}
E_{I'}:= \sum_{x\in \calJ'}\prod_{i\in I_1'\cup I_3'} \left(\frac{\alp+\bet x}{q_i}\right)\prod_{i\in I_2'}\left(\frac{\gam+\del x}{q_i}\right).
\end{equation*}
Next, let us choose a complete set of residues modulo $\smash{\prod_{i\in I_1'\cup I_3'}q_i}$, which we call $\calT\subset \Z$, with the property that $\gam + \del y \equiv 0$ modulo $\smash{\prod_{i\in I_2}q_i}$ for all $y\in \calT$. For this, we need to make sure that if $q_i$, for $i\in I_2$, divides $\del$, then it also divides $\gam$. This is the case by definition of $\smash{\del= d_1d_2T\prod_{i\in I_1\cup I_3}q_i^{k_i}\prod_{i\in I_4}q_i^{l_i}}$ and $\gam= u_0\varpi_1+\nu_0\equiv 0$ modulo $d_2$. Indeed, we have constructed $u_0$ in such a way that $d_2|u_0\varpi_1+\nu_0$. We now sort the elements $x\in \calJ'$ into these residue classes modulo $\smash{\prod_{i\in I_1'\cup I_3'}q_i}$, and write $\smash{x=y+z\prod_{i\in I_1'\cup I_3'} q_i}$ for $y\in \calT$ and $x\equiv y $ modulo $\smash{\prod_{i\in I_1'\cup I_3'}q_i}$. For each fixed $y\in \calT$, there is some interval $\calJ''(y)$ such that for all $x$ in this residue class $y$ one has $x\in \calJ'$ if and only if $z\in \calJ''(y)$. We rewrite $E_{I'}$ as
\begin{equation*}
E_{I'}= \sum_{y\in \calT} \prod_{i\in I_1'\cup I_3'} \left(\frac{\alp+\bet y}{q_i}\right)\sum_{z\in \calJ''(y)} \prod_{i\in I_2'} \left(\frac{z\del \prod_{j\in I_1'\cup I_3'}q_j }{q_i}\right).
\end{equation*}
If there is some $i\in I_2'$ with $q_i|\del$, then $E_{I'}=0$ trivially. Otherwise, we use the Polya-Vinogradov inequality (see equation (51), p. 263 in \cite{Tenenbaum}) for multiplicative characters to deduce the bound
\begin{equation*}
E_{I'}\ll \!\!\prod_{i\in I_1'\cup I_3'}\!\!\! q_i \left(\prod_{i\in I_2'}q_i\right)^{1/2} \!\!\log (\prod_{i=1}^\tau q_i).
\end{equation*}
In reversing the roles of $\prod_{i\in I_1'\cup I_3'} q_i$ and $\prod_{i\in I_2'} q_i$, we obtain a similar bound with these two terms interchanged, and hence conclude that
\begin{equation*}
E_{I'}\ll (\!\!\prod_{i\in I_1'\cup I_2'\cup I_3'}\!\! \!\!q_i)^{3/4}  \log (\prod_{i=1}^\tau q_i).\vspace{0.2cm}
\end{equation*}
{\em Fourth step:} Using this bound, we may now rewrite the function $r_I(n,\bfk,\bfd)$ in (\ref{eqn4.4}) as
\begin{equation*}
\begin{split}
r_I(n,\bfk,\bfd)&=\sum_{x\in \calJ'}2^{-\tau +|I_4|}+O\left( 2^{-\tau+|I_4|}2^{|I_1|+|I_2|+|I_3|} (\!\!\prod_{i\in I_1'\cup I_2'\cup I_3'}\!\!\!\! q_i)^{3/4}  \log (\prod_{i=1}^\tau q_i)\right)\\
&=2^{-\tau+|I_4|}(|\calJ'|+O(1))+ O\left( (\!\!\prod_{i\in I_1'\cup I_2'\cup I_3'}\!\! \!\! q_i)^{3/4}  \log (\prod_{i=1}^\tau q_i)\right).
\end{split}
\end{equation*}
We compute the length of the interval $\calJ'$ as 
$$ |\calJ'|= (Td_1d_2\varpi)^{-1}(2N-|n|+1)$$
and deduce that
\begin{equation*}
r_I(n,\bfk,\bfd)= 2^{-\tau+|I_4|}(Td_1d_2\varpi)^{-1}(2N-|n|+1)+O\left( (\!\!\prod_{i\in I_1'\cup I_2'\cup I_3'}\!\!\!\!\! q_i)^{3/4}  \log (\prod_{i=1}^\tau q_i)\right).
\end{equation*}
By equation (\ref{eqn4.3}), we obtain
\begin{equation*}
r_I(n,\bfk)= 2^{-\tau+|I_4|} \frac{2N-|n|+1}{T\varpi} \!\!\!\! \sum_{d|\prod_{i\in I_1\cup I_2\cup I_3}q_i}\!\!\!\!\!\! \frac{\mu(d)}{d} +O \left( (\!\!\prod_{i\in I_1'\cup I_2'\cup I_3'}\!\! \!\! q_i)^{3/4}  \log (\prod_{i=1}^\tau q_i)\right).
\end{equation*}
Let $\calK$ be the set of vectors $\bfk\in \Z_{\geq 0}^{\tau -|I_4|}$ such that all coordinates $k_i$ are even and $k_i>l_i$ for $i\in I_2$ and $k_i<l_i$ for $i\in I_3$. Furthermore, let $\calK(N)$ be the intersection of $\calK$ with the set of tuples $\smash{\bfk\in \Z_{\geq 0}^{\tau -|I_4|}}$ such that $\smash{\prod_{i\in I_1\cup I_3} q_i^{k_i}\leq N}$ and $\smash{\prod_{i\in I_2} q_i^{k_i}\leq N}$. Then we obtain by equation (\ref{eqn4.2})
\begin{equation}\label{eqn4.5}
r_I(n)= \!\!\sum_{\bfk\in \calK(N)} \!\!2^{-\tau+|I_4|} \frac{2N-|n|+1}{T\varpi} \sum_{d|\prod_{i\in I_1\cup I_2\cup I_3}q_i}\!\!\!\! \frac{\mu(d)}{d}+O(E_4),
\end{equation}
with an error term $E_4$ bounded by
\begin{equation*}
E_4\ll (\prod_{i=1}^\tau q_i)^{3/4+\eps} \sharp\{\bfk\in \Z_{\geq 0}^\tau: \prod_{i=1}^\tau q_i^{k_i}\leq N^2\}.\vspace{0.2cm}
\end{equation*}
{\em Fifth step:} We next complete the sum in (\ref{eqn4.5}) over all $\bfk\in \calK$. Note that it is absolutely convergent, and more precisely one has
\begin{equation*}
\sum_{\bfk\in \calK\setminus \calK(N)}\prod_{i\in I_1\cup I_2\cup I_3}\!\!\! q_i^{-k_i}\ll \tau 2^\tau N^{-1} \sharp\{\bfk\in \Z_{\geq 0}^\tau: \prod_{i=1}^\tau q_i^{k_i}\leq N^2\}.
\end{equation*}
Hence, we obtain
\begin{equation*}
r_I(n)= \sum_{\bfk\in \calK}  2^{-\tau+|I_4|} \frac{2N-|n|+1}{T\varpi} \prod_{i\in I_1\cup I_2\cup I_3}\!\!\! \left(1-\frac{1}{q_i}\right)+O(E_4).
\end{equation*}
We finally come back to equation (\ref{eqn4.1}) to evaluate $r(n)$ as
\begin{equation}\label{eqn4.6}
r(n)= \frac{2N-|n|+1}{T}\!\!\sum_{\substack{I_1\cup I_2=\{1,\ldots, h\}\\ I_1\cap I_2=\emptyset}} \sum_{\substack{I_3\cup I_4=\{h+1,\ldots, \tau\}\\ I_3\cap I_4=\emptyset}} \!\!\rho(I)+O(2^\tau E_4),
\end{equation}
with
\begin{equation*}
\rho(I):=\sum_{\bfk\in \calK}\prod_{i\in I_4}q_i^{-l_i}\!\! \!\!\prod_{i\in I_1\cup I_2\cup I_3}\!\!\left(q_i^{-k_i}2^{-1}(1-\frac{1}{q_i})\right) .
\end{equation*}
We compute $\rho(I)$ as
\begin{equation*}
\begin{split}
\rho(I)= &\prod_{i\in I_1} \left(2^{-1}(1+q_i^{-1})^{-1}\right) \prod_{i\in I_2} \left(2^{-1}q_i^{-(l_i+1)}(1+q_i^{-1})^{-1}\right)\\ &\hspace{1.5cm} \cdot \prod_{i\in I_3} \left( 2^{-1}(1+q_i^{-1})^{-1} (1-q_i^{-l_i})\right) \prod_{i\in I_4} q_i^{-l_i}.
\end{split}
\end{equation*}
By the definition of the multiplicative function $\sig(n)$, we conclude that
\begin{equation*}
r(n)= \frac{2N-|n|+1}{T} \sig(n) +O(2^\tau E_4),
\end{equation*}
which establishes the lemma.

\end{proof}

Before we treat the main term arising from Lemma \ref{lem4.4} in the asymptotic for $\RBR (N;T,a,b)$, let us show that the contribution of the error term $r_1(n)$ in Lemma \ref{lem4.4} is negligible.

\begin{lem}\label{lem4.5}
Let $r_1(n)$ be as in Lemma \ref{lem4.4}. Then one has
\begin{equation*}
\sum_{1\leq n\leq N} \!\!\! r_1(n) \ll_\eps  N^{7/4+\eps},
\end{equation*}
with an implied constant independent of $T$.
\end{lem}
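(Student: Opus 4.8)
The plan is to substitute the pointwise estimate of Lemma~\ref{lem4.4} into the sum and to exploit that $r_1(n)$ is small unless $n$ is divisible by the product of several distinct inert primes, which is a rare event. For $n\geq 1$ let $q_1<\dots<q_\tau$ be the inert primes dividing $n$, put $R(n):=q_1\cdots q_\tau$ for the (squarefree) product of the distinct inert prime divisors of $n$, with $R(n)=1$ if $\tau=0$, and set $K(n):=\sharp\{\bfk\in\Z_{\geq 0}^{\tau}:\prod_{i}q_i^{k_i}\leq N^2\}$. By Lemma~\ref{lem4.4} one has $r_1(n)\ll R(n)^{3/4+\eps}K(n)$, and the implied constant there depends only on $D$ and $\eps$ --- in particular \emph{not} on $T$, which is the uniformity we have to preserve. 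The key observation is that the assignment $\bfk\mapsto v:=\prod_{i}q_i^{k_i}$ is injective, with image precisely the set of integers $v\leq N^2$ whose radical divides $R(n)$; hence $K(n)=\sharp\{v\leq N^2:\mathrm{rad}(v)\mid R(n)\}$ depends on $n$ only through $m:=R(n)$.

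Summing over $n\leq N$ and grouping according to the value of $m$ --- a squarefree integer that divides $n$, so that there are at most $N/m$ integers $n\leq N$ with $R(n)=m$ --- gives
\begin{equation*}
\sum_{1\leq n\leq N}r_1(n)\ \ll\ N\sum_{\substack{m\leq N\\ m\ \mathrm{squarefree}}}m^{-1/4+\eps}\,\sharp\{v\leq N^2:\mathrm{rad}(v)\mid m\}\,.
\end{equation*}
Next I would interchange the order of summation. For fixed $v\leq N^2$ the inner sum over $m$ (squarefree, $\leq N$, divisible by $\mathrm{rad}(v)$) is, on writing $m=\mathrm{rad}(v)\,m'$,
\begin{equation*}
\mathrm{rad}(v)^{-1/4+\eps}\!\!\!\sum_{m'\leq N/\mathrm{rad}(v)}\!\!\!(m')^{-1/4+\eps}\ \ll\ \mathrm{rad}(v)^{-1/4+\eps}\bigl(N/\mathrm{rad}(v)\bigr)^{3/4+\eps}\ =\ N^{3/4+\eps}\,\mathrm{rad}(v)^{-1}\,,
\end{equation*}
the powers of $\eps$ in the exponent of $\mathrm{rad}(v)$ cancelling (and the inner sum being empty when $\mathrm{rad}(v)>N$).

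Putting the pieces together leaves $\sum_{1\leq n\leq N}r_1(n)\ll N^{7/4+\eps}\sum_{v\leq N^2}\mathrm{rad}(v)^{-1}$, and I would finish with the standard bound $\sum_{v\leq X}\mathrm{rad}(v)^{-1}\ll_\eps X^{\eps}$, which follows from Rankin's trick since $\sum_{v\geq 1}\mathrm{rad}(v)^{-1}v^{-\del}$ converges for every $\del>0$. A harmless readjustment of $\eps$ then produces $\sum_{1\leq n\leq N}r_1(n)\ll_\eps N^{7/4+\eps}$ with an implied constant independent of $T$, as required. Everything here is routine once Lemma~\ref{lem4.4} is granted; the one point needing a little care is organising the double sum so as to avoid a crude pointwise bound for $K(n)$ --- which on its own is only of size $N^{o(1)}$ and would require the maximal-order estimate $\omega(n)\ll\log N/\log\log N$ --- together with the bookkeeping of the exponents $3/4$, $1/4$ and $\eps$, and checking that no factor of $T$ creeps in. That reshuffling is the main, if modest, obstacle.
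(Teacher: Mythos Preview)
Your argument is correct. It takes a genuinely different route from the paper's proof, and it is amusing that the approach you explicitly set aside as ``crude'' is precisely the one the paper adopts. The paper proves the \emph{pointwise} bound
\[
K(n)=\sharp\Bigl\{\bfk\in\Z_{\geq 0}^\tau:\prod_i q_i^{k_i}\leq N^2\Bigr\}\ll_\eps N^\eps
\]
for every $n\leq N$, by bounding $K(n)$ by the volume of the simplex $\{\bfk\in\R_{\geq 0}^\tau:\sum_i k_i\log q_i\leq 3\log N\}$, and then controlling $(3\log N)^\tau/(\tau!\prod_i\log q_i)$ via the maximal-order estimate $\tau\leq C\log N/\log\log N$ (from the prime number theorem) together with Stirling's formula. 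Once that is in hand, $\prod_i q_i\leq n\leq N$ gives $r_1(n)\ll N^{3/4+\eps}$ pointwise and the sum is trivial.

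Your interchange-of-summation device sidesteps all of that analytic input: you trade the simplex volume for the elementary Rankin bound $\sum_{v\leq X}\mathrm{rad}(v)^{-1}\ll_\eps X^\eps$, and the maximal-order estimate for $\omega(n)$ never enters. The paper's route has the minor advantage of producing the uniform pointwise bound $K(n)\ll N^\eps$ as a standalone fact; yours is shorter and more self-contained for the purpose at hand. Both arguments are visibly independent of $T$, as required.
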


\begin{proof}
It is sufficient to show that
\begin{equation}\label{eqn4.30}
R_\tau(N;\bfq):=\sharp\{ \bfk\in \Z_{\geq 0}^\tau: \prod_{i=1}^\tau q_i^{k_i}\leq N^2\} \ll_\eps N^{\eps},
\end{equation}
for any $n\leq N$. Hence, we assume that $q_1<\ldots <q_\tau$ are primes with $\prod_{i=1}^\tau q_i |n$. Since we are only interested in upper bounds, we may even assume that $q_1<\ldots <q_\tau$ are the first $\tau$ primes. Note that
\begin{equation*}
R_\tau(N;\bfq)= \sharp\{ \bfk\in \Z_{\geq 0}^\tau: \sum_{i=1}^\tau k_i \log q_i \leq 2\log N\}.
\end{equation*}
We claim that
\begin{equation*}
R_\tau(N;\bfq)\leq \vol \{ \bfk\in \R_{\geq 0}^\tau: \sum_{i=1}^\tau  k_i \log q_i \leq 3\log N\}.
\end{equation*}
This holds since, for any $\bfk$ counted by $R_\tau(N;\bfq)$, one has $\prod_{i=1}^\tau q_i\ \leq N$ and hence
\begin{equation*}
\sum_{i=1}^\tau (k_i+1)\log q_i \leq 2\log N+ \sum_{i=1}^\tau \log q_i \leq 3 \log N.
\end{equation*}
Next, we observe the volume of the simplex arising is
\begin{equation*}
\vol \{\bfk\in \R_{\geq 0}^\tau: \sum_{i=1}^\tau k_i \log q_i \leq 3\log N\} = \frac{1}{\tau!} \frac{(3\log N)^\tau}{\prod_{i=1}^\tau \log q_i}.
\end{equation*}
We need to get an upper bound for the last expression. For this, we first need a rough upper bound for $\tau$. Note that there is some positive constant $C_1$, such that
\begin{equation*}
C_1q_\tau\leq \sum_{q\leq q_\tau} \log q \leq \log N,
\end{equation*}
where the summation is over all prime numbers $q$. Here we used that $\prod_{i=1}^\tau q_i\leq N$. Now we obtain by the prime number theorem
\begin{equation*}
\tau\leq \sharp\{ q\leq q_\tau: q \mbox{ prime }\} \leq C_2 \frac{q_\tau}{\log q_\tau} \leq C_3 \frac{\log N}{\log \log N},
\end{equation*}
for some positive constants $C_2$ and $C_3$.\par
We are now in a position to estimate the size of
\begin{equation*}
\begin{split}
\log \left[ \frac{1}{\tau!} \frac{(3\log N)^\tau}{\prod_{i=1}^\tau \log q_i}\right] &= \tau \log 3 + \tau \log \log N - \sum_{i=1}^\tau \log i - \sum_{i=1}^\tau \log \log q_i\\
&= \tau \log 3 + \tau\log \log N-\tau \log \tau +\tau -\sum_{i=1}^\tau \log \log q_i +O(\log \tau)\\
&= \tau \log \log N+\tau-\tau \log \tau+ O\left(\frac{\log N}{\log \log N} \log \log \log N\right).
\end{split}
\end{equation*}
The derivative of the function $g(\tau):= -\tau \log \tau +\tau +\tau \log \log N$ is given by $\log \log N-\log \tau$, and hence $g(\tau)$ is increasing for $\tau< \log N$. For $N$ sufficiently large, we may therefore apply the bound $\tau\leq C_3 \frac{\log N}{\log \log N}$, and obtain
\begin{equation*}
\begin{split}
\log \left[ \frac{1}{\tau!} \frac{(2\log N)^\tau}{\prod_{i=1}^\tau \log q_i}\right] &\leq C_3 \frac{\log N}{\log\log N} \log\log N - C_3 \left(\frac{\log N}{\log\log N} \right)\log \left(C_3 \frac{\log N}{\log\log N}\right) \\&\hspace{4cm}+O\left( \frac{\log N}{\log\log N} \log\log\log N\right)\\ &=C_3 \frac{\log N}{\log\log N}\left(\log\log\log N-\log C_3\right)+O\left( \frac{\log N}{\log\log N} \log\log\log N\right)\\ &= O\left( \frac{\log N}{\log\log N} \log\log\log N\right).
\end{split}
\end{equation*}
This establishes the bound (\ref{eqn4.30}) with an implied constant depending on $\eps$.
\end{proof}

Next, we aim to evaluate the sum
\begin{equation}\label{eqn4.31}
\Sigma_1:=\!\!\sum_{\substack{1\leq n\leq 2N\\ n\equiv b-a \mmod T}}\!\!\!\sig(n).
\end{equation}
For this, we let $T'=\gcd (b-a,T)$ and $T''=T/T'$. Then we may rewrite the sum $\Sigma_1$ as
\begin{equation*}
\Sigma_1= \!\!\sum_{\substack{1\leq T'm\leq 2N\\ T'm\equiv b-a \mmod T}} \!\!\!\!\!\!\!\sig(T' m).
\end{equation*}
Let $b-a=T'd$ for some $d$ modulo $T''$, and further rewrite $\Sigma_1$ as
\begin{equation*}
\Sigma_1= \!\!\sum_{\substack{1\leq m\leq N'\\ m\equiv d \mmod T'' }} \!\!\!\!\!\!\sig(m),
\end{equation*}
with $N'=\frac{2N}{T'}$. We encode the condition $m\equiv d \mod T''$ using multiplicative characters modulo~$T''$, and obtain
\begin{equation}\label{eqnSigma}
\Sigma_1= \frac{1}{\varphi(T'')}\sum_{\chi \mmod T''}\!\! \!\!\overline{\chi}(d) \sum_{1\leq m\leq N'} \!\!\!\!\chi(m) \sig(m) = \frac{1}{\varphi(T'')}\sum_{\chi \mmod T''} \!\!\!\!\overline{\chi}(d) \Sigma_1(\chi),
\end{equation}
with sums of the form
\begin{equation*}
\Sigma_1(\chi)= \sum_{1\leq m\leq N'}\!\! \!\!\chi(m) \sig(m),
\end{equation*}
for any multiplicative character $\chi$ modulo $T''$. These can be evaluated via an application of Perron's formula. For this, let $D_\chi(s)$ be the associated Dirichlet series, given by
\begin{equation*}
D_\chi(s):=\sum_{m=1}^\infty \frac{\chi(m)\sig(m)}{m^s}.
\end{equation*}
It is clear that $D_\chi(s)$ is absolutely convergent for $\Re(s)>1$. In this region, it can be expressed as an Euler product
\begin{equation*}
D_\chi(s)= \prod_{p|D} \left(1-\frac{\chi(p)}{p^s}\right)^{-1}\prod_{\left(\frac{D}{p}\right)=1}\!\!\left(1-\frac{\chi(p)}{p^s}\right)^{-1}\prod_{\left(\frac{D}{p}\right)=-1}\!\!\left(1+\sum_{l=1}^\infty\frac{\sig(p^l)\chi(p^l)}{p^{ls}}\right).
\end{equation*}

We next compare the Dirichlet series $D_\chi(s)$ to products of Dirichlet $L$-functions. For some character $\chi$ modulo $T''$, we write
\begin{equation*}
L(s,\chi)=\sum_{n=1}^\infty \frac{\chi(n)}{n^s}.
\end{equation*}

\begin{lem}\label{lem4}
One has
\begin{equation*}
D_\chi(s)^4= L(s,\chi)^3 L\left(s,\left(\frac{\cdot}{D}\right)\chi\right) H^{(3)}(s),
\end{equation*}
where $H^{(3)}(s)$ is given by some Euler product in $\Re(s)>1/2$, which is absolutely convergent in this region.
\end{lem}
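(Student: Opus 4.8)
The plan is to prove the identity as one between Euler products in $\Re(s)>1$, handled prime by prime, and then to read off the convergence of $H^{(3)}(s)$ directly from the local factors. Since $\sig$ is multiplicative and $D_\chi(s)$ is absolutely convergent for $\Re(s)>1$, in that region $D_\chi(s)=\prod_p D_{\chi,p}(s)$ with local factor $D_{\chi,p}(s)=\sum_{l\geq 0}\sig(p^l)\chi(p^l)p^{-ls}$, while $L(s,\chi)^3L\big(s,\big(\tfrac{\cdot}{D}\big)\chi\big)=\prod_p(1-\chi(p)p^{-s})^{-3}\big(1-\big(\tfrac{D}{p}\big)\chi(p)p^{-s}\big)^{-1}$. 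I would simply \emph{define} $H^{(3)}(s):=\prod_p H^{(3)}_p(s)$ with
\begin{equation*}
H^{(3)}_p(s):=D_{\chi,p}(s)^4\,(1-\chi(p)p^{-s})^3\,\big(1-\big(\tfrac{D}{p}\big)\chi(p)p^{-s}\big);
\end{equation*}
the asserted factorization then holds by construction for $\Re(s)>1$, and the whole lemma reduces to showing that $\prod_p H^{(3)}_p(s)$ converges absolutely for $\Re(s)>1/2$.

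The two ``easy'' families of primes contribute nothing or almost nothing. If $p\nmid D$ is split then $\big(\tfrac{D}{p}\big)=1$ and $\sig(p^l)=1$ for all $l$, so $D_{\chi,p}(s)=(1-\chi(p)p^{-s})^{-1}$ and $H^{(3)}_p(s)=1$ identically. If $p\mid D$ then $\big(\tfrac{D}{p}\big)=0$ and again $\sig(p^l)=1$, so $H^{(3)}_p(s)=(1-\chi(p)p^{-s})^{-1}$; there are only finitely many such primes and each of these factors is holomorphic and non-vanishing for $\Re(s)>0$, so they are harmless. Thus the substance lies entirely at the inert primes.

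For an inert prime $p\nmid D$, write $x=\chi(p)p^{-s}$, so $|x|=p^{-\Re(s)}$ and, using $\big(\tfrac{D}{p}\big)=-1$, $H^{(3)}_p(s)=F_p(x)^4(1-x)^3(1+x)$ with $F_p(x):=\sum_{l\geq 0}\sig(p^l)x^l$. From the closed forms for $\sig(p^l)$ I would extract just two facts: (i) $0\leq\sig(p^l)\leq 1$ for every $l\geq 0$ (with $\sig(1)=1$); and (ii) $\sig(p)=\frac{1+p^{-2}}{2(1+p^{-1})}=\tfrac12+O(1/p)$. By (i), the power series $F_p(x)$, and hence $F_p(x)^4(1-x)^3(1+x)=1+c_px+\sum_{k\geq 2}b_{k,p}x^k$, has coefficients bounded polynomially in the degree, uniformly in $p$ (the $x^k$-coefficient of $F_p^4$ is a $\binom{k+3}{3}$-fold sum of products of four numbers in $[0,1]$, and $(1-x)^3(1+x)$ contributes only finitely many bounded coefficients). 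By (ii), comparing with $(1-x)^{-3}(1+x)^{-1}=1+2x+O(x^2)$ gives $c_p=4\sig(p)-2=O(1/p)$. Hence, for $\Re(s)>1/2$,
\begin{equation*}
|H^{(3)}_p(s)-1|\;\ll\;|c_p|\,p^{-\Re(s)}+\sum_{k\geq 2}|b_{k,p}|\,p^{-k\Re(s)}\;\ll\;p^{-1-\Re(s)}+p^{-2\Re(s)},
\end{equation*}
and $\sum_p\big(p^{-1-\Re(s)}+p^{-2\Re(s)}\big)<\infty$ on $\Re(s)>1/2$, so $\prod_p H^{(3)}_p(s)$ converges absolutely and locally uniformly there, as required.

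I expect the only genuinely delicate point to be the cancellation of exponents at the inert primes. One might worry that the exact rational-function form of $F_p(x)$ is needed --- it can be obtained by summing the two geometric series coming from the even-$l$ and odd-$l$ parts of $\sig$ --- but the computation above shows that the single numerical coincidence $\sig(p)\to\tfrac12$ (equivalently $4\sig(p)=2+O(1/p)$) is exactly what forces the linear term of $H^{(3)}_p-1$ down to size $O(1/p)$; and because that term carries an extra factor $p^{-\Re(s)}$, size $O(1/p)$ --- rather than outright vanishing --- already suffices for convergence past $\Re(s)=1/2$. Everything else (verifying (i) and (ii) from the definitions, the uniform polynomial bound on the coefficients $b_{k,p}$, and the summations above) is routine.
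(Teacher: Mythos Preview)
Your proof is correct. Both your argument and the paper's hinge on the same arithmetic input --- that $\sig(p)=\tfrac12+O(1/p)$ at inert primes --- but the packaging differs. The paper proceeds in stages: it first peels off the dominant factor $(1-\chi(p)/(2p^s))^{-1}$ at each inert prime to define an auxiliary $H^{(1)}(s)$, then squares $D_\chi(s)$ and regroups against $L(s,\chi)$ to get $H^{(2)}(s)$, and finally squares again and matches with $L(s,\chi)^3L(s,(\tfrac{\cdot}{D})\chi)$ to arrive at an explicit Euler-product formula for $H^{(3)}(s)$. You instead declare $H^{(3)}$ to be the quotient from the outset and verify convergence in one step by reading off the linear coefficient $c_p=4\sig(p)-2=O(1/p)$ of the local factor at inert primes. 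Your route is shorter and conceptually cleaner for establishing the lemma as stated; the paper's route, however, yields a closed-form Euler product for $H^{(3)}(s)$, and that explicit expression is exactly what is fed into the next lemma to compute the constant $G(1)$ in the Selberg--Delange expansion. So if you adopt your argument in place of the paper's, you would still need to unwind $H^{(3)}_p(1)$ explicitly at each type of prime when you come to evaluate $G(1)$.
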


\begin{proof}
We rewrite the Euler product of $D_\chi(s)$ as 
\begin{equation*}
D_\chi(s)= \prod_{\left(\frac{D}{p}\right)=1}\!\!\left(1-\frac{\chi(p)}{p^s}\right)^{-1} \!\!\! \prod_{\left(\frac{D}{p}\right)=-1}\!\!\left(1-\frac{\chi(p)}{2p^s}\right)^{-1} \cdot H^{(1)}(s),
\end{equation*}
for 
$$H^{(1)}(s):=\prod_{p|D} \left(1-\frac{\chi(p)}{p^s}\right)^{-1}\!\!\!\!\prod_{\left(\frac{D}{p}\right)=-1}\!\! \!\!\!g_p(s),$$
and
\begin{equation*}
g_p(s):= \left(1-\frac{\chi(p)}{2p^s}\right)\left(1+\sum_{l=1}^\infty\frac{\sig(p^l)\chi(p^l)}{p^{ls}}\right).
\end{equation*}
Note that $H^{(1)}(s)$ is absolutely convergent in $\Re(s)>1/2$. 
We compute the product
\begin{equation*}
\begin{split}
L(s,\chi)L\left(s,\left(\frac{\cdot}{D}\right)\chi\right) 
= \prod_{p|D} \left(1-\frac{\chi(p)}{p^s}\right)^{-1} \!\!\!\prod_{\left(\frac{p}{D}\right)=1}\!\!\left(1-\frac{\chi(p)}{p^s}\right)^{-2} \!\!\!\prod_{\left(\frac{p}{D}\right)=-1}\!\!\left(1-\frac{\chi(p^2)}{p^{2s}}\right)^{-1}.
\end{split}
\end{equation*}
Note that $\smash{(\frac{D}{p})=(\frac{p}{D})}$ since $D\equiv 1$ mod $4$. On the other hand, we consider the square of the Dirichlet function $D_\chi(s)$, which is given by
\begin{equation*}
\begin{split}
D_\chi(s)^2&= \prod_{\left(\frac{D}{p}\right)=1}\!\!\left(1-\frac{\chi(p)}{p^s}\right)^{-2} \!\!\!\!\prod_{\left(\frac{D}{p}\right)=-1}\!\!\left(1-\frac{\chi(p)}{p^s}+\frac{\chi(p^2)}{4p^{2s}}\right)^{-1} \cdot H^{(1)}(s)^2\\
&= \prod_{\left(\frac{D}{p}\right)=1}\!\!\left(1-\frac{\chi(p)}{p^s}\right)^{-2} \!\!\!\!\prod_{\left(\frac{D}{p}\right)=-1}\!\!\left(1-\frac{\chi(p)}{p^s}\right)^{-1} \cdot H^{(2)}(s),
\end{split}
\end{equation*}
for
\begin{equation*}
H^{(2)}(s):= H^{(1)}(s)^2\prod_{\left(\frac{p}{D}\right)=-1}\left[\left(1-\frac{\chi(p)}{p^s}\right)\left(1-\frac{\chi(p)}{p^s}+\frac{\chi(p^2)}{4p^{2s}}\right)^{-1}\right].
\end{equation*}
Hence, we obtain
\begin{equation*}
\begin{split}
D_\chi(s)^4&= \prod_{\left(\frac{D}{p}\right)=1}\!\!\left(1-\frac{\chi(p)}{p^s}\right)^{-4} \!\!\!\!\prod_{\left(\frac{D}{p}\right)=-1}\!\!\left(1-\frac{\chi(p)}{p^s}\right)^{-2} \cdot H^{(2)}(s)^2\\
&=L(s,\chi)^2 H^{(2)}(s)^2\prod_{\left(\frac{D}{p}\right)=1}\!\!\left(1-\frac{\chi(p)}{p^s}\right)^{-2} \prod_{p|D}\left(1-\frac{\chi(p)}{p^s}\right)^2\\
&=L(s,\chi)^2 H^{(2)}(s)^2L(s,\chi)L\left(s,\left(\frac{\cdot}{D}\right)\chi\right)\prod_{\left(\frac{D}{p}\right)=-1}\!\!\left(1-\frac{\chi(p^2)}{p^{2s}}\right)\prod_{p|D}\left(1-\frac{\chi(p)}{p^s}\right)^3.
\end{split}
\end{equation*}
Let $H^{(3)}(s)$ be given by
\begin{equation*}
H^{(3)}(s):=H^{(2)}(s)^2\prod_{\left(\frac{p}{D}\right)=-1}\!\!\left(1-\frac{\chi(p^2)}{p^{2s}}\right)\prod_{p|D}\left(1-\frac{\chi(p)}{p^s}\right)^3,
\end{equation*}
and note that $H^{(3)}(s)$ is absolutely convergent in $\Re(s)>1/2$. We summarize our calculation above as 
\begin{equation*}
D_\chi(s)^4= L(s,\chi)^3 L\left(s,\left(\frac{\cdot}{D}\right)\chi\right) H^{(3)}(s),
\end{equation*}
 which completes the proof of the lemma
\end{proof}

Next, we evaluate the sum $\Sigma_1(\chi)$ asymptotically for the trivial character $\chi=\chi_0$, and show that the contribution from all non-trivial characters is negligible or corresponds to lower order terms.

\begin{lem}\label{lem5}
a) One has
\begin{equation*}
\Sigma_1(\chi_0)=\frac{N'}{(\log N')^{1/4}} \sum_{k=0}^P \frac{\lam_k}{(\log N')^{k}} + O_{D,P} \left(\frac{N'}{(\log N')^{1/4+P+1}}\right),
\end{equation*}
for some real constants $\lam_k$, $0\leq k\leq P$. More precisely, one has $\lam_0= \frac{G(1)}{\Gam\left(\frac{3}{4}\right)}$, for 
\begin{equation*}
G(1):=\prod_{p|T''}\!\!\left(1-p^{-1}\right)^{3/4}\!\!\!\! \prod_{p|\frac{D}{(D,T'')}}\!\!\!\!\!\left(1-p^{-1}\right)^{-1/4} L\left(1,\left(\frac{\cdot}{D}\right)\chi_0\right)^{1/4} \!\!\!\!\prod_{\left(\frac{D}{p}\right)=-1} \!\!\!\!\! c_p
\end{equation*}
and
\begin{equation*}
c_p= (1-p^{-1})^{3/4} (1+p^{-1})^{1/4} \left(1+\sum_{l=1}^\infty \frac{\sig(p^l)}{p^l}\right).
\end{equation*}
Furthermore, the product defining $G(1)$ is absolutely convergent and $G(1)>0$. The constants $\lam_k$ are given by $\lam_k=\lam_k\left(\frac{3}{4}\right)$, as defined in equation (15) in \S II.5 of \cite{Tenenbaum}.\vspace{0.1cm}\\
b) Let $A_0>0$ be some real parameter and assume that $T''\leq (\log N')^{A_0}$. Then there is a (ineffective) constant $C(A_0)$ with the following property. If $\chi\neq \chi_0$ and $\chi \left(\frac{\cdot}{D}\right)$ is a non-trivial character, then one has the bound
\begin{equation*}
\Sigma_1(\chi)\ll N' e^{-C(A_0)\sqrt{\log N'}}.\vspace{0.1cm}
\end{equation*}
c) If $\chi$ is a non-trivial character modulo $T''$ such that $\chi\left(\frac{\cdot}{D}\right)$ is the trivial character modulo $D$, then one has
\begin{equation*}
\Sigma_1(\chi)= \frac{N'}{(\log N')^{3/4}} \sum_{k=0}^P \frac{\mu_k}{(\log N')^{k}} +O_{D,P} \left(\frac{N'}{(\log N')^{3/4+P+1}}\right),
\end{equation*}
for some real numbers $\mu_k$. 
\end{lem}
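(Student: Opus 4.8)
The plan is to derive all three parts from the Selberg--Delange method in the form of \S II.5 of \cite{Tenenbaum}, feeding in Lemma \ref{lem4} as the statement about the behaviour of $D_\chi(s)$ near $s=1$. By Lemma \ref{lem4} we have
\begin{equation*}
D_\chi(s)^4 = L(s,\chi)^3\, L\!\left(s,\left(\frac{\cdot}{D}\right)\chi\right) H^{(3)}(s),
\end{equation*}
with $H^{(3)}$ holomorphic and, being an absolutely convergent Euler product there, bounded on compacta for $\Re(s)>1/2$. Since $D$ is not a perfect square, $\left(\frac{\cdot}{D}\right)$, and hence $\left(\frac{\cdot}{D}\right)\chi_0$, is a non-principal real character. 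First I would record the order of the pole of $D_\chi(s)^4$ at $s=1$: it is $3$ when $\chi=\chi_0$ (the factor $L(s,\chi_0)=\zeta(s)\prod_{p\mid T''}(1-p^{-s})$ has a triple pole there, while $L\!\left(s,\left(\frac{\cdot}{D}\right)\chi_0\right)$ is holomorphic and non-zero at $s=1$); it is $0$ when $\chi\neq\chi_0$ and $\left(\frac{\cdot}{D}\right)\chi$ is non-principal; and it is $1$ when $\chi\neq\chi_0$ but $\left(\frac{\cdot}{D}\right)\chi$ is principal (now $L\!\left(s,\left(\frac{\cdot}{D}\right)\chi\right)$ carries a simple pole while $L(s,\chi)$ is holomorphic and non-zero at $1$). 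Accordingly one writes $D_\chi(s)=\zeta(s)^{\kappa}G_\chi(s)$ with $\kappa=3/4,\,0,\,1/4$ respectively, the branch being fixed by agreement with the Dirichlet series for $\Re(s)>1$, where $G_\chi$ is holomorphic and of polynomial growth in a region $\Re(s)>1-c/\log\!\bigl(T''(2+|\Im s|)\bigr)$; here one invokes the classical zero-free region for $L(s,\chi)$, $L\!\left(s,\left(\frac{\cdot}{D}\right)\chi\right)$ and $\zeta(s)$, together with Siegel's theorem to push past a possible exceptional real zero when the character involved is quadratic --- this last ingredient is the source of the ineffective constant $C(A_0)$ and of the ineffectivity noted after Theorem \ref{theo1}. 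The hypothesis $T''\le(\log N')^{A_0}$ guarantees that this region contains a segment $\Re(s)\ge 1-c(A_0)/\sqrt{\log N'}$.

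For part a) I would apply the Selberg--Delange theorem (\S II.5 of \cite{Tenenbaum}) to $D_{\chi_0}(s)=\zeta(s)^{3/4}G(s)$, obtaining
\begin{equation*}
\Sigma_1(\chi_0)=\frac{N'}{(\log N')^{1/4}}\sum_{k=0}^{P}\frac{\lam_k}{(\log N')^{k}}+O_{D,P}\!\left(\frac{N'}{(\log N')^{1/4+P+1}}\right),
\end{equation*}
with $\lam_k=\lam_k\!\left(\frac34\right)$ the coefficients of equation~(15) of \S II.5 of \cite{Tenenbaum} attached to $G$, and in particular $\lam_0=G(1)/\Gam\!\left(\frac34\right)$. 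It then remains to identify $G(1)$ with the product in the statement. From $L(s,\chi_0)=\zeta(s)\prod_{p\mid T''}(1-p^{-s})$ and Lemma \ref{lem4} one gets $G(s)^4=\prod_{p\mid T''}(1-p^{-s})^3\,L\!\left(s,\left(\frac{\cdot}{D}\right)\chi_0\right)H^{(3)}(s)$; putting $s=1$ and unwinding $H^{(3)}(1)$ through the chain $H^{(1)}\!\to\!H^{(2)}\!\to\!H^{(3)}$ of the proof of Lemma \ref{lem4} --- using $1-p^{-1}+\frac14 p^{-2}=(1-\frac12 p^{-1})^2$ so that the inert Euler factors collapse to $c_p^{\,4}$, while for $\chi_0$ the factors at primes $p\mid D$, $p\nmid T''$ assemble to $\prod_{p\mid D/(D,T'')}(1-p^{-1})^{-1}$ --- produces exactly the stated formula for $G(1)^4$, hence for $G(1)$ after taking the positive fourth root. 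Positivity of $G(1)$ is then clear: $L\!\left(1,\left(\frac{\cdot}{D}\right)\chi_0\right)>0$ because $\left(\frac{\cdot}{D}\right)\chi_0$ is a non-principal real character, the finite factors are positive, and $\sig(p^l)=O(1)$ gives $c_p=1+O(p^{-2})$, so $\prod_{(\frac{D}{p})=-1}c_p$ converges to a positive number.

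Part b) is then immediate from the first paragraph: when $\kappa=0$ the Dirichlet series $D_\chi(s)$ is itself holomorphic and polynomially bounded on $\Re(s)\ge 1-c(A_0)/\sqrt{\log N'}$, so inserting this into the truncated Perron formula already set up for $\Sigma_1(\chi)$, shifting the contour to that line and estimating the horizontal and vertical pieces in the standard way yields $\Sigma_1(\chi)\ll N'e^{-C(A_0)\sqrt{\log N'}}$. Part c) is again the Selberg--Delange theorem, now applied to $D_\chi(s)=\zeta(s)^{1/4}G_\chi(s)$, which gives
\begin{equation*}
\Sigma_1(\chi)=\frac{N'}{(\log N')^{3/4}}\sum_{k=0}^{P}\frac{\mu_k}{(\log N')^{k}}+O_{D,P}\!\left(\frac{N'}{(\log N')^{3/4+P+1}}\right),
\end{equation*}
with $\mu_k=\lam_k\!\left(\frac14\right)$ evaluated at the corresponding $G_\chi$.

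The hard part will be verifying the hypotheses of the Selberg--Delange theorem uniformly over the relevant family: one needs a zero-free region for $L(s,\chi)$ and $L\!\left(s,\left(\frac{\cdot}{D}\right)\chi\right)$ valid for every character $\chi$ of modulus $T''\le(\log N')^{A_0}$, the necessarily ineffective treatment of the possible Siegel zero of the quadratic character involved (which is what makes $C(A_0)$ and the dependence on $P$ ineffective), and the check that $G_\chi(s)=D_\chi(s)\zeta(s)^{-\kappa}$ --- equivalently $L(s,\chi)^3 L\!\left(s,\left(\frac{\cdot}{D}\right)\chi\right)H^{(3)}(s)\,\zeta(s)^{-4\kappa}$ --- is holomorphic and meets the polynomial growth bound required in \S II.5 of \cite{Tenenbaum} throughout that region, for which one combines convexity bounds for Dirichlet $L$-functions with the boundedness of $H^{(3)}$ in $\Re(s)>1/2$. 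One must also make sure that the chosen branch of $\bigl(L(s,\chi)^3 L(s,\left(\frac{\cdot}{D}\right)\chi) H^{(3)}(s)\bigr)^{1/4}$ coincides with the Dirichlet series $D_\chi(s)$ for $\Re(s)>1$, which is clear from the Euler product but should be recorded. By contrast, the identification of the constant $G(1)$ in part a) and the bookkeeping of the error terms are routine.
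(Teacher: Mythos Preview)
Your approach is essentially the same as the paper's: apply the Selberg--Delange theorem from \S II.5 of \cite{Tenenbaum} to $D_{\chi_0}(s)\zeta(s)^{-3/4}$ for part~a), handle part~b) via Perron's formula and a contour shift into the zero-free region (the paper merely says the argument is ``similar to the Siegel--Walfisz theorem (but simpler)'' and omits details), and apply Selberg--Delange again with exponent $1/4$ for part~c). Your treatment is in fact more explicit than the paper's in several places, notably in spelling out the pole orders of $D_\chi(s)^4$ in the three cases, the role of the Siegel zero, and the branch issue for the fourth root; the paper's own proof is quite terse on all of these. One small point you could add, which the paper does make explicit, is that for parts~a) and~c) the uniformity in $T''$ is automatic because there are only finitely many principal characters $\chi_0\bmod T''$ (and likewise finitely many $\chi$ with $\chi\!\left(\frac{\cdot}{D}\right)$ principal) as $T''$ ranges over moduli supported on the primes dividing $D$; this sidesteps any need for a Siegel--Walfisz-type uniformity argument in those two parts.
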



Having established Lemma \ref{lem4}, we are already prepared to use the Selberg-Delange method to evaluate $\Sigma_1(\chi)$.

\begin{proof}
First, we prove a), i.e.~treat the case $\chi=\chi_0$. Note that, for $D$ fixed, there is only a finite number of trivial characters modulo $T''$, where $T''$ varies over all moduli which are composed of primes dividing $D$. Hence, all our estimates for $\Sig_1(\chi_0)$ are uniform in $T''$ and the implicit constants depend only on~$D$.\par
By Lemma \ref{lem4}, we see that the function 
\begin{equation}\label{eqn19}
G(s):=D_{\chi_0}(s)\zet(s)^{-\frac{3}{4}}
\end{equation}
may be continued as a holomorphic function to the region $\sig\geq 1-c_0/(1+\log(3+|t|))$, where $s=\sig +it$.
Since $H^{(3)}(s)$ is given as an Euler product in $\Re(s)>1/2$, which is absolutely convergent in this region, we may apply Theorem 3 in \S II.5 in \cite{Tenenbaum}. We obtain for $N'\geq 3$ the asymptotic formula
\begin{equation*}
\Sigma_1(\chi_0)=\frac{N'}{(\log N')^{1/4}} \sum_{k=0}^P \frac{\lam_k}{(\log N')^{k}} + O_{D,P} \left(\frac{N'}{(\log N')^{1/4+P+1}}\right),
\end{equation*}
where $\smash{\lam_k=\lam_k\!\!\left(\frac{3}{4}\right)}$ is defined as in equation (15) in \S II.5 in \cite{Tenenbaum}. In particular, one has $\smash{\lam_0= \frac{G(1)}{\Gam\left(\frac{3}{4}\right)}}$. To find the constant $G(1)$, we recall that Lemma \ref{lem4}, together with the definition (\ref{eqn19}) of $G$, shows 
\begin{equation*}
G(s)= \prod_{p|T''} \!\left(1-p^{-s}\right)^{3/4} L\left(s,\left(\frac{\cdot}{D}\right)\chi_0\right)^{1/4} H^{(3)}(s)^{1/4}.
\end{equation*}
A short calculation reveals that
\begin{equation*}
G(1)= \prod_{p|T''}\!\left(1-p^{-1}\right)^{3/4} \!\!\!\!\prod_{p|\frac{D}{(D,T'')}}\!\!\!\!\left(1-p^{-1}\right)^{-1/4} L\left(1,\left(\frac{\cdot}{D}\right)\chi_0\right)^{1/4} \!\!\!\prod_{\left(\frac{D}{p}\right)=-1}\!\! \!\! c_p,
\end{equation*}
with constants $c_p$ given by
\begin{equation*}
c_p= (1-p^{-2})^{1/4}\left(1-\frac{1}{2p}\right) \left(1+\sum_{l=1}^\infty \frac{\sig(p^l)}{p^l}\right) \left( 1-\frac{1}{p}\right)^{1/2} \left(1-\frac{1}{p}+\frac{1}{4p^2}\right)^{-1/2}.
\end{equation*}
This can be simplified to
\begin{equation*}
c_p= (1-p^{-1})^{3/4} (1+p^{-1})^{1/4} \left(1+\sum_{l=1}^\infty \frac{\sig(p^l)}{p^l}\right).\vspace{0.1cm}
\end{equation*}
b) Similarly, one can use Lemma \ref{lem4} in combination with an application of Perron's formula to deduce the upper bounds on $\Sig_1(\chi)$, for $\chi\neq \chi_0$ and $\chi \left(\frac{\cdot}{D}\right)$ non-trivial. The computations are similar to the Siegel-Walfisz theorem (but simpler) and we omit the details here.\vspace{0.1cm}\\
c) The last part of the lemma follows in a fashion similar to the first part, via an application of the Selberg-Delange method as in \S II.5 in \cite{Tenenbaum}.
\end{proof}


Let $A_0>0$ be some real parameter and $T''\leq (\log N')^{A_0}$. 
From Lemma \ref{lem5}, we now conclude in combination with equation (\ref{eqnSigma}) that
\begin{equation}\label{eqnSigma1}
\Sig_1= \frac{2N}{\varphi(T'')T'\left(\log \frac{2N}{T'}\right)^{1/4}} \sum_{k=0}^{2P} \frac{\lamtil_k(T'')}{\left(\log \frac{2N}{T'}\right)^{k/2}} +O_{D,P}\left(\frac{N}{T'}\left(\frac{1}{\varphi(T'')\left(\log \frac{2N}{T'}\right)^{P+3/4}}+e^{-C(A_0)\sqrt{\log N'}}\right)\right),
\end{equation}
where the constants $\lamtil_k(T'')$ are defined via
\begin{equation*}
\lamtil_{2k}(T'')=\lam_k,\quad \quad \lamtil_{2k+1}(T'')= \mu_k,
\end{equation*}
and $\lam_k$ and $\mu_k$ are as in Lemma \ref{lem5}.\par
We furthermore define
\begin{equation*}
\Sigma_2:= \!\!\!\!\!\sum_{\substack{1\leq n\leq 2N\\ n\equiv b-a \mmod T}}\!\!\!\!\!\! n\sig(n).
\end{equation*}
We evaluate $\Sig_2$ using partial summation and our asymptotic for $\Sig_1$ in (\ref{eqnSigma1}). This leads to
\begin{equation}\label{eqnSigma2}
\Sig_2=\frac{2N^2}{\varphi(T'')T'\left(\log \frac{2N}{T'}\right)^{1/4}} \sum_{k=0}^{2P} \frac{\lamtil'_i(T'')}{\left(\log \frac{2N}{T'}\right)^{k/2}} +O_{D,P}\left(\frac{N^2}{T'}\left(\frac{1}{\varphi(T'')\left(\log \frac{2N}{T'}\right)^{P+3/4}}+e^{-C(A_0)\sqrt{\log N'}}\right)\right),
\end{equation}
with real constants $\lamtil'_i(T'')$ and $\lamtil'_0(T'')=\lam_0$. 

We are now in a position to collect our results of this section in the following theorem.

\begin{theo}\label{thm5}
Assume that $D$ is some positive integer with $D\equiv 1 \mmod 8$, which is square-free, and $T=\prod_{i=1}^r p_i^{l_i}$ be a modulus composed of primes dividing $D$. Let $a$ and $b$ be congruence classes modulo $T$, which satisfy $(a,b)\in \calH(\bfl)$. Finally, let $A_0>0$ be some real parameter and $\smash{T\leq (\log N)^{A_0}}$. Then there are real constants $c_i(T'')$ with $\smash{c_0(T'')=\frac{G(1)}{\Gam\left(\frac{3}{4}\right)}}$ such that one has
\begin{equation*}
\RBR (N;T,a,b)= \frac{4N^2}{T\phi(T'')T'\left(\log \frac{2N}{T'}\right)^{1/4}} \sum_{k=0}^{2P} \frac{c_k(T'')}{\left(\log \frac{2N}{T'}\right)^{k/2}} +O_{D,P,A_0}\left(\frac{N^2}{T^2 (\log N)^{P+3/4}}\right).
\end{equation*}
The constant $G(1)$ is given as in Lemma \ref{lem5}, and $T'$ and $T''$ are defined by $T'=\gcd (b-a,T)$ and $T''=T/T'$. The constant in the last error term is ineffective in $A_0$. Moreover, one has $c_k(T'')\ll_{D,P}1$, for all $0\leq k\leq 2P$. 
\end{theo}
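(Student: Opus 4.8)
The plan is to assemble the theorem from the ingredients already established in this section: the remaining work is a partial summation together with a careful accounting of the error terms, the crucial point being that every prime dividing $T$ belongs to the fixed finite set $\{p_1,\dots,p_r\}$. First I would start from equation (\ref{eqn4.10}) and insert the expansion $r(n)=\frac{2N-|n|+1}{T}\sig(n)+\ra(n)$ from Lemma \ref{lem4.4}. Splitting the range $|n|\le 2N$ into positive and negative $n$ and using that the bound for $\ra(n)$ in Lemma \ref{lem4.4} depends only on the inert prime divisors of $n$, hence is the same for $n$ and $-n$, Lemma \ref{lem4.5} gives $\sum_{|n|\le 2N}\ra(n)\ll_\eps N^{7/4+\eps}$, uniformly in $T$. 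Together with the $O_\eps(D^\eps N^{1+\eps})$ already in (\ref{eqn4.10}), and using $T\le(\log N)^{A_0}$, these errors are $\ll_{D,P,A_0}N^2/(T^2(\log N)^{P+3/4})$, so that
\begin{equation*}
\RBR(N;T,a,b)=\frac1T\!\!\!\sum_{\substack{n\equiv a-b \mmod T\\ |n|\le 2N}}\!\!\!\!(2N-|n|+1)\sig(n)+O_{D,P,A_0}\!\left(\frac{N^2}{T^2(\log N)^{P+3/4}}\right),
\end{equation*}
the $n=0$ term corresponding to a singular surface and being absorbed.

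Next I would evaluate the main sum. Writing $n=-m$ for the negative values and using $\sig(-m)=\sig(-1)\sig(m)=\sig(m)$, it equals $(2N+1)(\Sigma_1+\Sigma_1^\sharp)-(\Sigma_2+\Sigma_2^\sharp)$, where $\Sigma_1,\Sigma_2$ are the sums in (\ref{eqn4.31}) and in the definition of $\Sigma_2$ just before (\ref{eqnSigma2}) over the class $n\equiv b-a$, and $\Sigma_1^\sharp,\Sigma_2^\sharp$ the analogous sums over $n\equiv a-b$. For $\Sigma_1,\Sigma_2$ I would insert (\ref{eqnSigma1}) and (\ref{eqnSigma2}); the derivation of these, via (\ref{eqnSigma}), Lemma \ref{lem4} and the Selberg--Delange evaluation of Lemma \ref{lem5}, applies verbatim to $\Sigma_1^\sharp,\Sigma_2^\sharp$, the only change being $d\mapsto-d$ in (\ref{eqnSigma}), which leaves $T'=\gcd(b-a,T)$ and $T''=T/T'$ unaltered and touches only the twists $\overline\chi(d)$ for $\chi\neq\chi_0$, i.e. the lower-order coefficients. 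Performing the partial summation $(2N+1)\Sigma_1-\Sigma_2$ for each residue class, discarding the resulting $O(N(\log N)^{-1/4})$ contribution, and adding the two pieces, they combine into
\begin{equation*}
\frac{4N^2}{\varphi(T'')T'\left(\log \frac{2N}{T'}\right)^{1/4}}\sum_{k=0}^{2P}\frac{c_k(T'')}{\left(\log \frac{2N}{T'}\right)^{k/2}},
\end{equation*}
with the $c_k(T'')$ explicit linear combinations of the constants $\lamtil_k(T''),\lamtil'_k(T'')$ of (\ref{eqnSigma1})--(\ref{eqnSigma2}) and their $a-b$ analogues; since the $\chi_0$-term contributes the same leading coefficient in both classes, $c_0(T'')=\lam_0=G(1)/\Gam(3/4)$. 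Dividing by $T$ yields the claimed main term, with denominator $T\phi(T'')T'$.

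It then remains to verify the two side statements. The bound $c_k(T'')\ll_{D,P}1$ holds because $\lam_k,\mu_k$ are the Selberg--Delange constants $\lam_k(3/4)$ attached to $G(s)$, and $G(s)$ depends on $T''$ only through the factor $\prod_{p|T''}(1-p^{-s})^{3/4}$, whose primes lie in $\{p_1,\dots,p_r\}$; hence for fixed $D,P$ only finitely many values of $\lam_k,\mu_k$ occur. For the error term, the errors inherited from (\ref{eqnSigma1})--(\ref{eqnSigma2}) after division by $T$ are, up to constants, $\frac{N^2}{T^2(\log\frac{2N}{T'})^{P+3/4}}\cdot\frac{T''}{\varphi(T'')}+\frac{N^2}{TT'}e^{-C(A_0)\sqrt{\log N'}}$; here $\frac{T''}{\varphi(T'')}=\prod_{p|T''}(1-p^{-1})^{-1}\le\prod_{p|D}(1-p^{-1})^{-1}\ll_D 1$, and $\log\frac{2N}{T'}=\log N'\ge\frac12\log N$ for $N$ large (since $T'\le(\log N)^{A_0}$), so both terms are $\ll_{D,P,A_0}N^2/(T^2(\log N)^{P+3/4})$; one should replace $A_0$ by $A_0+1$ when invoking Lemma \ref{lem5} with $N'$ in place of $N$.

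The main obstacle here is organisational rather than analytic: all the genuine input --- the P\'olya--Vinogradov estimates, the factorisation $D_\chi(s)^4=L(s,\chi)^3L(s,(\frac{\cdot}{D})\chi)H^{(3)}(s)$, the Selberg--Delange expansion --- is already in place through Lemmas \ref{lem4.4}--\ref{lem5}. What requires care is keeping the dependence of every implied constant on $D$, $P$, $A_0$ and $T$ transparent, and in particular recognising that the apparently weaker factor $1/(T'\varphi(T''))$ occurring in the inherited error terms sharpens to the claimed $1/T^2$ precisely because all prime divisors of $T$ are confined to the fixed set $\{p_1,\dots,p_r\}$, so that $T''/\varphi(T'')\ll_D 1$.
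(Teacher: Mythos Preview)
Your proposal is correct and follows essentially the same route as the paper's own proof: both start from (\ref{eqn4.10}), insert Lemma~\ref{lem4.4}, control the $\ra(n)$ contribution via Lemma~\ref{lem4.5}, split into the two residue classes $a-b$ and $b-a$ using $\sig(-1)=1$, and then feed in the asymptotics (\ref{eqnSigma1}) and (\ref{eqnSigma2}). You are in fact somewhat more explicit than the paper on two points it leaves implicit, namely the justification of $c_k(T'')\ll_{D,P}1$ (via the finiteness of the primes dividing $T''$) and the absorption of the Siegel--Walfisz and $T''/\varphi(T'')$ factors into the stated error, which is exactly the mechanism the paper relies on.
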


\begin{proof}
We start with the relation from equation (\ref{eqn4.10}), which asserts that
\begin{equation*}
\RBR (N;T,a,b)= \!\!\sum_{\substack{n\equiv a-b \mmod T\\ |n|\leq 2N}}\!\!\!\!\!\! r(n)+O_\eps (D^{\eps} N^{1+\eps}).
\end{equation*}
We decompose $r(n)$ according to Lemma \ref{lem4.4}, and obtain
\begin{equation*}
\RBR (N;T,a,b)= \!\!\!\!\!\!\sum_{\substack{n\equiv a-b \mmod T\\ |n|\leq 2N}}\!\!\frac{2N-|n|+1}{T} \sig(n)+ \sum_{|n|\leq 2N}\!\!\! r_1(n) +O_\eps (D^{\eps} N^{1+\eps}).
\end{equation*}
Lemma \ref{lem4.5} implies that
\begin{equation*}
\RBR (N;T,a,b)= \!\!\sum_{\substack{n\equiv a-b \mmod T\\ |n|\leq 2N}}\!\!\!\!\!\!\frac{2N-|n|+1}{T} \sig(n)+O_{\eps,D}(  N^{7/4+\eps}).
\end{equation*}
We recall that $\sig(-1)=1$ and hence
\begin{equation*}
\sum_{\substack{n\equiv a-b \mmod T\\ -2N\leq n<0}}\!\!\! \sig(n)= \sum_{\substack{ n\equiv b-a\mmod T\\ 1\leq n\leq 2N}} \!\!\!\sig(n).
\end{equation*}
This is evaluated in the very same way as $\Sig_1$ (see equation (\ref{eqn4.31})). A combination of the asymptotics in (\ref{eqnSigma1}) and (\ref{eqnSigma2}) leads to
\begin{equation*}
\begin{split}
\RBR (N;T,a,b)
= \frac{4N^2}{T\phi(T'')T'\left(\log \frac{2N}{T'}\right)^{1/4}} \sum_{k=0}^{2P} \frac{c_k(T'')}{\left(\log \frac{2N}{T'}\right)^{k/2}} +O_{D,P,A_0}\left(\frac{N^2}{T^2 (\log N)^{P+3/4}}\right),
\end{split}
\end{equation*}
with real constants $c_k(T'')$ and $c_0(T'')= \frac{G(1)}{\Gam\left(\frac{3}{4}\right)}$. 
This completes the proof of the theorem.
\end{proof}

\section{Proof of the main theorem}
Let $\RBR (N)$ be the number of del Pezzo surfaces $S^{(D;A,B)}$ of degree four in the family (\ref{eqn0}) of height at most $H(S^{(D;A,B)})\leq N$ that are counterexamples to the Hasse principle explained by some Brauer-Manin obstruction. In order to compute $\RBR (N)$, we argue similarly as for the counting function $R_D^\loc (N)$ in section \ref{local}. We have
\begin{equation*}
\RBR (N)= \sum_{\bfl\in \N^r} \sum_{(a,b)\in \calH(\bfl)} \!\!\RBR \left(N;\prod_{i=1}^r p_i^{l_i},a,b\right)+O(N).
\end{equation*}
The term $O(N)$ here comes from all the tuples $(A,B)$, for which one of the $G_j(A,B)=0$. We next truncate the sum at a positive integer $L$. We use the vector notation $1\leq \bfl\leq L$ to express that $1\leq l_i\leq L$ for all $1\leq i\leq r$. We rewrite the expression for $\RBR (N)$ as
\begin{equation*}
\RBR (N)= \sum_{1\leq \bfl \leq L} \sum_{(a,b)\in \calH(\bfl)} \!\!\RBR\!\! \left(N;\prod_{i=1}^r p_i^{l_i},a,b\right)+O(N)+E_5,
\end{equation*}
with
\begin{equation*}
E_5 \ll \sum_{i=1}^r \sum_{j=1}^m \sharp\{|A|,|B|\leq N: p_i^L| G_j(A,B)\}.
\end{equation*}
Here the polynomials $G_j(A,B)$ are defined as in Lemma \ref{lem1}. As in the proof of Lemma \ref{lem1b}, there is a real constant $\tet_1>0$ such that
\begin{equation*}
\sharp\{|A|,|B|\leq N: p_i^L |G_j(A,B)\}\ll N^2p_i^{-\tet_1 L}+Np_i^{2L}.
\end{equation*}
Let $A_0>0$ be a real parameter to be chosen later. We let $L$ be the largest integer such that  $\smash{p_r^L\leq (\log N)^{A_0/r}}$. In particular, we have $\smash{L\leq \frac{A_0 \log \log N}{r\log p_r}}$. We hence may apply Theorem \ref{thm5} to evaluate $\smash{\RBR (N;\prod_{i=1}^r p_i^{l_i},a,b)}$ and obtain
\begin{equation*}
\RBR (N)= 4N^2  \sum_{1\leq \bfl\leq L} \sum_{(a,b)\in \calH(\bfl)}\sum_{k=0}^{2P} \frac{c_k(T'')}{T\phi(T'')T' \left(\log \frac{2N}{T'}\right)^{1/4+k/2}} +O(N)+ E_5+E_6,
\end{equation*}
with an error term $E_6$ bounded by
\begin{equation*}
E_6\ll_{D,P,A_0} (\log \log N)^{r} \frac{N^2}{(\log N)^{P+3/4}}.
\end{equation*}
We next develop the expression 
\begin{equation*}
\frac{1}{(\log (2N)- \log T')^{1/4+k/2}}
\end{equation*}
into a series of powers of $\log 2N$ and hence may rewrite this as
\begin{equation*}
\RBR (N)= 4N^2  \sum_{k=0}^{2P}\sum_{1\leq \bfl\leq L} \sum_{(a,b)\in \calH(\bfl)} \frac{c_k'(T'')}{T\phi(T'')T' \left(\log 2N\right)^{1/4+k/2}} + E_5+ E_6,
\end{equation*}
with coefficients $c_k'(T'')\ll_{D,P,A_0} (\log \log N)^{2P}$ and $c_0'(T'')= c_0(T'')$ for all $T''$. At this point, we also note that $T'$ and $T''$ in general depend on $(a,b)$ by Theorem \ref{thm5}.\par
We claim that the series
\begin{equation*}
\sum_{\bfl\in \N^r} \sum_{(a,b)\in \calH(\bfl)} \frac{c_k'(T'')}{T\phi(T'')T'}
\end{equation*}
is absolutely convergent. Indeed, by Lemma \ref{lem1b}, for any $1\leq i\leq r$, we have the estimate
\begin{equation*}
\begin{split}
\sum_{\substack{\bfl\in \N^r\\ l_i>L}} \sum_{(a,b)\in \calH(\bfl)} \frac{c_k'(T'')}{T\phi(T'')T'} & \ll_{D,P,A_0} \sum_{\substack{\bfl\in \N^r\\ l_i>L}}  \frac{(\log \log N)^{2P}}{\prod_{i=1}^r p_i^{2l_i}} \sharp \calH(\bfl)  \\ &\ll_{D,P,A_0} (\log \log N)^{2P} \sum_{\substack{\bfl\in \N^r\\ l_i>L}}\prod_{i=1}^r p_i^{-\tet_0l_i}\\  &\ll_{D,P,A_0} (\log \log N)^{2P} p_i^{-\tet_0 L}.
\end{split}
\end{equation*}
If we choose $A_0$ sufficiently large, we hence obtain
\begin{equation*}
\RBR (N)= \frac{4N^2}{(\log 2N)^{1/4}} \sum_{k=0}^{2P} \frac{C_k}{(\log 2N)^{k/2}} +O_{D,P} \left(\frac{N^2}{(\log N)^{3/4+P}}\right),
\end{equation*}
with constants $C_k$ of the form
\begin{equation}\label{defCk}
C_k= \sum_{\bfl\in \N^r} \sum_{(a,b)\in \calH(\bfl)} \frac{c_k'(T'')}{T\phi(T'')T'}.
\end{equation}
Moreover, for $k=0$, we specifically obtain
\begin{equation}\label{defC0}
C_0= \sum_{\bfl\in \N^r} \sum_{(a,b)\in \calH(\bfl)} \frac{G(1,T'')}{\Gam\!\left(\frac{3}{4}\right) T\phi(T'')T'},
\end{equation}
where $G(1)=G(1,T'')$ is defined as in Lemma \ref{lem5}. The proof of Theorem \ref{theo1} is now completed by Lemma \ref{Cpos}.\vspace{0.2cm}\qedsymbol

Before we prove that the leading constant $C_0$ is indeed positive, we prepare with two lemmata. The first of them is a modified version of Lemma 6.7 in \cite{JS}. 

\begin{lem}\label{lem6.7}
Let $p>9$ be a prime and $\F_{\!p}$ be the finite field with $p$ elements. Then there are elements $a_0$ and $a_1\in \F_{\!p}$ with the following properties. Both $a_0$ and $a_1$ are squares different from $0,-1$, with $a_i^2+a_i+1\neq 0$, and such that $a_0+1$ is a square, and $a_1+1$ is a non-square.
\end{lem}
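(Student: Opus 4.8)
The plan is to prove this by a standard character sum count. Write $\chi$ for the Legendre symbol modulo $p$, so that $\chi(x)=1$ exactly when $x$ is a nonzero square in $\F_{\!p}$. For $\eta\in\{1,-1\}$ set
\begin{equation*}
N_\eta:=\#\{a\in\F_{\!p}:\chi(a)=1,\ \chi(a+1)=\eta\}.
\end{equation*}
I would first note that any $a$ counted by $N_\eta$ automatically has $a\neq 0$ and $a\neq -1$, so the lemma reduces to showing that, after removing the at most two roots of $X^2+X+1$ in $\F_{\!p}$ (the primitive cube roots of unity), the set counted by $N_1$ is still nonempty, which produces $a_0$, and likewise the set counted by $N_{-1}$ is still nonempty, which produces $a_1$.

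To estimate $N_\eta$, the plan is to expand $\tfrac14\sum_{a\neq 0,-1}(1+\chi(a))(1+\eta\chi(a+1))$ as
\begin{equation*}
N_\eta=\frac14\sum_{\substack{a\in\F_{\!p}\\ a\neq 0,-1}}\Bigl(1+\chi(a)+\eta\,\chi(a+1)+\eta\,\chi\bigl(a(a+1)\bigr)\Bigr),
\end{equation*}
and then to evaluate the four inner sums. The constant term contributes $p-2$; the sums $\sum\chi(a)$ and $\sum\chi(a+1)$ are completed to full sums over $\F_{\!p}$, each equal to $0$, at the cost of the two omitted arguments; and $\sum_{a\in\F_{\!p}}\chi(a^2+a)=-1$ by the classical evaluation of a quadratic character sum with nonvanishing discriminant. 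Collecting the contributions gives the exact formulas $N_1=(p-4-\chi(-1))/4$ and $N_{-1}=(p-\chi(-1))/4$, so $N_\eta\geq (p-5)/4$ in both cases. Subtracting the at most two primitive cube roots of unity leaves at least $(p-13)/4$ admissible values of $a$, whence both $a_0$ and $a_1$ exist once $p$ exceeds the explicit threshold coming from $(p-13)/4>0$.

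Finally I would dispose of the finitely many primes with $9<p$ lying below that threshold by a direct search over $\F_{\!p}$, exhibiting the required $a_0$ and $a_1$ explicitly. I expect this last bookkeeping to be the only genuinely delicate point: the counting bound leaves essentially no slack for the smallest primes in range, so one must be careful both with the two excluded arguments $a=0,-1$ in the character sum and with the requirement that the chosen element avoid the roots of $X^2+X+1$, and for those small primes the verification is done by inspection rather than by the estimate.
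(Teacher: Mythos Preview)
Your character-sum computation is correct and yields the exact counts $N_1=(p-4-\chi(-1))/4$ and $N_{-1}=(p-\chi(-1))/4$, so that $N_\eta\geq(p-5)/4$ and, after excising the at most two roots of $X^2+X+1$, at least $(p-13)/4$ admissible values remain. This is a different route from the paper, which sets $a_0=(u/w)^2$ for a point on the conic $u^2+w^2=v^2$ and argues by excluding bad projective points; your approach is more analytic and gives exact formulas rather than just an inequality, but both should in principle lead to the same threshold.

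The genuine problem is the step you yourself flag as delicate: the direct search at $p=13$ fails for $a_0$. The nonzero squares modulo $13$ are $\{1,3,4,9,10,12\}$; your formula gives $N_1=(13-4-1)/4=2$, and indeed the only $a$ with both $a$ and $a+1$ nonzero squares are $a=3$ and $a=9$. But these are precisely the two roots of $X^2+X+1$ in $\F_{13}$, so no admissible $a_0$ survives. The lemma as stated is therefore false at $p=13$, and the proposal cannot be completed by inspection.

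This is in fact an error in the paper's own proof. The paper asserts there are ``at most four points'' on the conic with $w\neq0$ and $(u/w)^4+(u/w)^2+1=0$; but there are up to four values of $u/w$, and for each such $t$ the equation $(v/w)^2=t^2+1$ may have two solutions, giving up to eight points. With the corrected count the paper's inequality reads $p+1>2+4+8=14$, i.e.\ $p\geq17$, exactly matching your threshold; and at $p=13$ one checks that all $14$ points of the conic are excluded. So your method is sound, and what it actually uncovers is that the statement needs the hypothesis $p\neq13$ (the application to the positivity of $C_0$ then requires a separate argument when $13$ is the largest prime dividing $D$).
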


\begin{proof}
We only consider the case of $a_0$, since the arguments for $a_1$ are identical. To establish the claim in the lemma, it is sufficient to find a (non-trivial) point on the conic $u^2+w^2=v^2$ over $\F_{\! p}$, with $w\neq 0$, $\smash{(\frac{u}{w})^2\neq 0,-1}$ and $\smash{(\frac{u}{w})^4+(\frac{u}{w})^2+1\neq 0}$. In the projective plane, the conic $u^2+w^2=v^2$ has exactly $p+1$ points. There are at most two points with $w=0$, at most four points with $u=0$ or $\smash{(\frac{u}{w})^2=-1}$, and at most four points satisfying $w\neq 0$ and $\smash{(\frac{u}{w})^4+(\frac{u}{w})^2+ 1= 0}$. Hence there is a point with the desired properties as soon as $p+1>10$. 
\end{proof}


\begin{lem}\label{lem6.8}
Assume that $3|D$, $A\equiv -D$ modulo $9$ and $B\equiv 0 $ modulo $9$. Then $S^{(D;A,B)}(\Q_3)\neq \emptyset$, and the Brauer class $\alp$ evaluates constantly to zero on $S^{(D;A,B)}(\Q_3)$.
\end{lem}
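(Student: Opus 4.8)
The plan is to follow the pattern established in the proof of the lower bound for $c_\loc$ in Section~\ref{local}, where the case $p=3$ was handled by exhibiting an explicit smooth $\F_3$-point on the reduction. First I would verify local solubility at $3$: under the hypotheses $3\mid D$, $A\equiv -D\pmod 9$ and $B\equiv 0\pmod 9$, the reduction of $S^{(D;A,B)}$ modulo $3$ is the same as in the case $(a,b)=(0,0)$ considered at the end of Section~\ref{local}, so $(1:1:1:0:0)$ is a smooth point on it and Hensel's Lemma lifts it to a $\Q_3$-point; hence $S^{(D;A,B)}(\Q_3)\neq\emptyset$. This is the routine part.

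For the constancy of $\alp$, the strategy is to show that \emph{every} primitive $\bft\in S^{(D;A,B)}(\Z_3)$ satisfies $\ev_{\alp,3}(\bft)=0$, which by the description in Section~\ref{section3} amounts to showing that at least one of $t_0,t_1$ and at least one of $t_0+At_1$, $t_0+Bt_1$ is a $3$-adic unit (so that the relevant Hilbert symbol $(q,D)_3$ equals $1$, since $3\mid D$ means $(u,D)_3=1$ for any unit $u$). As in the proof of Lemma~\ref{inert}, primitivity already forces one of $t_0,t_1$ to be a unit. If $3\mid t_1$ then $t_0$ is a unit and both $t_0+At_1$, $t_0+Bt_1$ are units (as $3\mid A$ and $3\mid B$), so $\ev_{\alp,3}(\bft)=0$. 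The remaining case is $3\nmid t_1$; by homogeneity normalise $t_1=1$. Here I would argue: the first equation $t_0=t_2^2-Dt_3^2$ together with $3\mid D$ gives $t_0\equiv t_2^2\pmod 3$; if $3\nmid t_0$ we are done since then $3\nmid t_0+Bt_1$ (as $B\equiv 0$) and also $3\nmid t_0+At_1$ unless $A\equiv -t_0$, but one of the two suffices. If $3\mid t_0$, then $t_0+Bt_1\equiv t_0\equiv 0$ and I must examine $t_0+At_1 = t_0+A$. Since $A\equiv -D\pmod 9$ and $3\mid D$, $3^2\nmid D$ (as $D$ is squarefree), we have $\nu_3(A)=1$, so $t_0+A\equiv A\not\equiv 0\pmod 3$ when $3\mid t_0$; thus $t_0+At_1$ is a unit and $\ev_{\alp,3}(\bft)=0$. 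In every case the evaluation is $0$.

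The main obstacle I anticipate is the bookkeeping in the case $3\nmid t_1$, $3\mid t_0$: one needs to be careful that the first defining equation, reduced modulo an appropriate power of $3$, is genuinely consistent with $3\mid t_0$ — i.e.\ that such primitive solutions actually exist — and, more importantly, that $\nu_3(t_0+A)=1$ exactly, using $\nu_3(A)=1$ (which relies crucially on $D$ squarefree and $3\mid D$). A subtlety is that the hypothesis is only modulo $9$, not modulo $3$, and this stronger congruence is exactly what guarantees $\nu_3(A)=1$ rather than $\nu_3(A)\ge 2$; without it the argument would fail. I would double-check that no primitive solution can have $\nu_3(t_0+A)$ odd and $\ge 3$: since $t_0+A\equiv A\pmod{3^2}$ whenever $\nu_3(t_0)\ge 2$, and $\nu_3(A)=1$, in fact $\nu_3(t_0+A)=1$ is forced, closing the argument.
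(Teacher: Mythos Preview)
Your proposal contains a genuine gap that invalidates the argument for the constancy of~$\alp$.

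The central error is the claim that ``$3\mid D$ means $(u,D)_3=1$ for any unit $u$.'' This is false: for an odd ramified prime one has $(u,D)_3=\bigl(\tfrac{u}{3}\bigr)$, which is $-1$ when $u\equiv 2\pmod 3$. The reduction you invoke (``it suffices that one of $t_0,t_1$ and one of $t_0+At_1,t_0+Bt_1$ be a unit'') is borrowed from the proof of Lemma~\ref{eval0}, but that lemma treats \emph{unramified} primes, where indeed every unit is a local norm. At the ramified prime~$3$ this shortcut is unavailable; one must check that the quotient $q$ is a norm from $\Q_3(\sqrt{D})$, which is a finer condition.

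This strategic error is compounded by a computational slip in the case $t_1=1$, $3\mid t_0$. You write ``$\nu_3(A)=1$, so $t_0+A\equiv A\not\equiv 0\pmod 3$''; but $\nu_3(A)=1$ means precisely that $A\equiv 0\pmod 3$. Since $3\mid A$ and $3\mid B$, both $t_0+A$ and $t_0+B$ are divisible by~$3$ in this case, so neither is a unit, and your argument collapses. Your last paragraph partially recovers the correct observation that $\nu_3(t_0+A)=1$ when $\nu_3(t_0)\ge 2$, but an element of $3$-adic valuation~$1$ is not a unit, and its Hilbert symbol with $D$ is not automatically~$1$.

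The paper's argument is different and uses the norm structure directly. From $t_0=t_2^2-Dt_3^2$ one deduces that $t_0$ is a norm, and (using $3\|D$) that $t_0\equiv 1\pmod 3$, or $t_0\equiv -D\pmod 9$, or $t_0\equiv 0\pmod 9$. In each case one checks that $t_0+A$ or $t_0+B$ is itself congruent to a norm modulo~$9$ (namely $1$ or $-D$), which gives $(q,D)_3=1$. The hypotheses $A\equiv -D\pmod 9$ and $B\equiv 0\pmod 9$ are exactly what is needed to make this case split work---not merely to pin down $\nu_3(A)$.
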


\begin{proof}
The existence of some point in $S^{(D;A,B)}(\Q_3)$ is clear since $(1:1:1:0:0)$ is a smooth point on the reduction of $S^{(D;A,B)}$ to $\F_{\!3}$. Hence, we need to show that $\alp$ evaluates constantly. For this, let $\bft\in S^{(D;A,B)}(\Q_3)$, and assume the $t_i$ normalised s.t.~$t_i\in \Z_3$ and one of them is a unit. If $3|t_1$, then the first equation in (\ref{eqn0}) shows that $3|t_2$ and hence by the second equation yields $3|t_0$. Since $D$ is assumed to be squarefree, this leads to all of the $t_i$ being divisible by $3$, a contradiction. Hence, we may assume without loss of generality that $t_1=1$. Now, the first equation in (\ref{eqn0}) shows that $t_0$ is a norm, and hence $t_0\equiv 1 \mmod 3$ or $t_0\equiv -D\mmod 9$ or $t_0\equiv 0 \mmod 9$. In the first case, one has $\smash{\frac{t_0+At_1}{t_1}\equiv 1 \mmod 3}$, which is a norm. In the second case, one has $\smash{\frac{t_0+Bt_1}{t_1}\equiv -D \mmod 9}$, and in the third case $\smash{\frac{t_0+At_1}{t_1}\equiv - D \mmod 9}$, which are both norms, as well. Hence $\alp$ evaluates constantly to 0 on $S^{(D;A,B)}(\Q_3)$.
\end{proof}

We can now show that the leading constant $C_0$ is indeed positive.

\begin{lem}\label{Cpos}
One has $C_0>0$.
\end{lem}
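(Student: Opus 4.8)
The plan is to show that every term in the sum \eqref{defC0} defining $C_0$ is non-negative and that at least one of them is strictly positive. Non-negativity is immediate: $\Gam\!\left(\frac34\right)$, $T$, $\phi(T'')$ and $T'$ are positive, and $G(1,T'')>0$ by Lemma \ref{lem5}. Hence $C_0\geq 0$, and it suffices to produce one vector $\bfl\in\N^r$ for which $\calH(\bfl)\neq\emptyset$, i.e.\ one pair of residue classes $(a,b)$ modulo $\prod_i p_i^{l_i}$ satisfying conditions a)--d) in the definition of $\calH(\bfl)$. By the Chinese remainder theorem this amounts to a choice at each ramified prime $p_i$ separately, and by Lemma \ref{lem1} both local solubility at $p_i$ and the set of values of $\ev_{\alp,p_i}$ on $S^{(D;A,B)}(\Q_{p_i})$ depend only on $(a,b)$ modulo $p_i^{l_i}$ as soon as $p_i^{l_i}\nmid G_j(a,b)$ for all $j$; so it is enough to find, at each $p_i$, a residue datum making $S^{(D;A,B)}$ locally soluble there with $\ev_{\alp,p_i}$ constant of a prescribed value, and afterwards to choose the $l_i$ large enough to record the divisibilities required by a) and b).

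I will arrange the local invariants so that $\ev_{\alp,q}$ is the constant $\frac12$ at exactly one ramified prime $q\mid D$ and $\ev_{\alp,p}$ is the constant $0$ at every other ramified prime. If $3\mid D$, Lemma \ref{lem6.8} realises the constant value $0$ at $p=3$ via the classes $A\equiv -D$, $B\equiv 0\pmod 9$. At a ramified prime $p\geq 11$, Lemma \ref{lem6.7} supplies two residue classes $a_0,a_1$ for $A$ (with $b=a/(a+1)$ for $B$, so that $S^{(D;A,B)}(\Q_p)\neq\emptyset$ by Proposition~5.1.a) of \cite{JS}), distinguished by whether $a_i+1$, equivalently $b_i$, is a square modulo $p$; the two choices are designed so that evaluating $\alp$ on the corresponding $\Q_p$-points (as in Lemma \ref{lem1} b)) produces the two possible constant values of $\ev_{\alp,p}$, so at such a prime either value can be prescribed. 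The finitely many remaining possibilities $p\in\{5,7\}$ are treated by the same mechanism with an explicit choice of residue classes, if necessary modulo a higher power of $p$ as in Lemma \ref{lem6.8}. Since $D>1$ with $D\equiv1\pmod 8$, there is at least one ramified prime, and if $3\mid D$ there is a ramified prime different from $3$; hence the prime $q$ carrying the single invariant $\frac12$ can always be selected, and assembling the per-prime choices by the Chinese remainder theorem yields $(a,b)$ with $p_i\nmid a,b,a-b,G(a,b)$ for all $i$ and with the prescribed global invariant pattern. Choosing $\bfl$ with each $l_i$ minimal subject to a) and b) for this $(a,b)$ --- namely $l_i=1$ where $p_i\nmid\prod_j G_j(a,b)$, and a larger value governed by $8\,\nu_{p_i}(A)$, $8\,\nu_{p_i}(B)$ or $8\,\nu_{p_i}(G(A,B))$ otherwise, as at $p=3$ --- one checks that $(a,b)$ reduces to an element of $\calH(\bfl)$.

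Consequently the corresponding summand of \eqref{defC0} is strictly positive, and therefore $C_0>0$. The real work sits in the middle step: pinning down, at a ramified prime, exactly which residue data force the local Brauer invariant to be the constant $\frac12$ (respectively $0$) while keeping a $\Q_p$-point on the fibre --- this is precisely what Lemmas \ref{lem6.7} and \ref{lem6.8} achieve, with the small primes $5$ and $7$ requiring a separate but elementary verification.
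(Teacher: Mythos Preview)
Your approach is essentially the same as the paper's: reduce to showing $\calH(\bfl)\neq\emptyset$ for some $\bfl$, then construct a suitable pair $(A,B)$ prime by prime using Lemmas~\ref{lem6.7} and~\ref{lem6.8} together with Proposition~5.1 of~\cite{JS}. Two imprecisions are worth flagging.

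First, your claim that the assembled $(a,b)$ satisfies $p_i\nmid a,b,a-b,G(a,b)$ for all~$i$ is inconsistent with your own use of Lemma~\ref{lem6.8} at $p=3$, where $B\equiv 0\pmod 9$; this is harmless (you correct for it when choosing~$\bfl$), but the sentence as written is false.

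Second, and more substantively, you assert that at $p\in\{5,7\}$ either constant value of $\ev_{\alp,p}$ can be prescribed ``by the same mechanism\ldots if necessary modulo a higher power'', without verification. In fact the mechanism of Lemma~\ref{lem6.7} (choosing $a$ a nonzero square with $a\not\equiv -1$ and $a^2+a+1\not\equiv 0$) leaves \emph{no} freedom at these primes: over $\F_5$ only $a\equiv 1$ survives, forcing $a+1$ to be a non-square, and over $\F_7$ again only $a\equiv 1$ survives, forcing $a+1$ to be a square. The paper sidesteps this by slightly different bookkeeping: rather than insisting on exactly one prime with invariant $\tfrac12$, it lets the values at the smaller primes fall where they may and then adjusts the choice at the largest prime $p_r$ (where Lemma~\ref{lem6.7} does give both options, since $p_r>9$) to make the total number of $\tfrac12$'s odd. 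The only case with no prime $>9$, namely $D=3\cdot5\cdot7$, is checked by hand: the forced values at $5$ and $7$ already give exactly one $\tfrac12$. You should either adopt this global-parity argument or actually exhibit, for $p=5$ and $p=7$, residue data (possibly modulo a higher power) realising each constant value.
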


\begin{proof}
Recall the definition of $C_0$ in equation (\ref{defC0}). By Lemma \ref{lem5}, we see that each of the $G(1,T'')>0$, such that the problem reduces to showing that there is some $\bfl\in \N^r$ such that $\calH(\bfl)\neq \emptyset$. For this, we construct a tuple of integers $(A,B)$ satisfying the following properties:\vspace{0.1cm}\\
i) If $p_1=3$, then $A\equiv -D$ modulo $9$ and $B\equiv 0 $ modulo $9$.\vspace{0.1cm}\\
ii) For $p_i>3$, the residue class $\bar A= (A\mmod p_i)$ is a square, different from $0,-1$, and such that $\bar A^2+\bar A +1\neq 0$. Furthermore $\smash{B\equiv -\frac{A}{A+1}\mmod p_i}$.\vspace{0.1cm}\\
iii) If there is an even number of non-squares among $(A\mmod p_i)+1$ for primes $p_i>3$ and $i<r$, then $(A\mmod p_r)+1$ is a non-square, and if there is an odd number of non-squares among $(A\mmod p_i)+1$ for primes $p_i>3$ and $i<r$, then $(A\mmod p_r)+1$ is a square.\vspace{0.1cm}\\
iv) All of the polynomials $G_j(A,B)$ as defined in Lemma \ref{lem1} are non-zero.\vspace{0.1cm}\\
By Lemma \ref{lem6.7}, such a choice for $(A,B)$ is possible. This is clear for $D\neq 3\cdot 5\cdot 7$. For $D=3\cdot 5\cdot 7$ we note that condition ii) forces $(A\mmod 5)=1$ and hence $(A\mmod 5)+1$ is a non-square. Then, over the field $\F_{\! 7}$, there is an element $a_0\neq 0,-1$ with $a_0^2+a_0+1\neq 0$, and such that $a_0+1$ is a square, take e.g.~$a_0=1$.\par
If $3|D$ then, by Lemma \ref{lem6.8}, condition~i) implies that $S^{(D;A,B)}(\Q_3)\neq \emptyset$. Furthermore, the Brauer class $\alp$ evaluates constantly to zero on $S^{(D;A,B)}(\Q_3)$. Since none of the $G_j(A,B)$ vanish, this implies, together with Proposition 5.1 in \cite{JS}, that there is some $\bfl\in \N^r$ such that the reduction of $(A,B)$ modulo $\smash{\prod_{i=1}^r p_i^{l_i}}$ is contained in $\calH(\bfl)$. Hence we have $\calH (\bfl)\neq \emptyset$, which completes the proof of the lemma.
\end{proof}


\frenchspacing

\setlength\parindent{0mm}
\end{document}